\numberwithin{equation}{section}
\newtheorem{thm}{Theorem}[section]
\newtheorem{prop}[thm]{Proposition}
\newtheorem{lem}[thm]{Lemma}
\newtheorem{cor}[thm]{Corollary}
\newtheorem{defn}[thm]{Definition}
\newtheorem{rem}[thm]{Remark}
\newtheorem{ex}[thm]{Example}
\DeclareMathOperator\Aut{Aut}
\DeclareMathOperator\Ad{Ad}
\DeclareMathOperator\dd{d}
\DeclareMathOperator\id{id}
\renewcommand{\tilde}{\widetilde}
\renewcommand{\bar}{\overline}
\newcommand{\oline}{\overline}
\newcommand{\eset}{\emptyset} 
\renewcommand{\:}{\colon} 
\newcommand{\eps}{\varepsilon} 
\newcommand{\subeq}{\subseteq} 
\newcommand{\bone}{\mathbf 1}
\newcommand{\pmat}[1]{\begin{pmatrix} #1 \end{pmatrix}}
\def\g{{\mathfrak g}}
\def\q{{\mathfrak q}}
\def\fq{{\mathfrak q}}
\def\h{{\mathfrak h}}
\def\p{{\mathfrak p}}
\def\gl{{\mathfrak{gl}}}
\DeclareMathOperator\Ev{ev}
\DeclareMathOperator\diag{diag}
\DeclareMathOperator\Hom{Hom}
\DeclareMathOperator\tr{tr}
\DeclareMathOperator\deter{det}
\DeclareMathOperator\spanh{span}
\def\Z{\mathbb{Z}}
\def\V{\mathbb{V}}
\def\bV{\mathbb{V}}
\def\L{\mathcal L}
\def\D{\mathcal D}
\def\K{\mathcal K}
\def\A{\mathcal A}
\def\cA{\mathcal A}
\def\M{\mathcal M}
\def\cM{\mathcal M}
\def\H{\mathcal H}
\def\cH{\mathcal H}
\def\cK{\mathcal K}
\def\cE{\mathcal E}
\def\O{\mathcal O}
\def\cO{\mathcal O}
\def\B{\mathcal B}
\def\Fo{\mathcal F}
\def\cR{\mathcal R}
\DeclareMathOperator\Gr{Gr}
\DeclareMathOperator\GL{GL}
\DeclareMathOperator\U{U}
\DeclareMathOperator\Ort{O}
\DeclareMathOperator\res{res}
\def\C{\mathbb{C}}
\def\R{\mathbb{R}}
\def\N{\mathbb{N}}
\def\T{\mathbb{T}}
\def\P{\mathbb{P}}
\title[Multiplicity freeness in sections of 
holomorphic Hilbert bundles]{Multiplicity freeness of unitary representations in sections of 
holomorphic Hilbert bundles }
\date{\today}
\author{Mart\'in Miglioli}
\email[Mart\'in Miglioli]{martin.miglioli@gmail.com}
\address[Mart\'in Miglioli]{IAM-CONICET. Saavedra 15, Piso 3, (1083) Buenos Aires, Argentina}
\thanks{The first author was supported by IAM-CONICET, grants PIP 2010-0757 (CONICET), 2010-2478 (ANPCyT) and a DAAD grant for short term visit}
\author{Karl-Hermann Neeb} 
\email[Karl-Hermann Neeb]{neeb@math.fau.de}
\address[Karl-Hermann Neeb]{Department Mathematik, 
University Erlangen-Nuremberg, Cauerstrasse 11, 
91058 Erlangen, Germany}
\thanks{The second author was supported in part by DFG-grant NE 413/9-1.}
\begin{document}

\begin{abstract}
We  prove several results asserting that the action of a 
Banach--Lie group on Hilbert spaces of holomorphic sections of a 
holomorphic Hilbert space bundle over a complex Banach manifold is multiplicity free. 
These results require the existence of compatible antiholomorphic bundle maps 
and certain multiplicity freeness assumptions for stabilizer groups. 
For the group action on the base, the notion of an $(S,\sigma)$-weakly visible 
action (generalizing T.~Koboyashi's visible actions) 
provides an effective way to express the assumptions in an economical fashion. 
In particular, we derive a version for group actions on homogeneous bundles 
for larger groups. We illustrate these general results by 
several examples related to operator groups and von Neumann algebras. 

\medskip

\noindent \textbf{Keywords.} unitary representation, infinite dimensional Lie group, holomorphic Hilbert bundle, multiplicity-free representation, reproducing kernel, visible action
\end{abstract}

\maketitle
\tableofcontents

\section{Introduction}

A unitary representation $(\pi,\cH)$ of a group $G$ 
on a complex Hilbert space $\cH$ is called 
 \textit{multiplicity-free} if its commutant, 
the von Neumann algebra $\pi(G)'$ of continuous $G$-inter\-twining operators,  
is commutative. 
Multiplicity-free representations are special in the sense that one may 
expect to find natural decompositions into irreducible ones based 
on direct integrals over the spectrum of the commutant. 
We refer to \cite{kob2} and the reference therein for a survey of multiplicity 
free theorems and its applications in the context of finite dimensional Lie groups. 

The main results of this paper consist in 
propagation theorems for the multiplicity-free property (MFP) 
from the representation of a stabilizer group in a fiber Hilbert space 
to Hilbert spaces of holomorphic sections of  holomorphic Hilbert bundles. 
Our results extend those of T.~Kobayashi concerning 
finite-dimensional bundles \cite{kob}  
to Hilbert bundles over Banach manifolds. More specifically, 
the fibers are complex Hilbert spaces
 and the groups act as fiberwise isometric holomorphic 
bundle automorphisms. We apply these propagation theorems to branching problems 
of representations of infinite dimensional groups constructed by 
holomorphic induction. 
Here an essential part is that the isotropy representations 
are not finite dimensional, hence in general not direct sums of irreducible 
representations. As we shall see, 
this difficulty can be overcome by working systematically 
with the commutant as a von Neumann algebra. 

A variant of the propagation theorem is formalized as in 
\cite{kob} in terms of so-called visible actions. 
The $G$-action on $M$ is called $(S,\sigma^M)$-weakly visible 
if $S\subseteq M$ is a subset for which the closure of $G.S$ has interior points 
and $\sigma^M$ is an antiholomorphic diffeomorphism 
of $M$ preserving all $G$-orbits through~$S$ and leaving $S$ invariant.
If $\sigma^M$ lifts to an anti-holomorphic bundle endomorphism 
$\sigma^{\V}$ which is compatible with the $G$-action with respect to an 
automorphism $\sigma^G$ of $G$ satisfying $\sigma^\bV(g.v) = \sigma^G(g).\sigma^\bV$, 
then one can formulate a variant of the propagation 
theorem (Theorem~\ref{multfreevis}) asserting the multiplicity-freeness 
of the $G$-representation on $\cH \subseteq \Gamma(\bV)$ if, for every 
$s \in S$, the antiunitary operator $\sigma^\bV_s$ commutes on 
$\bV_s$ with the hermitian part of the commutant of the $G_s$-action. 
A third form of the propagation theorem is obtained in the setting where 
the bundle $\V=G\times_{\rho,H} V$ 
is associated to a homogeneous $H$-principal bundle $G\to M=G/H$ by a norm 
continuous unitary representation $(\rho,V)$ of~$H$. 
We plan to use this formulation for 
concrete branching problems in the representation theory of Banach--Lie groups. 

In the infinite dimensional context there are no general results on the existence of 
solutions of $\bar{\partial}$-equations that can be used to verify integrability 
of complex structures on Banach manifolds and in particular on vector bundles. 
Here  \cite{neeb13a} 
provides effective methods to treat unitary representations of Banach--Lie groups in spaces of holomorphic sections of homogeneous Hilbert bundles. 
For a real homogeneous Banach vector bundle $\V=G\times_H V$ 
over $G/H$ associated to a norm continuous representation of~$H$, 
compatible complex structures 
are obtained by extensions $\beta:\q\to \gl (V)$ 
of the differential $\dd\rho\colon\h\to \gl(V)$ to a representation of the complex 
subalgebra $\q\subseteq \g_{\C}$  specifying the complex structure 
on $M$ by $T_{1H}M\simeq\g_{\C}/\q$.
 
Another particularity of the infinite dimensional context is that in general self-adjoint operators are not conjugate to operators in a given maximal abelian subalgebra. This is 
well known for the algebra of bounded operators acting on a Hilbert space, see \cite{bw} for the case of Hilbert-Schmidt operators and the example after 
\cite{ak06}*{Thm.~5.4} for operators in a finite von Neumann factor. In the fundamental examples the group action is derived from the two-sided action of 
the unitary group $\U(\cM)$ on the algebra $\cM$ endowed with 
a scalar product derived from its trace, and the slice 
$S$ is derived from the hermitian operators in a maximal abelian subalgebra.  That 
the closure of $G.S$ has interior points comes 
from an approximate Cartan decomposition which follows from diagonalizabity on a dense subset. 
Also, since the isotropy representations in the infinite dimensional context may not be discretely decomposable as in the finite dimensional case treated in \cite{kob} we do not impose this condition in the propagation theorems. 

The structure of this paper is as follows. 
Section \ref{prel} contains some preliminary results on equivariant holomorphic Hilbert bundles and representations in Hilbert spaces of holomorphic sections.
In Section \ref{compgeomultfree} we start with our 
First Propagation Theorem~\ref{multfree1}, which asserts the multiplicity freeness 
of a unitary representation of a group $G$ in a Hilbert space 
of holomorphic sections, provided there exists an antiholomorphic 
bundle map satisfying certain compatibility conditions formulated in 
terms of stabilizer representations in points $m$ belonging to a subset 
$D\subeq M$.

For the sake of easier application of this result in the infinite dimensional 
context, we slightly extend T.~Kobayashi's notion of a visible action (\cite{kob2}). 
We call the action of the group $G$ on a complex manifold $M$ by holomorphic maps 
$(S,\sigma^M)$-weakly visible if $S \subeq G$ is a subset for which 
the closure of $G.S$ has interior points and $\sigma^M \: M \to M$ 
is an antiholomorphic diffeomorphism fixing $S$ pointwise and preserving the 
$G$-orbits through~$S$. For a visible action one requires in addition 
that $G.S$ is an open 
subset of $M$, but this is not satisfied in many interesting infinite 
dimensional situations, where the weak visibility can be verified. 
This leads us to our Second Propagation Theorem~\ref{multfreevis},
where the assumptions are formulated in terms of a weakly visible 
action. 

We then turn to the special case where 
the bundle $\bV$ is a homogeneous bundle over a homogeneous space 
$G/H$ of a Banach--Lie group~$G$. 
In Section \ref{assocbundles} we discuss 
$G$-invariant complex structures on such bundles and antiholomorphic 
isomorphisms. This is used in Section~\ref{propagassocbund} 
to obtain a propagation theorem for the multiplicity freeness 
of the representation of a subgroup $K \subeq G$ on Hilbert spaces 
of holomorphic sections of~$\bV$ (Theorem~\ref{multfreegroup}). 

In Section \ref{exampleswv} we eventually discuss various concrete situations 
in the Banach context, where the results of this paper apply naturally. 
In particular we exhibit several kinds of 
weakly visible actions on infinite dimensional spaces 
and  state some corresponding propagation theorems. 
A thorough investigation of these particular representations and 
concrete branching results are the topic of ongoing research.

\section{Preliminaries}\label{prel}

\subsection{Equivariant holomorphic Hilbert bundles}

Let $q:\V=\coprod_{m\in M}\V_m\to M$ be a holomorphic vector bundle over a connected 
complex Banach manifold $M$ whose fibers are complex Hilbert spaces. 
We write $\Gamma (\V)$ for the space of holomorphic sections of $\V\to M$.
We further assume that $G$ is a group (at this point no topology on $G$ is assumed) 
which acts on $\V$ by isometric holomorphic bundle automorphisms 
$(\gamma_g)_{g \in G}$. We denote the action of $G$ on 
the base space simply by $m\mapsto g.m$ for $g\in G$. 
In particular, we obtain for each $m \in M$
 a unitary representation 
\[ \rho_m:G_m\to \U(\V_m) \] 
of the isotropy subgroup $G_m:=\{g\in G:g.m=m\}$ on the fiber $\V_m$. 
Finally, the action of $G$ on the bundle $\V\to M$ gives rise to a representation 
$\delta$ of $G$ on $\Gamma (\V)$ by 
\begin{align}
(\delta_gs)(m):=\gamma_{g}(s(g^{-1}.m)) \quad\mbox{for}\quad g\in G\quad\mbox{and}\quad s\in\Gamma (\V).
\label{actionbundle}
\end{align}

\subsection{Reproducing kernels for Hilbert bundles}\label{repkern}

Let $q:\V\to M$ be a holomorphic Hilbert bundle on the complex manifold $M$. A Hilbert subspace $\H\subseteq \Gamma (\V)$ is said to have  \textit{continuous point evaluations} if all the evaluation maps 
$$\Ev_m:\H\to\V_m, \, s\mapsto s(m)$$
are continuous and the function $m\mapsto \|\Ev_m\|_{B(\H,\V_m)}$ is locally bounded. Then 
$$Q(m,n):=(\Ev_m)(\Ev_n)^*\in B(\V_n,\V_m),$$
defines a holomorphic section of the operator bundle 
\[ B(\V):=\coprod_{(m,n)\in M\times M^{\rm op}}B(\V_n,\V_m)\to M\times M^{\rm op},\]
where $ M^{\rm op}$ is the complex manifold $M$ endowed with the opposite complex structure. This section is the \textit{reproducing kernel} of $\H$. 

\begin{defn}{\rm 
We call a Hilbert subspace $\H\subseteq\Gamma (\V)$ with continuous point evaluations \textit{$G$-invariant}, if $\H$ is invariant under the action defined by (\ref{actionbundle}) and the so obtained representation $\pi$ of $G$ on $\H$ is unitary. In this case we say that $(\pi,\H)$ is \textit{realized} in $\Gamma (\V)$.} 
\end{defn}

\begin{lem}\label{lemeqkern}
\begin{enumerate}
\item For $j =1,2$, let $Q_j$ be the reproducing kernels of the Hilbert spaces 
$\H_j\subseteq\Gamma(\V)$ with inner products $\langle\cdot, \cdot \rangle_{\H_j}$. 
If $Q_1=Q_2$, then the subspaces $\H_1$ and $\H_2$ coincide and the inner products 
$\langle\cdot, \cdot\rangle_{\H_1}$ and $\langle\cdot, \cdot\rangle_{\H_2}$ are the same.
\item If $M$ is connected and $Q_1(m,m)=Q_2(m,m)$ for all $m$ in a subset $D$ which is dense in an open subset of $M$, then $Q_1=Q_2$.
\end{enumerate}
\end{lem}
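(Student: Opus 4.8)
The plan is to deduce (1) from the standard reconstruction of a reproducing kernel Hilbert space from its kernel, and (2) from the fact that a holomorphic section of $\B(\V)\to M\times M^{\rm op}$ which vanishes on the diagonal vanishes identically, together with the identity theorem for holomorphic sections over connected manifolds.

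For (1): for $n\in M$ and $v\in\V_n$ the section $m\mapsto Q_j(m,n)v$ equals $(\Ev_n)^*v\in\H_j$, and the reproducing identity $\langle s,(\Ev_n)^*v\rangle_{\H_j}=\langle s(n),v\rangle_{\V_n}$ holds for every $s\in\H_j$. Hence the linear span $\H_j^0$ of all these sections is dense in $\H_j$ (a section orthogonal to all of them vanishes at every point of $M$), and on $\H_j^0$ the inner product is given by $\langle(\Ev_n)^*v,(\Ev_{n'})^*v'\rangle_{\H_j}=\langle Q_j(n',n)v,v'\rangle_{\V_{n'}}$. If $Q_1=Q_2$, then the sections $m\mapsto Q_1(m,n)v$ and $m\mapsto Q_2(m,n)v$ coincide, so $\H_1^0=\H_2^0$ as subspaces of $\Gamma(\V)$ and the two inner products agree there. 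Given $s\in\H_1$, choose $s_k\in\H_1^0$ with $s_k\to s$ in $\H_1$; then $(s_k)$ is Cauchy for the common norm, hence converges in $\H_2$ to some $t$, and continuity of the point evaluations on $\H_1$ and $\H_2$ gives $s(m)=\lim_k s_k(m)=t(m)$ for all $m$, so $s=t\in\H_2$. By symmetry $\H_1=\H_2$, and the inner products agree by polarization.

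For (2): the diagonal restriction $m\mapsto Q_j(m,m)$ of a holomorphic section is continuous, so the equality $Q_1(m,m)=Q_2(m,m)$ propagates from $D$ to the open set $U$ in which $D$ is dense. Set $R:=Q_1-Q_2$, a holomorphic section of $\B(\V)\to M\times M^{\rm op}$ vanishing on $\{(m,m):m\in U\}$. Fix $m_0\in U$, a neighbourhood $W\subeq U$ of $m_0$ and a holomorphic trivialization $\V|_W\cong W\times E$, which induces a trivialization of $\B(\V)$ over $W\times W$; for $v,w\in E$ the scalar function $f(m,n):=\langle R(m,n)v,w\rangle_E$ on $W\times W$ is holomorphic in $m$, antiholomorphic in $n$, and vanishes for $m=n$. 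Expanding $f$ near $(m_0,m_0)$ into continuous polynomial terms $P_{j,k}$, homogeneous of degree $j$ in the first variable (holomorphically) and of degree $k$ in the second (antiholomorphically), the vanishing of $m\mapsto f(m,m)$ near $m_0$ forces $P_{j,k}=0$ for all $j,k$: rescaling the increment separates the bidegrees, and each $P_{j,k}$ is then recovered from its diagonal values by polarization. Hence $f$, and therefore $R$, vanishes in a neighbourhood of $(m_0,m_0)$; since this is a nonempty open subset of the connected complex manifold $M\times M^{\rm op}$, the identity theorem for holomorphic sections yields $R\equiv 0$, i.e.\ $Q_1=Q_2$.

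The step I expect to be the main obstacle is the passage in (2) from the vanishing of $f$ on the diagonal to its vanishing near $(m_0,m_0)$: this is precisely where the opposite complex structure on the second factor is used, and in the Banach setting one has to invoke the local series expansion of holomorphic maps between Banach spaces --- or reduce to finite-dimensional complex submanifolds --- in order to run the scaling-and-polarization argument. The remaining ingredients (the reproducing identity, density of $\H_j^0$, and the identity theorem for holomorphic sections over connected Banach manifolds) are routine.
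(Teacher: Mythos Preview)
Your argument is correct. The paper, however, takes a different route: rather than working directly with the operator-valued kernel $Q$ and the diagonal of $M\times M^{\rm op}$, it linearizes the whole situation by embedding $\Gamma(\V)$ into the space $\O(\V^*)$ of scalar-valued holomorphic functions on the total space of the dual bundle via $\Psi(s)(\alpha_m)=\alpha_m(s(m))$. One then checks that the scalar reproducing kernel of $\Psi(\H_j)$ is $K_j(\alpha_m,\beta_n)=\langle Q_j(m,n)w_n,v_m\rangle_{\V_m}$ (for $\alpha_m=\langle\cdot,v_m\rangle_{\V_m}$, $\beta_n=\langle\cdot,w_n\rangle_{\V_n}$), so that $Q_1=Q_2$ iff $K_1=K_2$, and $Q_1=Q_2$ on $\diag(D)$ iff $K_1=K_2$ on the set $\coprod_{m\in D}\V_m^*$, which is dense in an open subset of the connected complex manifold $\V^*$. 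Both (1) and (2) then reduce to the classical statements for scalar reproducing kernel Hilbert spaces of holomorphic functions, and the paper simply cites \cite{neeb00}*{Lemma~I.1.5} and \cite{neeb00}*{Lemma~A.III.8}. Your direct approach is more self-contained and makes the role of the totally real diagonal in $M\times M^{\rm op}$ explicit; the paper's reduction has the advantage that the delicate step you flag --- the Banach-space polarization/identity-theorem argument on the diagonal --- is entirely absorbed into an existing reference, at the cost of introducing the auxiliary space~$\V^*$.
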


\begin{proof}
We can represent holomorphic sections of the bundle $q:\V\to M$ by holomorphic functions on the total space of the dual bundle $\V^*$ which are linear on each fiber via the $G$-equivariant embedding
\[ \Psi:\Gamma(\V)\to\O(\V^*),\quad \Psi(s)(\alpha_m)=\alpha_m(s(m))\quad\mbox{for}\; s\in \Gamma(\V),\; \alpha_m\in \V_m^*,\] 
where $\O(\V^*)$ is the space of holomorphic functions on $\V^*$, see 
\cite{neeb13a}*{Remark 2.2}. For a reproducing kernel Hilbert space $\H\subseteq \Gamma(\V)$ with reproducing kernel $Q$ we obtain a reproducing kernel Hilbert space of holomorphic functions $\Psi(\H)\subseteq \O(\V^*)$ with reproducing kernel 
$$K(\alpha_m,\beta_n)=\Ev_{\alpha_m}\Ev_{\beta_n}^*\in B(\C,\C)\simeq \C\quad\mbox{for}\quad\alpha_m\in\V_m^*,\beta_n\in\V_n^*.$$
Since
\[  \Ev_{\alpha_m} \circ \Psi = \alpha_m \circ \Ev_m
\quad\mbox{for}\quad \alpha_m\in \V_m^*,\]
we obtain for $f:=\Psi(s)\in \Psi(\H)$,  $s\in \H$, the relation 
$$\Ev_{\alpha_m}(f)=\alpha_m(\Ev_m(\Psi^{-1}(f))),$$
so that 
\begin{align*}
K(\alpha_m,\beta_n)&=\Ev_{\alpha_m}\Ev_{\beta_n}^*
=(\alpha_m\circ\Ev_m\circ\Psi^{-1})(\beta_n\circ\Ev_n\circ\Psi^{-1})^*\\
&=\alpha_m\Ev_m\Ev_n^*\beta_n^*=\alpha_mQ(m,n)\beta_n^*.
\end{align*}
Therefore, if $\alpha_m=\langle\cdot,v_m\rangle_{\V_m}$ for $v_m\in\V_m$ and $\beta_n=\langle\cdot,w_n\rangle_{\V_n}$ for $w_n\in\V_n$, then
$$K(\alpha_m,\beta_n)=\langle Q(m,n)w_n,v_m\rangle_{\V_m}.$$
If $M$ is connected, then the total space $\bV^*$ is also connected 
and if $D$ is dense in an open subset of $M$, then $\coprod_{m\in D}\V^*_m$ is dense in a open subset of $\V^*$. The first assertion 
now follows from general facts about reproducing kernel spaces 
\cite{neeb00}*{Lemma~I.1.5} and the second by the the discussion in 
\cite{neeb00}*{Lemma~A.III.8}. 
\end{proof}


\begin{lem}\label{equivkernel}
If $(\pi,\H)$ is realized in $\Gamma(\V)$, then the kernel $Q$ of $\H$ satisfies 
\[ Q(g.m,g.n)=(\gamma_g|_{\V_m})Q(m,n)(\gamma_g|_{\V_n})^{-1} 
\quad \mbox{ for } \quad m,n\in M, g\in G.\]
In particular the hermitian operators 
$Q(m,m)$ commute with $\rho_m(G_m))$ for every $m\in M$. 
\end{lem}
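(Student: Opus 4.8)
The plan is to exploit the $\pi$-equivariance of the evaluation maps $\Ev_m\colon\H\to\V_m$ and then pass to adjoints, using that both $\pi(g)$ and the fiberwise maps $\gamma_g|_{\V_m}$ are unitary. First I would record the basic intertwining relation: for $s\in\H$ and $g\in G$, formula \eqref{actionbundle} gives $(\pi(g)s)(m)=\gamma_g(s(g^{-1}.m))$, that is,
\[ \Ev_m\circ\pi(g)=(\gamma_g|_{\V_{g^{-1}.m}})\circ\Ev_{g^{-1}.m} \]
as bounded operators $\H\to\V_m$ (boundedness of $\Ev_m$ is part of the hypothesis of continuous point evaluations, and $\pi(g)$ is bounded since $\pi$ is unitary). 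Replacing $m$ by $g.m$ yields $\Ev_{g.m}=(\gamma_g|_{\V_m})\circ\Ev_m\circ\pi(g)^{-1}$.

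Next I would take adjoints. Since $\pi$ is unitary, $\pi(g)^{-*}=\pi(g)$; and since $G$ acts by \emph{isometric} bundle automorphisms, the fiber map $\gamma_g|_{\V_m}\colon\V_m\to\V_{g.m}$ is unitary, so $(\gamma_g|_{\V_m})^{*}=(\gamma_g|_{\V_m})^{-1}$. Hence
\[ \Ev_{g.m}^{*}=\pi(g)\circ\Ev_m^{*}\circ(\gamma_g|_{\V_m})^{-1}. \]
Substituting the two expressions into $Q(g.m,g.n)=\Ev_{g.m}\Ev_{g.n}^{*}$, the factors $\pi(g)^{-1}$ and $\pi(g)$ cancel, leaving $(\gamma_g|_{\V_m})\,\Ev_m\Ev_n^{*}\,(\gamma_g|_{\V_n})^{-1}=(\gamma_g|_{\V_m})\,Q(m,n)\,(\gamma_g|_{\V_n})^{-1}$, which is the asserted identity.

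For the final statement, I would specialize to $m=n$ and $g\in G_m$, so that $g.m=m$ and $\gamma_g|_{\V_m}=\rho_m(g)$. The identity then reads $Q(m,m)=\rho_m(g)\,Q(m,m)\,\rho_m(g)^{-1}$, i.e.\ $Q(m,m)$ commutes with every $\rho_m(g)$, hence with $\rho_m(G_m)$; moreover $Q(m,m)=\Ev_m\Ev_m^{*}\ge 0$ is in particular hermitian. There is no serious obstacle here: the computation is short, and the only point that requires a little care is the bookkeeping with adjoints, where one must invoke unitarity of $\pi(g)$ and of the $\gamma_g|_{\V_m}$ rather than mere invertibility.
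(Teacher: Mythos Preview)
Your argument is correct and matches the paper's proof essentially line for line: both derive $\Ev_{g.m}=(\gamma_g|_{\V_m})\circ\Ev_m\circ\pi(g)^{-1}$ from \eqref{actionbundle}, take adjoints using unitarity, and cancel the $\pi(g)^{\pm 1}$ factors in $\Ev_{g.m}\Ev_{g.n}^*$. Your explicit specialization to $g\in G_m$ for the last sentence is a welcome clarification that the paper leaves implicit.
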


\begin{proof}
Since 
$$(\pi(g)^{-1}s)(m)=\gamma_{g^{-1}}(s(g.m))\quad\mbox{for}\quad s\in \H,g\in G,m\in M$$ 
we have 
$$\Ev_{g.m}=\gamma_g|_{\V_m}\circ \Ev_m\circ\pi(g)^{-1}.$$
Therefore  
\begin{align*}
Q(g.m,g.n)&=\Ev_{g.m}\Ev_{g.n}^*
=((\gamma_g|_{\V_m}) \Ev_m\pi(g)^{-1})(\pi(g)\Ev_n^*(\gamma_g|_{\V_n})^{-1})\\
&=(\gamma_g|_{\V_m})Q(m,n)(\gamma_g|_{\V_n})^{-1}.
\qedhere
\end{align*}
\end{proof}

\section{Complex geometry and multiplicity-free property}\label{compgeomultfree}

In this section we prove the propagation of the multiplicity-freeness 
from the isotropy representations to the representation 
on Hilbert spaces of holomorphic sections of equivariant holomorphic vector bundles.  
This result is proved through the construction of an 
anti-unitary operator $J$ on the representation space which implements a conjugation in the commutant of the representation, from which the multiplicity-free property of the representation follows. Then we introduce the concept of $(S,\sigma^M)$-weakly visible action to prove a second version of the Propagation Theorem where the conditions are imposed on a slice of the group action on a dense in an open subset of the base space.

\subsection{Propagation of the multiplicity free property from fibers to sections}

The following lemma captures the key idea that is mostly used to 
show that a commutant is commutative. 
We refer to \cite{FT99} for one of the first systematic applications of this idea. 

\begin{lem}\label{conjcommute}
Let $\M\subseteq B(\H)$ be a von Neumann algebra. The following conditions are equivalent:
\begin{itemize}
\item[\rm(a)] $\M$ is commutative.
\item[\rm(b)] There is an anti-unitary $J$ on $\H$ such that $JAJ^{-1}=A^*$ for $A\in \M$.
\item[\rm(c)] There is an anti-unitary $J$ on $\H$ which commutes with the 
self-adjoint part $\M_h$ of $\M$, i.e., $J\in(\M_h)'^{\R}$, where $(\cdot)'^{\R}$ denotes the real linear commutant.
\item[\rm(d)] There is an anti-unitary $J$ which commutes with the positive invertible operators in $\M$.
\end{itemize}
\end{lem}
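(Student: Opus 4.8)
The plan is to establish the cycle of implications (a) $\Rightarrow$ (b) $\Rightarrow$ (c) $\Rightarrow$ (d) $\Rightarrow$ (a), since each step is then a reasonably short argument. For (a) $\Rightarrow$ (b): if $\M$ is commutative, then $\M$ is $*$-isomorphic to some $L^\infty(X,\mu)$ with a faithful normal state (assuming, as one may, that $\H$ is separable — or more generally reducing to a cyclic component and taking a suitable direct sum / integral), and one realises $\H$ as a direct integral or as a subspace of $L^2$ on which $\M$ acts by multiplication; complex conjugation of functions then gives an anti-unitary $J$ with $J A J^{-1} = \bar A = A^*$ for multiplication operators. Alternatively, and perhaps more cleanly in the von Neumann setting, one uses that a commutative von Neumann algebra $\M$ with separating and cyclic vector $\Omega$ carries the Tomita conjugation $J$ from modular theory, and for abelian $\M$ the modular operator is trivial so $J A J = A^*$ already on the dense domain; density and boundedness extend this to all of $\M$. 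I would present whichever of these the rest of the paper's conventions make shortest, flagging that separability or a maximality reduction may be invoked.

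The implication (b) $\Rightarrow$ (c) is immediate: if $J A J^{-1} = A^*$ for all $A \in \M$, then for $A = A^* \in \M_h$ we get $J A J^{-1} = A$, i.e.\ $J A = A J$, so $J \in (\M_h)'^{\R}$. For (c) $\Rightarrow$ (d): the positive invertible elements of $\M$ lie in $\M_h$, so an anti-unitary commuting with all of $\M_h$ in particular commutes with them. (One should note these are genuine inclusions of sets, not of algebras, so no linearity issue arises.) The only mildly substantive direction among these three is the reverse passage, and that is exactly (d) $\Rightarrow$ (a): here I would use that every self-adjoint $A \in \M_h$ with $\|A\| \le 1$ can be written as $A = B - C$ where $B = \tfrac12(\bone + A)$ and (after a further shift) the operators involved are positive and invertible — more carefully, $\bone + \tfrac12 + A$ and $\bone + \tfrac12$ type combinations — so that commuting with all positive invertible elements of $\M$ forces $J$ to commute with $\M_h$, hence (rescaling) with all of $\M_h$, hence $J$ implements $A \mapsto A^*$ on $\M_h$ and by complex linearity-conjugation on all of $\M$. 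Then for $A, B \in \M$ one computes, using that $J$ is anti-multiplicative-reversing in the sense $J(AB)J^{-1} = (AB)^* = B^* A^* = (J B J^{-1})(J A J^{-1})$, that conjugation by $J$ is an anti-automorphism; composing the relation $J A J^{-1} = A^*$ with the standard anti-automorphism $A \mapsto A^*$ shows $\M$ admits an automorphism that is simultaneously an anti-automorphism fixing $\M_h$ pointwise, which forces $AB = BA$ for all $A,B \in \M_h$ and then for all of $\M$ by linearity. Concretely: $J(AB)J^{-1} = (AB)^*$ and also $= J A J^{-1} J B J^{-1} = A^* B^*$ when $A, B$ are self-adjoint fixed by conjugation — wait, that gives $(AB)^* = B^*A^* = BA = A^*B^* = AB$ using $J A J^{-1}=A$, $JBJ^{-1}=B$ — hence $AB = BA$ on $\M_h$, so $\M$ is commutative.

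The main obstacle is squarely the construction in (a) $\Rightarrow$ (b): producing a single anti-unitary $J$ on the given Hilbert space $\H$ (not merely on some abstract $L^2$-model) that conjugates \emph{all} of $\M$. The honest route is through a direct-integral / maximal-abelian decomposition of $\H$ under $\M$, or through Tomita–Takesaki theory applied after choosing a cyclic and separating vector for $\M$ acting on a suitable cyclic subspace and then assembling — and one must be a little careful that $\M$ need not have a cyclic vector on all of $\H$, so a decomposition of $\H$ into a (possibly uncountable) direct sum of $\M$-cyclic subspaces, with a compatible choice of conjugations, is needed. Everything else is bookkeeping; I would isolate (a) $\Rightarrow$ (b) as the one place where a genuine structural input (spectral theorem / Tomita theory / disintegration) is unavoidable, and cite \cite{FT99} or a standard reference for the precise form used.
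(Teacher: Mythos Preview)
Your proposal is correct and follows essentially the same route as the paper: the paper also constructs $J$ in (a)$\Rightarrow$(b) by decomposing $\cH$ into $\cM$-cyclic pieces $\cH_i \cong L^2(X_i,\mu_i)$ and taking pointwise complex conjugation (no Tomita theory needed), proves (d)$\Rightarrow$(c) by the shift $A\mapsto A+c\,\id$ you describe, and deduces commutativity from (b) via $AB=J^{-1}(AB)^*J=J^{-1}B^*J\cdot J^{-1}A^*J=BA$. The only difference is organizational: the paper proves the equivalences (b)$\Leftrightarrow$(c) and (c)$\Leftrightarrow$(d) separately rather than folding them into a single (d)$\Rightarrow$(a) chain, which makes the write-up cleaner.
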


\begin{proof} (a) $\Rightarrow$ (b): 
We decompose $\H$ into cyclic representations of $\M$. 
Hence $\H=\bigoplus_{i\in I}\H_i$ where $\H_i\cong 
L^2(X_i,\mu_i)$ for  compact spaces $X_i$ and regular 
probability measures $\mu_i$ on $X_i$ 
and $\M|_{\H_i}= L^{\infty}(X_i,\mu_i)$ acts as multiplication operators on $\H_i=L^2(X_i,\mu_i)$, see \cite{fa}*{Thm. 11.32}. We define $J(\oplus_{i\in I}f_i)=\oplus_{i\in I}\overline{f_i}$ and the assertion follows since $\M_h|_{\H_i}$ are real valued functions on $X_i$.

(b) $\Leftrightarrow$ (c): Since the map 
$\M \to \M, A\mapsto JAJ^{-1}$ is 
anti-linear, it coincides with the antilinear map 
$A \mapsto A^*$ if and only if it does on the subspace 
$\M_h$ of hermitian elements. 

(b) $\Rightarrow$ (a): For $A,B\in \M$ we have
$$AB=J^{-1}(AB)^*J=J^{-1}B^*JJ^{-1}A^*J=BA,$$
so that $\M$ is commutative.

(c) $\Rightarrow$ (d):  This is trivial.

(d) $\Rightarrow$ (c): Assume that $A\in \M_h$ and choose $c\in\R$ such that $B=A+c\id$ is positive and invertible. Since $B=J^{-1}BJ$ we obtain 
\begin{align*}
A&=B-c\id=J^{-1}BJ-c\id=J^{-1}(B-c\id)J=J^{-1}AJ.
\qedhere
\end{align*}
\end{proof}

\begin{thm}\textsc{(First Propagation Theorem)}\label{multfree1}
Let a group $G$ act by automorphisms on the holomorphic Hilbert bundle $q:\V\to M$. Assume that there exists an anti-holomorphic bundle endomorphism $(\sigma^{\V},\sigma^M)$ 
and a $G$-invariant subset $D\subset M$ with $\overline{D}^\mathrm{o}\neq \emptyset$, such that for any $m\in D$ there is $g\in G$ such that $g.m=\sigma^M(m)$, and $(\gamma_g|_{\V_m})^{-1}\sigma^{\V}_m$ commutes with the hermitian part $\rho(G_m)'_h$ 
of $\rho(G_m)'$. Then any unitary representation $(\pi,\H)$ of $G$ realized in $\Gamma(\V)$ is multiplicity-free. 
\end{thm}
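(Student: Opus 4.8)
The plan is to construct an antiunitary operator $J$ on $\H$ and then apply Lemma~\ref{conjcommute}, in the form (c)$\Rightarrow$(a), to the commutant $\M := \pi(G)' \subseteq B(\H)$. Since the reproducing kernel determines a Hilbert subspace of $\Gamma(\V)$ together with its inner product (Lemma~\ref{lemeqkern}(1)), the natural strategy is to produce $J$ as the unique unitary/antiunitary intertwiner between $\H$ and an auxiliary reproducing kernel space built from $\sigma^\V$, and then to identify the auxiliary space with $\H$ itself by comparing reproducing kernels on the dense-in-open subset $D$.

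Concretely, first I would use the antiholomorphic bundle endomorphism $\sigma^\V$ to define, for $s \in \H$, the section $(Js)(m) := \sigma^\V_{\sigma^M(m)}\big(\overline{s(\sigma^M(m))}\big)$ — more precisely one passes through the conjugate bundle or, via the embedding $\Psi$ of Lemma~\ref{lemeqkern}, works with conjugate-holomorphic functions on $\V^*$, so that $Js$ is again a \emph{holomorphic} section of $\V$. One checks that the image $\H^\sigma := J\H$, equipped with the inner product making $J$ antiunitary, is a reproducing kernel Hilbert subspace of $\Gamma(\V)$ with continuous point evaluations, and its kernel $Q^\sigma$ is expressible in terms of $Q$, $\sigma^\V$ and $\sigma^M$; in particular on the diagonal $Q^\sigma(m,m) = \sigma^\V_{\sigma^M(m)}\, \overline{Q(\sigma^M(m),\sigma^M(m))}\, (\sigma^\V_{\sigma^M(m)})^{-1}$, up to the appropriate adjoint/transpose bookkeeping.

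The heart of the argument is then to show $Q^\sigma(m,m) = Q(m,m)$ for $m \in D$. Fix $m \in D$ and pick $g \in G$ with $g.m = \sigma^M(m)$. By the equivariance of the kernel, Lemma~\ref{equivkernel}, $Q(\sigma^M(m),\sigma^M(m)) = Q(g.m,g.m) = (\gamma_g|_{\V_m})\, Q(m,m)\, (\gamma_g|_{\V_m})^{-1}$. Substituting this into the formula for $Q^\sigma(m,m)$, and writing $T := (\gamma_g|_{\V_m})^{-1}\sigma^\V_m$ (a conjugate-linear operator on $\V_m$), the desired equality $Q^\sigma(m,m) = Q(m,m)$ reduces to the assertion that $T$ commutes with $Q(m,m)$ in the appropriate conjugate-linear sense. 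Now $Q(m,m)$ is a hermitian operator on $\V_m$ which, by Lemma~\ref{equivkernel}, commutes with $\rho_m(G_m)$; hence $Q(m,m)$ lies in the hermitian part $\rho_m(G_m)'_h$ of the commutant, and the hypothesis that $T$ commutes with $\rho_m(G_m)'_h$ gives exactly the needed relation. (Here one must be a little careful that commuting with the hermitian part of the commutant, in the antilinear sense, forces the kernel identity; this is where the computation $T\, Q(m,m)\, T^{-1} = Q(m,m)$ — rather than $= Q(m,m)^*$ — enters, and it is the reason the hypothesis is phrased with $\rho(G_m)'_h$.)

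Having established $Q^\sigma(m,m) = Q(m,m)$ on $D$, Lemma~\ref{lemeqkern}(2) (using $M$ connected and $\overline{D}^{\mathrm o} \neq \emptyset$) yields $Q^\sigma = Q$ as holomorphic sections of $B(\V)$, and then Lemma~\ref{lemeqkern}(1) gives $\H^\sigma = \H$ with equal inner products, i.e. $J$ is an antiunitary operator \emph{of} $\H$. It remains to verify that $J$ commutes with $\M_h = \pi(G)'_h$: this follows from the $G$-equivariance of $\sigma^\V$ with respect to some automorphism (the compatibility built into the notion of antiholomorphic bundle endomorphism), which makes $J$ intertwine $\pi$ with a twist of $\pi$, so conjugation by $J$ preserves $\pi(G)'$ and acts on it as an antilinear $*$-preserving map; restricted to the hermitian part it is therefore the identity, giving $J \in (\M_h)'^{\R}$. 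Lemma~\ref{conjcommute}, (c)$\Rightarrow$(a), then shows $\pi(G)'$ is commutative, i.e. $(\pi,\H)$ is multiplicity-free. The main obstacle I anticipate is purely in the bookkeeping of conjugate-linear maps and the opposite complex structure: making precise in what sense $J$ is well defined on holomorphic sections (rather than merely on smooth ones), and tracking adjoints/transposes so that "$T$ commutes with the hermitian part of the commutant" translates into the exact kernel identity needed — the representation-theoretic content, once that is set up, is immediate from Lemmas~\ref{lemeqkern}, \ref{equivkernel} and \ref{conjcommute}.
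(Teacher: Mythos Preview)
Your first step---defining $J$ via $\sigma^\V$, comparing the reproducing kernel $Q^\sigma$ of the image space with $Q$ on the diagonal over $D$, and invoking Lemmas~\ref{lemeqkern} and~\ref{equivkernel} to conclude that $J$ is an antiunitary operator \emph{of} $\cH$---is essentially the paper's first step, modulo minor notational slips (your formula for $Js$ has a stray $\overline{\,\cdot\,}$, since $\sigma^\V$ is already fiberwise antilinear, and you want $(\sigma^\V_m)^{-1}$ rather than $\sigma^\V_{\sigma^M(m)}$ to land in the right fiber).

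The genuine gap is in your second step. You argue that $J$ commutes with $\pi(G)'_h$ because ``the $G$-equivariance of $\sigma^\V$ with respect to some automorphism \ldots\ makes $J$ intertwine $\pi$ with a twist of $\pi$''. But Theorem~\ref{multfree1} does \emph{not} assume any compatibility of $\sigma^\V$ with the $G$-action; no automorphism $\sigma^G$ is postulated, and the element $g$ with $g.m=\sigma^M(m)$ is allowed to depend on $m$ in an uncontrolled way. (That compatibility hypothesis is only introduced later, for the Second Propagation Theorem.) Moreover, even granting such a $\sigma^G$, your inference ``conjugation by $J$ preserves $\pi(G)'$ and is an antilinear $*$-preserving map, hence is the identity on the hermitian part'' is simply false: an antilinear $*$-automorphism of a von Neumann algebra need not fix hermitian elements---that is exactly the conclusion one is trying to prove, equivalent to commutativity by Lemma~\ref{conjcommute}.

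The paper closes this gap by a different, and rather elegant, device: it reruns the entire first step not just for $\cH$ but for $\cH_A$, the same vector space with the deformed inner product $\langle s_1,s_2\rangle_{\cH_A}:=\langle As_1,s_2\rangle_\cH$, where $A\in\pi(G)'$ is an arbitrary positive invertible element. Because $A$ commutes with $\pi(G)$, the representation $\pi$ is still unitary on $\cH_A$, so $\cH_A$ is again a $G$-invariant reproducing kernel subspace of $\Gamma(\V)$ and the first step applies verbatim to give $\langle Js_1,Js_2\rangle_{\cH_A}=\langle s_2,s_1\rangle_{\cH_A}$. Unwinding this identity yields $J^{-1}AJ=A$, and Lemma~\ref{conjcommute}(d)$\Rightarrow$(a) finishes the proof. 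The point is that the kernel argument already encodes all the needed information about the commutant, so one extracts $J\in(\pi(G)'_h)'^{\R}$ by varying the inner product rather than by appealing to a nonexistent global equivariance.
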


Note that the group $G$ is not assumed to be a Banach--Lie group. Also note that the existence of an anti-unitary operator $(\gamma_g|_{\V_m})^{-1}\sigma^{\V}_m$ commuting with $\rho_m(G_m)'_h$ implies by Lemma \ref{conjcommute} that the representation $\rho_m:G_m\to\U(\V_m)$ is multiplicity-free.

\begin{proof}
\textbf{First step.} We define a conjugate linear map
\begin{align}
J:\Gamma( \V)\to \Gamma(\V),\quad s\mapsto (\sigma^{\V})^{-1}\circ s\circ \sigma^M.\label{conj}
\end{align}
We will prove that $J:\H\to\H$ is a anti-unitary operator for any unitary representation $(\pi,\H)$ realized in $\Gamma (\V)$.

Consider the Hilbert space $\widetilde{\H}:=J(\H)\subseteq \Gamma(\V)$ equipped with the inner product
$$\langle Js_1,Js_2\rangle_{\widetilde{\H}}:=\langle s_2,s_1\rangle_{\H} \quad\mbox{for}\quad s_1,s_2\in\H,$$
so that $J:\H\to \widetilde{\H}$ is an anti-unitary operator.
Denote by $\widetilde{\Ev}_m:\widetilde{\H}\to \V_m$ the evaluations of $\widetilde{\H}$. Since 
$$\widetilde{\Ev}_m(Js)=(\sigma^{\V}_m)^{-1}(s(\sigma^M(m)))\quad\mbox{for}\quad s\in \H, m\in M,$$
we get 
$$\widetilde{\Ev}_m=(\sigma^{\V}_m)^{-1}\circ\Ev_{\sigma^M(m)}\circ J^{-1},$$
so that
\begin{align*} 
Q_{\widetilde{\H}}(m,n)&=\widetilde{\Ev}_m\widetilde{\Ev}_n^*=((\sigma^{\V}_m)^{-1}\Ev_{\sigma^M(m)} J^{-1})(J\Ev_{\sigma^M(n)}^*\sigma^{\V}_n)\\
&=(\sigma^{\V}_m)^{-1}Q_{\H}(\sigma^M(m),\sigma^M(n))\sigma^{\V}_n.
\end{align*}
We fix $m\in D$. By assumption there is $g\in G$ such that $\sigma^M(m)=g.m$ and $(\gamma_g|_{\V_m})^{-1}\sigma^{\V}_m$ commutes with the hermitian part 
$\rho(G^1_m)'_h$ of $\rho(G^1_m)'$. 
Since $Q(m,m)\in \rho_m(G_m)'_h$ by Lemma \ref{equivkernel}, 
\begin{align*}
Q_{\widetilde{\H}}(m,m)
&=(\sigma^{\V}_m)^{-1}Q_{\H}(\sigma^M(m),\sigma^M(m))\sigma^{\V}_m\\
&=(\sigma^{\V}_m)^{-1}Q_{\H}(g.m,g.m)\sigma^{\V}_m\\
&=(\sigma^{\V}_m)^{-1}(\gamma_g|_{\V_m})Q_{\H}(m,m)(\gamma_g|_{\V_m})^{-1}\sigma^{\V}_m=Q_{\H}(m,m),
\end{align*}
so that $Q_{\widetilde{\H}}=Q_{\H}$ on $\diag(D)\subseteq M\times M$. By Lemma \ref{lemeqkern}, the Hilbert space $\widetilde{\H}$ coincides with $\H$ and
\begin{equation}
  \label{eq:j-rel}
\langle Js_1,Js_2\rangle_{\H}=\langle Js_1,Js_2\rangle_{\widetilde{\H}}=\langle s_2,s_1\rangle_{\H} \quad\mbox{for}\quad s_1,s_2\in \H.
\end{equation}

\textbf{Second step}. Assume that $A\in \pi(G)'$ is positive and invertible. We define a compatible inner product on $\H$ by  
$$\langle s_1,s_2\rangle_{\H_A}:=\langle As_1,s_2\rangle_{\H}.$$
The space $\H$ with the new inner product will be denoted by $\H_A$.
For $s_1,s_2\in \H$ and $g\in G$ we have 
\begin{align*}
\langle \pi(g)s_1,\pi(g)s_2\rangle_{\H_A}&=\langle A\pi(g)s_1,\pi(g)s_2\rangle_{\H}
=\langle \pi(g)As_1,\pi(g)s_2\rangle_{\H}=\langle As_1,s_2\rangle_{\H}=\langle s_1,s_2\rangle_{\H_A}.
\end{align*}
Therefore $\pi$ also defines a unitary representation on $\H_A$ and we can 
apply \eqref{eq:j-rel} to $\H_A$ to obtain
\begin{align*}
\langle As_1,s_2\rangle_{\H}&=\langle s_1,s_2\rangle_{\H_A}=\langle Js_2,Js_1\rangle_{\H_A}=\langle AJs_2,Js_1\rangle_{\H}=\langle Js_2,AJs_1\rangle_{\H}\\
&=\langle Js_2,JJ^{-1}AJs_1\rangle_{\H}=\langle J^{-1}AJs_1,s_2\rangle_{\H}.
\end{align*}
Hence $A=J^{-1}AJ$, i.e., $J$ commutes with $A$, and by Lemma \ref{conjcommute} the von Neumann algebra $\pi(G)'$ is commutative.
\end{proof}

\subsection{Discretely decomposable representation on the fiber}

The following lemma shows how the commuting condition in 
the First Propagation Theorem can be expressed in the classical 
context where the isotropy representation decomposes discretely. 

\begin{lem}\label{conjcommutedisc}
Let $\M\subseteq B(\H)$ be a commutative von Neumann algebra, so that $\H=\oplus_{i\in I}\H_i$ and the action of $\M'$ on $\H_i$ is irreducible for each $i\in I$. Then the following conditions on an anti-unitary $J$ acting on $\H$ are equivalent:
\begin{itemize}
\item[\rm(a)] $J$ commutes with $\M_h$.
\item[\rm(b)] $J(\H_i)\subseteq\H_i$ for $i\in I$. 
\end{itemize}
\end{lem}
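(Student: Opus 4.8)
The claim is equivalence of (a) $J$ commutes with $\M_h$ and (b) $J$ preserves each isotypic component $\H_i$. Since $\M$ is commutative, $\M = \M''$ lies in $\M'$... wait, no. Let me reconsider. $\M$ commutative means $\M \subseteq \M'$. The decomposition $\H = \oplus_i \H_i$ with $\M'$ acting irreducibly on each $\H_i$ — so the $\H_i$ are the "isotypic" blocks, and $\M$ acts on $\H_i$ as scalars (since $\M \subseteq \M'$ and $\M'$ acts irreducibly, $\M|_{\H_i} \subseteq (\M'|_{\H_i})' = \C \id$). So on $\H_i$, every element of $\M_h$ acts as a real scalar $\lambda_i(A)\id$, and these scalars separate the blocks.

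**Direction (b) $\Rightarrow$ (a).** I would first argue that a self-adjoint $A \in \M_h$ acts on $\H_i$ as a real scalar $\lambda_i(A)$, as above. Given $s \in \H_i$, write $As = \lambda_i(A) s$. If $J(\H_i) \subseteq \H_i$, then $JAs = J(\lambda_i(A) s) = \lambda_i(A) Js$ (using $\lambda_i(A) \in \R$ and $J$ conjugate-linear). On the other hand $AJs = \lambda_i(A) Js$ since $Js \in \H_i$. Hence $JAs = AJs$ on each $\H_i$, and summing over $i$ gives $JA = AJ$ on $\H$.

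**Direction (a) $\Rightarrow$ (b).** For each $i$, let $P_i$ be the orthogonal projection onto $\H_i$. The key point is that $P_i \in \M$: indeed $P_i$ lies in the center of $\M'$ (it commutes with $\M'$ since $\H_i$ is $\M'$-invariant, and the blocks being the isotypic pieces means $P_i \in \M'' = \M$ as $\M$ is commutative and equals its bicommutant — here I should be slightly careful and instead observe directly that since $\M|_{\H_i} = \C\id_{\H_i}$, the family $\{P_i\}$ generates the center, and in the commutative case $P_i \in \M$). Assuming $P_i \in \M_h$, condition (a) gives $J P_i = P_i J$, hence $J(\H_i) = J(P_i \H) = P_i(J\H) \subseteq \H_i$.

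**Main obstacle.** The only delicate point is justifying $P_i \in \M_h$, i.e. that the block projections of the $\M'$-isotypic decomposition actually belong to $\M$ and not merely to its weak closure or center — but since $\M$ is already a von Neumann algebra (hence $\M = \M''$) and its center is $\M \cap \M'$, and each $P_i$ commutes with both $\M$ (scalars on $\H_i$) and $\M'$ (invariant subspace), we get $P_i \in \M'' = \M$; being a projection it is self-adjoint, so $P_i \in \M_h$. I would spell this out carefully, as it is the crux; everything else is the routine scalar computation above.
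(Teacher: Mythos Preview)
Your proof is correct and follows essentially the same approach as the paper. The paper condenses both directions into the single observation that $\M_h = \bigoplus_{i\in I}^\infty \R\,\id_{\H_i}$ by Schur's Lemma, from which the equivalence is immediate; your argument simply unpacks the two inclusions behind this identity (the forward one via the scalar action on each $\H_i$, the reverse via $P_i \in \M'' = \M$) into the two implications.
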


\begin{proof} As 
$\M_h=\bigoplus^{\infty}_{i\in I}\R \id_{\H_i}$  by Schur's Lemma, 
$J\in(\M_h)'^{\R}$ is equivalent to $J(\H_i)\subseteq \H_i$ for $i\in I$.
\end{proof}

 
\begin{prop}\label{doublecom1}
If, for $m\in M$, the isotropy representation on the fiber $\V_m$ restricted to a subgroup $G^1_m\subseteq G_m$ is multiplicity free with irreducible decomposition $\V_m=\bigoplus_{i\in I} \V_m^{(i)}$ and there exists a 
$g\in G$ such that $(\gamma_g|_{\V_m})^{-1}\sigma^{\V}_m(\V_m^{(i)})= \V_m^{(i)}$ for all $i\in I$, then the anti-unitary operator $(\gamma_g|_{\V_m})^{-1}\sigma^{\V}_m$ commutes with $\rho_m(G_m)'_h$.
\end{prop}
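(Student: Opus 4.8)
The plan is to reduce the statement to Lemma~\ref{conjcommutedisc} applied to the \emph{subgroup} $G^1_m$, and then to transfer the resulting commutation relation from the commutant of $\rho_m(G^1_m)$ to the a priori smaller commutant of $\rho_m(G_m)$. Throughout I write $J := (\gamma_g|_{\V_m})^{-1}\sigma^{\V}_m$ for the operator in question.

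First I would verify that $J$ is a genuine anti-unitary operator on the single Hilbert space $\V_m$. Since $\sigma^{\V}$ is an anti-holomorphic, fiberwise isometric bundle endomorphism, it restricts on the fiber to an anti-unitary $\sigma^{\V}_m\colon \V_m \to \V_{\sigma^M(m)}$, while $\gamma_g$, being an isometric holomorphic automorphism, restricts to a unitary $\gamma_g|_{\V_m}\colon \V_m \to \V_{g.m}$. The very assertion that the composite $(\gamma_g|_{\V_m})^{-1}\sigma^{\V}_m$ maps $\V_m$ into itself forces $g.m = \sigma^M(m)$, so that $(\gamma_g|_{\V_m})^{-1}\colon \V_{\sigma^M(m)}\to \V_m$ and $J$ is an anti-unitary endomorphism of $\V_m$ of exactly the type to which Lemma~\ref{conjcommutedisc} applies.

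Next I would set $\M := \rho_m(G^1_m)'$. By hypothesis $\rho_m|_{G^1_m}$ is multiplicity-free, so $\M$ is a commutative von Neumann algebra, and the given decomposition $\V_m = \bigoplus_{i\in I}\V_m^{(i)}$ into $G^1_m$-irreducibles is precisely a decomposition on which the bicommutant $\M' = \rho_m(G^1_m)''$ acts irreducibly on each summand. This is exactly the setup of Lemma~\ref{conjcommutedisc}. Since the hypothesis furnishes $J(\V_m^{(i)}) = \V_m^{(i)} \subseteq \V_m^{(i)}$ for every $i\in I$, condition (b) of that lemma holds, and therefore $J$ commutes with $\M_h = \rho_m(G^1_m)'_h$.

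The step requiring care, and the one that distinguishes the intended conclusion from a statement merely about $G^1_m$, is the passage from $G^1_m$ to the larger group $G_m$; here the monotonicity of the commutant works in our favour. Because $G^1_m \subseteq G_m$ yields $\rho_m(G^1_m) \subseteq \rho_m(G_m)$ as sets of operators, taking commutants reverses the inclusion to $\rho_m(G_m)' \subseteq \rho_m(G^1_m)'$, and consequently $\rho_m(G_m)'_h \subseteq \rho_m(G^1_m)'_h$. Thus commuting with all of $\rho_m(G^1_m)'_h$ is a \emph{stronger} property than the claim, and it immediately gives that $J$ commutes with the subset $\rho_m(G_m)'_h$, which is the desired conclusion. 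The only genuine subtlety is to notice that the fine multiplicity-free structure is assumed relative to the smaller group $G^1_m$ while the conclusion concerns $G_m$; the reversal of inclusions under taking commutants makes this direction automatic rather than an obstruction.
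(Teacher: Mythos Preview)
Your proposal is correct and follows essentially the same approach as the paper: set $\M=\rho_m(G^1_m)'$, apply Lemma~\ref{conjcommutedisc} with $J=(\gamma_g|_{\V_m})^{-1}\sigma^{\V}_m$ and $\H_i=\V_m^{(i)}$ to get $J\in(\rho_m(G^1_m)'_h)'^{\R}$, and then use $\rho_m(G_m)'_h\subseteq\rho_m(G^1_m)'_h$ to conclude. Your write-up is simply more detailed than the paper's (e.g., spelling out why $J$ is anti-unitary on $\V_m$ and the commutant reversal), but the underlying argument is identical.
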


\begin{proof}
We apply Lemma \ref{conjcommutedisc} with $\M=\rho_m(G^1_m)'$, $J=(\gamma_g|_{\V_m})^{-1}\sigma^{\V}_m$ and $\H_i=\V_m^{(i)}$ and conclude that the anti-unitary operator $(\gamma_g|_{\V_m})^{-1}\sigma^{\V}_m$ commutes with $\rho_m(G^1_m)'_h$, and hence also with $\rho_m(G_m)'_h$. 
\end{proof}

Using Proposition \ref{doublecom1}, we obtain a special case of Theorem \ref{multfree1} which is an infinite dimensional version of \cite{kob}*{Theorem 2.2}.

\begin{thm}\label{multfree2}
Let a group $G$ act by automorphisms on the holomorphic Hilbert bundle $q:\V\to M$. Assume that there exists an anti-holomorphic bundle endomorphism $(\sigma^{\V},\sigma^M)$ and a $G$-invariant subset $D$ which is dense in an open subset of $M$ such that for any $m\in M$:
\begin{itemize}
\item[\rm{(F)}] The isotropy representation on the fiber $\V_m$ restricted to a subgroup $G^1_m\subseteq G_m$ is multiplicity free with irreducible decomposition 
$\V_m=\bigoplus_{i\in I} \V_m^{(i)}$.
\item[\rm{(C)}] There exists $g\in G$ such that $(\gamma_g|_{\V_m})^{-1}\sigma^{\V}_m(\V_m^{(i)})= \V_m^{(i)}$ for all $i\in I$. 
\end{itemize}
Then any unitary representation $(\pi,\H)$ of $G$ realized in $\Gamma(\V)$ is multiplicity-free. 
\end{thm}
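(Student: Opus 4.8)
The plan is to deduce Theorem~\ref{multfree2} directly from Theorem~\ref{multfree1} by verifying the single hypothesis of the latter, namely that for every $m\in D$ there is a $g\in G$ with $g.m=\sigma^M(m)$ and such that $(\gamma_g|_{\V_m})^{-1}\sigma^{\V}_m$ commutes with $\rho_m(G_m)'_h$. Observe first that in the present statement $D$ is only required to be dense in an open subset of $M$, which in particular gives $\bar D^{\mathrm o}\neq\emptyset$, so the topological hypothesis of Theorem~\ref{multfree1} is met. The remaining work is therefore purely the verification of the commuting condition at each point of $D$ (in fact at each point of $M$, since (F) and (C) are assumed for all $m\in M$).

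First I would fix $m\in D$ (or even $m\in M$). Hypothesis (C) supplies a $g\in G$ with $(\gamma_g|_{\V_m})^{-1}\sigma^{\V}_m(\V_m^{(i)})=\V_m^{(i)}$ for all $i\in I$; I would then apply Proposition~\ref{doublecom1} verbatim with this $g$ to conclude that the anti-unitary operator $J_m:=(\gamma_g|_{\V_m})^{-1}\sigma^{\V}_m$ commutes with $\rho_m(G_m)'_h$. (Internally, Proposition~\ref{doublecom1} is itself just Lemma~\ref{conjcommutedisc} applied with $\M=\rho_m(G^1_m)'$, $\H_i=\V_m^{(i)}$, using that $\M_h=\bigoplus_i\R\,\id_{\V_m^{(i)}}$ by Schur, plus the elementary inclusion $\rho_m(G^1_m)'\supseteq\rho_m(G_m)'$, hence $\rho_m(G_m)'_h\subseteq\rho_m(G^1_m)'_h$, so commuting with the larger self-adjoint part forces commuting with the smaller one.) I should also note that the chosen $g$ automatically satisfies $g.m=\sigma^M(m)$: this is forced by compatibility of the bundle endomorphism with the base map, since $\sigma^{\V}$ covers $\sigma^M$ and $\gamma_g$ covers the base action $m\mapsto g.m$, so the source and target fibers of $(\gamma_g|_{\V_m})^{-1}\sigma^{\V}_m$ being both $\V_m$ means $\sigma^M(m)=g.m$; if one prefers, one can simply add $g.m=\sigma^M(m)$ to the standing choice in (C) without loss, since otherwise $(\gamma_g|_{\V_m})^{-1}\sigma^{\V}_m$ would not even be an operator on $\V_m$.

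Having produced, for every $m\in D$, a group element $g$ with $g.m=\sigma^M(m)$ and with $(\gamma_g|_{\V_m})^{-1}\sigma^{\V}_m\in(\rho_m(G_m)'_h)'^{\R}$, the hypotheses of the First Propagation Theorem are satisfied, and the conclusion that any unitary representation $(\pi,\H)$ of $G$ realized in $\Gamma(\V)$ is multiplicity-free follows immediately from Theorem~\ref{multfree1}. No new analytic input (no reproducing-kernel argument, no construction of the conjugation $J$ on $\H$) is needed here; all of that is already packaged inside Theorem~\ref{multfree1}.

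The only genuinely delicate point — and it is a bookkeeping point rather than a mathematical obstacle — is the interplay of the two isotropy groups $G^1_m\subseteq G_m$: conditions (F) and (C) are phrased for the possibly smaller group $G^1_m$, whereas the commuting condition we must feed into Theorem~\ref{multfree1} concerns $\rho_m(G_m)'_h$. I expect this to be the main thing to get right, and it is handled exactly as in Proposition~\ref{doublecom1}: $G^1_m\subseteq G_m$ gives $\rho_m(G^1_m)\subseteq\rho_m(G_m)$, hence $\rho_m(G_m)'\subseteq\rho_m(G^1_m)'$ and $\rho_m(G_m)'_h\subseteq\rho_m(G^1_m)'_h$, so an anti-unitary commuting with the larger self-adjoint part $\rho_m(G^1_m)'_h$ a fortiori commutes with $\rho_m(G_m)'_h$. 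With that inclusion in hand the deduction is complete.
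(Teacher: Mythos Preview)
Your proposal is correct and follows exactly the paper's approach: the paper simply states that Theorem~\ref{multfree2} is obtained from Theorem~\ref{multfree1} via Proposition~\ref{doublecom1}, and you have spelled out precisely that reduction, including the bookkeeping with $G^1_m\subseteq G_m$ and the observation that $g.m=\sigma^M(m)$ is forced.
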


\subsection{Weakly visible actions on complex manifolds}

\begin{defn}\label{defweakvis}
\rm{If a group $G$ acts by holomorphic maps on a connected complex manifold $M$ we say that the action is \textit{$(S,\sigma^M)$-weakly visible} if $S$ is a 
subset $M$ and $\sigma^M$ is an antiholomorphic diffeomorphism of $M$ satisfying  
\begin{itemize}
\item[\rm{(WV1)}] \label{visible1} $D=G.S$ is dense in an open subset of $M$.  
\item[\rm{(WV2)}] \label{visible2} $\sigma^M|_S=\id$. 
\item[\rm{(WV3)}] \label{visible3} $\sigma^M$ preserves every $G$-orbit in $D$.
\end{itemize}
}
\end{defn}

\begin{rem}\label{section}
\rm{Since $S$ meets every $G$-orbit in $D$, 
there exists a subset $S_0 \subset S$ which 
 meets every $G$-orbit in a single point. Then $S_0$ also satisfies (WV1-3).}
\end{rem}

\begin{rem}
\rm{Condition (WV3) in Definition \ref{defweakvis} follows from conditions (WV1/2) if there is an automorphism $\sigma^G$ of $G$ such that
\begin{align*}
\sigma^G (g).\sigma^M(m)=\sigma^M(g.m) \quad\mbox{for}\quad g\in G,m\in D. 
\end{align*}  
}
\end{rem}

\begin{defn}
\rm{A bundle endomorphism $(\sigma^{\V},\sigma^M)$ is said to be \textit{compatible with the $G$-action and the automorphism $\sigma^G$ of $G$} if $\sigma^{\V}$ is an intertwining map between the action $\gamma$ and the action $\gamma\circ\sigma^G$, i.e., on $\bV$ 
we have 
\begin{align}
\gamma_{\sigma^G(g)}=\sigma^{\V}\circ\gamma_g\circ(\sigma^{\V})^{-1}
\quad \mbox{ for } \quad g \in G. \label{resp}
\end{align}
This implies in particular 
\begin{align}
\sigma^G(g).\sigma^M(m)=\sigma^M(g.m) \quad\mbox{for}\quad g\in G,m\in M.
\label{compat}
\end{align} }
\end{defn}

\begin{prop}
If the anti-holomorphic bundle endomorphism $(\sigma^{\V},\sigma^M)$ is compatible with the $G$-action and with an automorphism $\sigma^G$ of $G$, then the representation $\delta$ of $G$ on $\Gamma(\V)$ satisfies 
$$\delta_{\sigma^G(g)}=J^{-1}\delta_gJ,\quad\mbox{for}\quad g\in G,$$
where $J$ is the anti-linear operator defined in \eqref{conj}.
\end{prop}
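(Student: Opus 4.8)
The plan is to unwind both sides of the claimed identity $\delta_{\sigma^G(g)} = J^{-1}\delta_g J$ by direct computation, evaluating everything on an arbitrary holomorphic section $s \in \Gamma(\V)$ at an arbitrary base point $m \in M$, and to feed in the two compatibility relations \eqref{resp} and \eqref{compat} at the appropriate spots. Concretely, I would start from the definition \eqref{conj}, so that $(Js)(m) = (\sigma^\V_m)^{-1}\bigl(s(\sigma^M(m))\bigr)$, and note that $J^{-1}$ has the analogous form with $\sigma^\V$ and $\sigma^M$ replaced by their inverses (using that $(\sigma^\V,\sigma^M)$ is a bundle endomorphism, i.e.\ $q\circ\sigma^\V = \sigma^M\circ q$, so the inverse is again a bundle map over $(\sigma^M)^{-1}$).

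The main computation is then to expand $(J^{-1}\delta_g J s)(m)$ from the inside out: apply $J$ to $s$, apply $\delta_g$ to $Js$ using \eqref{actionbundle}, then apply $J^{-1}$ and evaluate at $m$. Tracking the base points, one obtains $(J^{-1}\delta_g J s)(m) = \sigma^\V_{?}\Bigl(\gamma_g\bigl((Js)(g^{-1}.\sigma^M(m)\ \text{read through}\ (\sigma^M)^{-1})\bigr)\Bigr)$ — the bookkeeping of which fiber each operator lives on is the part that must be done carefully but is otherwise routine. The key algebraic input enters twice: once via \eqref{compat}, which lets me rewrite $(\sigma^M)^{-1}(g^{-1}.\sigma^M(m))$ as $\sigma^G(g)^{-1}.m$ (equivalently, $g^{-1}.\sigma^M(m) = \sigma^M(\sigma^G(g)^{-1}.m)$), and once via \eqref{resp} in the form $\sigma^\V \circ \gamma_g \circ (\sigma^\V)^{-1} = \gamma_{\sigma^G(g)}$, which collapses the sandwich of $\sigma^\V$, $\gamma_g$, $(\sigma^\V)^{-1}$ into $\gamma_{\sigma^G(g)}$. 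After these substitutions the expression becomes exactly $\gamma_{\sigma^G(g)}\bigl(s(\sigma^G(g)^{-1}.m)\bigr) = (\delta_{\sigma^G(g)} s)(m)$, which is the desired conclusion.

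I do not anticipate a genuine obstacle here; the statement is essentially a formal consequence of the definitions, and the proof is a short chain of equalities. The only point requiring a little care is the fiberwise bookkeeping — making sure that when I write $\gamma_g\circ(\sigma^\V)^{-1}\circ\gamma_g^{-1}$ or similar, the operators are being applied to elements of the correct fibers so that \eqref{resp} can legitimately be invoked at that fiber — but since \eqref{resp} is an identity of bundle maps over $M$, not just at a single point, this causes no difficulty. I would present the proof as a single \begin{align*}\ldots\end{align*} display evaluating $(J^{-1}\delta_g J s)(m)$ step by step, with the uses of \eqref{resp} and \eqref{compat} flagged, followed by the one-line identification with $(\delta_{\sigma^G(g)}s)(m)$.

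\begin{proof}
Let $s \in \Gamma(\V)$, $m \in M$ and $g \in G$. By \eqref{conj} we have $(Js)(m) = (\sigma^\V_m)^{-1}\bigl(s(\sigma^M(m))\bigr)$, and since $(\sigma^\V,\sigma^M)$ is an anti-holomorphic bundle endomorphism, $J^{-1}$ is given by the same formula with $\sigma^\V$ and $\sigma^M$ replaced by their inverses. Using \eqref{actionbundle}, we compute
\begin{align*}
(J^{-1}\delta_g J s)(m)
&= \sigma^\V_{(\sigma^M)^{-1}(m)}\Bigl((\delta_g J s)\bigl((\sigma^M)^{-1}(m)\bigr)\Bigr)\\
&= \sigma^\V_{(\sigma^M)^{-1}(m)}\Bigl(\gamma_g\bigl((Js)(g^{-1}.(\sigma^M)^{-1}(m))\bigr)\Bigr)\\
&= \sigma^\V_{(\sigma^M)^{-1}(m)}\Bigl(\gamma_g\bigl((\sigma^\V)^{-1}\bigl(s(\sigma^M(g^{-1}.(\sigma^M)^{-1}(m)))\bigr)\bigr)\Bigr).
\end{align*}
By \eqref{compat} applied to $g^{-1}$ and the point $(\sigma^M)^{-1}(m)$ we have $\sigma^M(g^{-1}.(\sigma^M)^{-1}(m)) = \sigma^G(g)^{-1}.m$, and by \eqref{resp} the composition $\sigma^\V\circ\gamma_g\circ(\sigma^\V)^{-1}$ equals $\gamma_{\sigma^G(g)}$ as a bundle map. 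Substituting these,
\begin{align*}
(J^{-1}\delta_g J s)(m)
&= \bigl(\sigma^\V\circ\gamma_g\circ(\sigma^\V)^{-1}\bigr)\bigl(s(\sigma^G(g)^{-1}.m)\bigr)\\
&= \gamma_{\sigma^G(g)}\bigl(s(\sigma^G(g)^{-1}.m)\bigr)
= (\delta_{\sigma^G(g)} s)(m).
\end{align*}
Since $s \in \Gamma(\V)$ and $m \in M$ were arbitrary, $\delta_{\sigma^G(g)} = J^{-1}\delta_g J$ for all $g \in G$.
\end{proof}
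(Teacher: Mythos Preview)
Your proof is correct and is exactly what the paper has in mind: the paper's own proof is the single sentence ``This is immediate from \eqref{actionbundle}, \eqref{conj} and \eqref{resp},'' and your computation is precisely the routine unwinding of that claim. The only minor remark is that the paper does not cite \eqref{compat} explicitly, since \eqref{compat} is recorded as a consequence of \eqref{resp}; your use of it is of course legitimate.
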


\begin{proof}
This is immediate from \eqref{actionbundle}, \eqref{conj} and \eqref{resp}. 
\end{proof}

\subsection{Conditions on a slice $S$}

By using the concept of a
 weakly-visible action we give a second form of the propagation theorem where the conditions are imposed on the slice $S$ instead on all of $D$.

\begin{thm}\textsc{(Second Propagation Theorem).}\label{multfreevis}
Let $q:\V\to M$ be a $G$-equivariant holomorphic vector bundle with an anti-holomorphic vector bundle endomorphim $(\sigma^{\V},\sigma^M)$ which is compatible 
with the $G$-action and the automorphism $\sigma^G$ of $G$.
Assume also that
\begin{itemize}
\item[\rm{(B)}] \label{cond21} The action on $M$ is $(S,\sigma^M)$-weakly visible.
\item[\rm{(F)}] \label{cond22} For any $s\in S$ the anti-unitary operator  $\sigma^{\V}_s$ commutes with $\rho_s(G_s)'_h$. 
\end{itemize}
Then any unitary representation $(\pi,\H)$ of $G$ realized in $\Gamma (\V)$ is multiplicity-free.
\end{thm}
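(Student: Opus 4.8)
The plan is to deduce the Second Propagation Theorem from the First Propagation Theorem (Theorem~\ref{multfree1}) by checking that the hypotheses of the latter are met with the subset $D := G.S$. First I would observe that by (WV1) the set $D$ is dense in an open subset of $M$, so $\overline{D}^{\mathrm o} \neq \emptyset$, and $D$ is manifestly $G$-invariant. So the only thing that needs verification is the pointwise commuting condition: for every $m \in D$ there is some $g \in G$ with $g.m = \sigma^M(m)$ such that $(\gamma_g|_{\V_m})^{-1}\sigma^{\V}_m$ commutes with $\rho_m(G_m)'_h$.

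Fix $m \in D$ and write $m = h.s$ for some $h \in G$ and $s \in S$. The strategy is to transport the hypothesis (F), which holds at $s$, to the point $m$ using equivariance of $\sigma^{\V}$. By (WV3) the orbit $G.m = G.s$ is $\sigma^M$-invariant, and in fact the compatibility relation \eqref{compat} gives $\sigma^M(m) = \sigma^M(h.s) = \sigma^G(h).\sigma^M(s) = \sigma^G(h).s$, using (WV2). Hence the element $g := \sigma^G(h) h^{-1}$ satisfies $g.m = \sigma^G(h).(h^{-1}.m) = \sigma^G(h).s = \sigma^M(m)$, which is the required $g$. It remains to identify $(\gamma_g|_{\V_m})^{-1}\sigma^{\V}_m$ as an operator conjugate, via $\gamma_h$, to the operator $\sigma^{\V}_s$ appearing in (F), and to note that conjugation by $\gamma_h$ carries $\rho_s(G_s)'_h$ onto $\rho_m(G_m)'_h$.

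For the first identification I would compute, using the compatibility relation \eqref{resp} in the fiberwise form $\gamma_{\sigma^G(h)}|_{\V_{\sigma^M(s)}} = \sigma^{\V}_{h.s} \circ (\gamma_h|_{\V_s}) \circ (\sigma^{\V}_s)^{-1}$ together with $\sigma^{\V}_m = \sigma^{\V}_{h.s}$ and $\sigma^M(m) = \sigma^G(h).s$, that
\begin{align*}
(\gamma_g|_{\V_m})^{-1}\sigma^{\V}_m
&= (\gamma_{h}|_{\V_s})\,(\gamma_{\sigma^G(h)}|_{\V_s})^{-1}\,\sigma^{\V}_{h.s}
= (\gamma_h|_{\V_s})\, \sigma^{\V}_s\, (\gamma_h|_{\V_s})^{-1},
\end{align*}
so that $(\gamma_g|_{\V_m})^{-1}\sigma^{\V}_m = \gamma_h|_{\V_s} \circ \sigma^{\V}_s \circ (\gamma_h|_{\V_s})^{-1}$ as an antiunitary operator from $\V_m$ to $\V_m$ (here I am using that $g = \sigma^G(h)h^{-1}$ maps $m$ to $\sigma^M(m)$ and reading $\gamma_g$ fiberwise appropriately; the bookkeeping of which fiber each factor acts on is the one routine point to get right). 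For the second identification, note that $G_m = h G_s h^{-1}$, and since $\gamma$ is a bundle automorphism action we have $\rho_m(h k h^{-1}) = (\gamma_h|_{\V_s}) \rho_s(k) (\gamma_h|_{\V_s})^{-1}$ for $k \in G_s$; hence $\rho_m(G_m)' = (\gamma_h|_{\V_s})\, \rho_s(G_s)'\, (\gamma_h|_{\V_s})^{-1}$, and this identification respects hermitian parts since $\gamma_h|_{\V_s}$ is unitary. Therefore, since by (F) $\sigma^{\V}_s$ commutes with $\rho_s(G_s)'_h$, conjugating by $\gamma_h|_{\V_s}$ shows $(\gamma_g|_{\V_m})^{-1}\sigma^{\V}_m$ commutes with $\rho_m(G_m)'_h$, as required. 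Applying Theorem~\ref{multfree1} with this $D$ then yields the multiplicity-freeness of $(\pi,\H)$.

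I expect the main obstacle to be purely notational rather than conceptual: carefully tracking which fiber each of the operators $\gamma_g$, $\gamma_h$, $\gamma_{\sigma^G(h)}$ and $\sigma^{\V}$ is being restricted to, so that the composition $(\gamma_g|_{\V_m})^{-1}\sigma^{\V}_m$ is correctly recognized as the $\gamma_h$-conjugate of $\sigma^{\V}_s$. Once the relation $\sigma^M(m) = \sigma^G(h).s$ and the fiberwise form of \eqref{resp} are in hand, the computation is forced, and the reduction to the First Propagation Theorem is immediate.
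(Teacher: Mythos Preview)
Your proposal is correct and follows essentially the same route as the paper's proof: set $D=G.S$, write $m=h.s$, take $g=\sigma^G(h)h^{-1}$, use the compatibility \eqref{resp} to identify $(\gamma_g|_{\V_m})^{-1}\sigma^{\V}_m$ with the $\gamma_h$-conjugate of $\sigma^{\V}_s$, transport the commutant via $G_m=hG_sh^{-1}$, and invoke Theorem~\ref{multfree1}. The paper additionally passes (via Remark~\ref{section}) to a slice $S_0$ meeting each orbit once, but this is only a notational convenience and not logically needed; your argument works without it.
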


\begin{proof}
We set $D:=G.S$ which is dense in an open subset of $M$ since the action on the base space is $(S,\sigma^M)$-weakly visible. By Remark \ref{section} we may assume that each $G$-orbit intersects $S$ only once. Then for $m\in D$ we get $m=g.s$ for some 
$g\in G$ and a unique $s\in S$. 

If we define $c_g:G_s\xrightarrow{\sim} G_m$, $\ell\mapsto g\ell g^{-1}$, then $\gamma_g|_{\V_s}:\V_s\to \V_m$ satisfies 
$$
\xy
\xymatrix{
\V_s \ar[d]_{\rho_s(\ell)} \ar[r]^{\gamma_g} & \V_m \ar[d]^{\rho_m(c_g(\ell))}
\\
\V_s \ar[r]^{\gamma_g} & \V_m
}\endxy
$$
for $\ell\in G_s$. Hence, since $G_m=c_g(G_s)$ we get
\begin{align}
((\rho_m(G_m))'_h)'^{\R}=(\gamma_g|_{\V_s})((\rho_s(G_s))'_h)'^{\R}(\gamma_g|_{\V_s})^{-1}.
\label{isomcom}
\end{align}
For $g':=\sigma^G(g)g^{-1}\in G$ we have
\begin{align*}
\sigma^M(m)&=\sigma^M(g.s)=\sigma^G(g).\sigma^M(s)&\mbox{by \eqref{compat}}\\
&=\sigma^G(g).s&\mbox{by (WV2) in Def. \ref{defweakvis}}\\
&=\sigma^G(g)g^{-1}g.s=g'.m.
\end{align*}
Using \eqref{resp} we obtain
\begin{align*}
(\gamma_{g'}|_{\V_m})^{-1}\sigma_m^{\V}=(\gamma_g|_{\V_s})(\gamma_{\sigma^G(g^{-1})}|_{\V_{\sigma^M(m)}})\sigma_m^{\V}
=(\gamma_g|_{\V_s})\sigma_s^{\V}(\gamma_g|_{\V_s})^{-1}.
\end{align*}
Since $\sigma^{\V}_s$ commutes with $\rho_s(G_s)'_h$ we conclude with \eqref{isomcom} that $(\gamma_{g'}|_{\V_m})^{-1}\sigma_m^{\V}$ commutes with $\rho_s(G_m)'_h$. Hence all the assumptions of Theorem \ref{multfree1} hold and the conclusion follows.
\end{proof}

\begin{rem}
\rm{Choosing $g=e$ for $m\in S$ in  Proposition \ref{doublecom1} we can replace condition (F) in Theorem \ref{multfreevis} by the weaker condition:
\begin{itemize}
\item[\rm(F')] For $s\in S$, the isotropy representation on the fiber $\V_s$ restricted to a subgroup $G^1_s\subseteq G_s$ is multiplicity-free with irreducible decomposition $\V_s=\bigoplus_{i\in I} \V_s^{(i)}$ and $\sigma^{\V}_s(\V_s^{(i)})=\V_s^{(i)}$ for any $i\in I$. 
\end{itemize}
We thus obtain a special case of Theorem \ref{multfreevis}}
\end{rem}

\section{Associated bundles}\label{assocbundles}

In this section we recall from \cite{bel} and \cite{neeb13a} how to define complex structures on associated Banach bundles. 
Based on these complex structures, we prove that certain vector bundle endomorphisms are antiholomorphic. 

Let $G$ be a Banach--Lie group with Lie algebra $\g$ and $H\subseteq G$ be a \textit{split Lie subgroup}, i.e., the Lie algebra $\h$ of $H$ has a closed complement in $\g$. Hence the homogeneous space $M:=G/H$ has a 
smooth manifold structure such that the projection 
$q_M:G\to G/H, g \mapsto gH$ 
is a submersion and defines a smooth $H$-principal bundle. 

Let $q:\V=G\times_H V\to M$ be a homogeneous vector bundle defined by the norm continuous representation $\rho:H\to \GL(V)$ on a Banach space $V$. We associate to each section $s:M\to \V$ the function $\tilde{s}:G\to V$ specified by $s(gH)=[g,\tilde{s}(g)]$ for 
$g\in G$. Then a 
function $f:G\to V$ is of the form $\tilde{s}$ for a section $s$ of $\V$ if and only if 
\begin{align}\label{funsec}
f(gh)=\rho(h)^{-1}f(g)\quad\mbox{for}\quad g\in G,h\in H. 
\end{align}
We write $C^{\infty}(G,V)_\rho$ for the set of 
smooth functions $f:G\to V$ satisfying \eqref{funsec}.

For every $g\in G$, we then have isomorphisms 
\[ \iota_g:V\to \V_{gH}=[g,V],\ v\mapsto [g,v]\] 
and the group $G$ acts in a canonical way on $\V\to M$ by bundle automorphism $\gamma_g([g',v])=[gg',v]$ and $g.g'H=gg'H$ for $g,g'\in G$ and $v\in V$.

\subsection{Complex structures on associated bundles}\label{compstructassoc}

We assume that the coset space $M:=G/H$ carries the structure of a complex manifold such that $G$ acts on $M$ by biholomorphic maps. Let $m_0:=q_M(e)\in M$ 
be the canonical base point and $\q\subseteq \g_{\C}$ be the kernel of the complex linear extension of the map $\g\to T_{m_0}(G/H)$, so that $\q$ is a complex linear subalgebra of $\g_{\C}$ invariant under $\Ad_H$. We call $\q$ the \textit{subalgebra defining the complex structure} on $M=G/H$ because specifying $\q$ means to identify $T_{m_0}(G/H)\simeq \g/\h$ with the complex Banach space $\g_{\C}/\q$, and thus specifying the complex structure on $M$. The subalgebra $\q$ satisfies $\q+\bar{\q}=\g_{\C}$, $\q\cap\bar{\q}=\h_{\C}$ and $\Ad_H(\q)=\q$. See \cite{bel} for further information on complex structures on homogeneous Banach manifolds.

We want to define complex structures on 
vector bundles $\V=G\times_H V$ over Banach homogeneous spaces $M=G/H$ associated to a norm continuous representation $\rho:H\to \GL(V)$ of the isotropy group on a complex Banach space $V$. Suppose that $H\subseteq G$ is a split 
Lie subgroup and, as above, $\q\subseteq \g_{\C}$ is a closed complex 
$\Ad(H)$-invariant subalgebra containing $\h_{\C}$ and specifying the complex 
structure on $G/H$. 
If $\rho:G\to\GL(V)$ is a norm continuous representation on the Banach space $V$, then a morphism $\beta:\q\to\gl(V)$ of complex Banach--Lie algebras is said to be an \textit{extension} of $\dd\rho$ if 
\begin{align}
\dd\rho=\beta|_{\h}\quad\mbox{and}\quad\beta(\Ad(h)x)=\rho(h)\beta(x)\rho(h)^{-1} \quad\mbox{for}\quad h\in H ,x\in \q .
\label{defext}
\end{align}

We associate to each $x\in \g$ the left invariant differential operator $L_x$ on $C^{\infty}(G,V)$ given by
$$(L_xf)(g)=\frac{d}{dt}\biggl|_{t=0}f(g\exp(tx))$$
and by complex linear extension we define the operators
\begin{align}
L_{x+iy}:=L_x+iL_y\quad\mbox{  for   }\quad  x,y\in \g.
\label{defdifop}
\end{align}   
For any extension $\beta$ of $\rho$, we write $C^{\infty}(G,V)_{\rho,\beta}$ for the subspace of those elements of $C^{\infty}(G,V)_\rho$ satisfying 
\begin{align}
L_wf=-\beta(w)f\quad \mbox{ for }\quad w\in\q. 
\label{defholsec}
\end{align} 

We recall the following theorem from \cite{neeb13a}*{Thm.~1.6}.

\begin{thm}
Let $V$ be a complex Banach space and $\rho:H\to \GL(V)$ be a norm continuous representation. Then, for any extension $\beta:\q\to\gl(V)$ of $\rho$, the associated bundle $\V=G\times_H V$ carries a unique structure of a holomorphic vector bundle over 
$M= G/H$, which is determined by the characterization of holomorphic sections $s:M\to \V$ as those for which $\tilde{s}\in C^{\infty}(G,V)_{\rho,\beta}$. Any such holomorphic bundle structure is $G$-invariant in the sense that $G$ acts on $\V$ by holomorphic vector bundle automorphisms. Conversely, every $G$-invariant holomorphic vector bundle structure on $\V$ is obtained from this construction.
\end{thm}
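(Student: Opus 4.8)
The plan is to build the holomorphic bundle structure locally, by producing holomorphic frames through the defining condition \eqref{defholsec}, then to spread it over all of $M$ by the $G$-action, and finally to pin it down via its sheaf of sections. Fix the base point $m_0=q_M(e)$ and recall that $\q+\bar{\q}=\g_\C$, $\q\cap\bar{\q}=\h_\C$ and $\Ad_H(\q)=\q$. Choose a local smooth section $\tau$ of the $H$-principal bundle $q_M\colon G\to M$ near $m_0$; it provides a smooth fiberwise-linear trivialization $\V|_U\cong U\times V$. Representing a section $s$ of $\V|_U$ by $\tilde s\in C^\infty(q_M^{-1}(U),V)_\rho$ and passing to its $V$-valued representative $\phi$ in this trivialization, the condition $L_w\tilde s=-\beta(w)\tilde s$ for $w\in\q$ is automatic for $w\in\h_\C$ by \eqref{funsec}, and for the remaining directions it becomes a first-order linear system $\bar\partial\phi+\Theta\phi=0$ with $\Theta$ a smooth $\End(V)$-valued $(0,1)$-form on $U$ assembled from $\dd\tau$ and $\beta$, i.e.\ the statement that $\phi$ is holomorphic for the Dolbeault operator twisted by $\Theta$. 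The algebraic point making this consistent is that this twisted operator is \emph{formally integrable}: since $[L_w,L_{w'}]=L_{[w,w']}$, the operators $\beta(w)$ are constant, $\beta$ is a Lie algebra morphism and $\q$ is a subalgebra, one gets $[L_w+\beta(w),L_{w'}+\beta(w')]=L_{[w,w']}+\beta([w,w'])$, so the family $\{L_w+\beta(w):w\in\q\}$ is involutive, equivalently the curvature $(\bar\partial+\Theta)^2$ vanishes.

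The heart of the argument, and the main obstacle, is local solvability: near $e\in G$ the overdetermined system $(L_w+\beta(w))\tilde s=0$, $w\in\q$, must admit, for each $v\in V$, a (necessarily unique) solution $f_v\in C^\infty(G,V)_\rho$ with $f_v(e)=v$, the corresponding sections forming a local frame that trivializes $\V$ holomorphically over a neighborhood of $m_0$. In finite dimensions this is the Newlander--Nirenberg/Koszul--Malgrange phenomenon and follows from formal integrability by elliptic $\bar\partial$-theory, but in the Banach setting there is no such $\bar\partial$-Poincar\'e lemma; instead one exploits the homogeneous structure---for instance integrating successively along the ``holomorphic'' directions spanning $\bar{\q}$ by solving linear ODEs along one-parameter subgroups, the consistency of the iteration being precisely the involutivity established above. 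Once such a holomorphic trivialization $\Psi_{m_0}\colon U\times V\xrightarrow{\sim}\V|_U$ is in place---characterized by: a section $s$ over $U'\subseteq U$ satisfies \eqref{defholsec} iff $\Psi_{m_0}^{-1}\circ s$ is holomorphic---one transports it by $G$. Because each $\gamma_g$ is smooth and fiberwise linear and the condition \eqref{defholsec} is preserved by $s\mapsto\delta_g s$ (as $L_w$ is left invariant and $\beta$ does not depend on the group point), the maps $\gamma_g\circ\Psi_{m_0}$ give holomorphic trivializations over every $g.U$, with holomorphic transition functions; this defines a holomorphic vector bundle structure on $\V$ whose holomorphic sections are exactly the $s$ with $\tilde s\in C^\infty(G,V)_{\rho,\beta}$, and for which every $\gamma_g$ is a biholomorphic bundle automorphism.

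Uniqueness follows because a holomorphic vector bundle structure is determined by its sheaf of holomorphic sections (local holomorphic frames, hence the transition cocycle, are recovered from it), so any structure with the stated characterization coincides with the one just constructed. For the converse, suppose $\V$ already carries a $G$-invariant holomorphic bundle structure. Near $m_0$ it admits local holomorphic frames, so for each $v\in V$ there is a local holomorphic section $s$ with $s(m_0)=\iota_e(v)$; define $\beta(w)v:=-(L_w\tilde s)(e)$ for $w\in\q$. This is well defined: for $w\in\h_\C$ the value $(L_w\tilde s)(e)$ is forced by \eqref{funsec}, while for $w\in\q$ representing an anti-holomorphic direction a holomorphic section vanishing at $m_0$ has vanishing derivative there in that direction, so the value depends only on $v$. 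Then $\beta|_\h=\dd\rho$ follows from the same computation with \eqref{funsec}, the $\Ad(H)$-equivariance in \eqref{defext} from $G$-invariance of the structure together with \eqref{funsec}, and $\beta([w,w'])=[\beta(w),\beta(w')]$ by evaluating the operator identity $[L_w+\beta(w),L_{w'}+\beta(w')]=L_{[w,w']}+[\beta(w),\beta(w')]$ on holomorphic sections at $e$ and comparing with $(L_{[w,w']}+\beta([w,w']))\tilde s=0$. Thus $\beta$ is an extension of $\dd\rho$ in the sense of \eqref{defext}, and by the uniqueness part the given structure is the one associated with $\beta$.
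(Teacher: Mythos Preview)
The paper does not prove this theorem; it is recalled from \cite{neeb13a}*{Thm.~1.6} and stated without proof, so there is no in-paper argument to compare your attempt against. One can, however, glean the shape of the cited proof from the paper's later use of it: in the proof of Proposition~\ref{holoend} the local holomorphic trivializations are written as $(gH,v)\mapsto[g,F_i(g)v]$ with $F_i\colon q_M^{-1}(U_i)\to\GL(V)$ satisfying $F_i(gh)=\rho(h)^{-1}F_i(g)$ and $L_wF_i=-\beta(w)F_i$ for $w\in\q$ (equation~\eqref{difext1}), and the construction of such $F_i$ is attributed explicitly to the proof of \cite{neeb13a}*{Thm.~1.6}. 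This is precisely your ``local holomorphic frame'' step, packaged as a single $\GL(V)$-valued solution rather than a family $\{f_v\}_{v\in V}$, so your outline is on the right track.

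You rightly single out local solvability of the involutive system $\{L_w+\beta(w):w\in\q\}$ as the crux and the place where the Banach setting bites, but your proposed mechanism---``integrating successively along one-parameter subgroups spanning $\bar\q$''---is not a proof as stated. In infinite dimensions there is no finite list of directions to iterate over, and successive one-dimensional integrations do not by themselves produce a smooth function on a full neighborhood of $e$ in $G$; one must actually construct an $F\colon q_M^{-1}(U)\to\GL(V)$ solving all the equations simultaneously, and this is the genuine analytic content of \cite{neeb13a}*{Thm.~1.6}, which your sketch gestures at but does not supply. The remaining pieces of your outline---formal integrability from the Lie-algebra morphism property of $\beta$ and the closedness of $\q$, $G$-transport of the local trivialization via left invariance of the $L_w$, uniqueness via the sheaf of holomorphic sections, and the converse extraction of $\beta$ from a given $G$-invariant holomorphic structure---are correct in spirit and standard.
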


\subsection{The endomorphism bundle}\label{end}

Let $\V=G\times_H V\to M$ be a $G$-homogeneous Hilbert 
bundle as in Theorem \ref{compstructassoc}, where the representation 
$\rho$ is unitary. 
Then the complex manifold $M\times  M^{\rm op}$ is a complex homogeneous space $(G\times G)/(H\times H)$, where the complex structure is defined by the closed subalgebra $\q\oplus \bar{\q}$ of $\g_\C\oplus\g_\C$. On the Banach space $B(V)$ we consider the norm continuous representation 
\[ \tilde{\rho}:H\times H\to \GL(B(V))\quad \mbox{  given by } \quad 
\tilde{\rho}(h_1,h_2)A=\rho(h_1)A\rho(h_2)^*  = \rho(h_1)A\rho(h_2)^{-1} \] 
and the corresponding extension $\tilde{\beta}:\q\oplus\bar{\q}\to\gl(B(V))$ by $$\tilde{\beta}(x_1,x_2)A=\beta(x_1)A+A\beta(\overline{x_2})^*.$$
We write $\L:=(G\times G)\times_{(H\times H)}B(V)$ for the corresponding holomorphic Banach bundle over $M\times M^{\rm op}$. 
For every pair $(g_1,g_2)\in G\times G$ we have an isomorphism
\[ \gamma_{(g_1,g_2)} :B(V)\to B(\V_{q_M(g_2)},\V_{q_M(g_1)}),\quad \gamma_{(g_1,g_2)}(A)[g_2,v]= [g_1,Av].\]
This defines a map
\[ \gamma :G\times G\times B(V)\to B(\V)=\coprod_{m,n\in M} B(\V_m,\V_n),\quad \gamma (g_1,g_2,A)=\gamma_{(g_1,g_2)}(A).\]
For $h_1, h_2\in H$, we have
\begin{align*}
\gamma(g_1h_1,g_2h_2,\tilde{\rho}(h_1,h_2)^{-1}A)[g_2,v]&=\gamma(g_1h_1,g_2h_2,\tilde{\rho}(h_1,h_2)^{-1}A)[g_2h_2,\rho(h_2)^{-1}v]
\\
&=[g_1h_1,\rho(h_1)Av]=[g_1,Av]=\gamma(g_1,g_2,A)[g_2,v]
\end{align*} 
so that $\gamma$ factors through a bijection $\bar{\gamma}: \L\to B(\V)$. 
This provides a description of the bundle $\L$ as the endomorphism bundle of the Hilbert 
bundle $\V$, see \cite{br07}.

\subsection{Holomorphic morphism of equivariant principal bundles}

Let $q_i:\V_i=G_i\times_{H_i} V_i\to M_i=G_i/H_i$ for $i=1,2$ be homogeneous vector bundles defined by norm continuous unitary representations $\rho_i:H_i\to \U(V_i)$ on Hilbert spaces $V_i$.

Let $\lambda:G_1\to G_2$ 
be a homomorphism of Banach--Lie groups satisfying 
$\lambda(H_1)\subseteq H_2$, so that there is an induced map $\lambda^M:M_1\to M_2$ defined by $gH_1\mapsto \lambda(g)H_2$.  
Let $\psi:V_1\to V_2$ be an operator such that  
\begin{align}
\rho_2(\lambda(h))\circ\psi=\psi\circ\rho_1 (h) \quad\mbox{for}\quad h\in H_1.\label{intertwin}
\end{align}
We say that $(\lambda,\psi)$ is a morphism between the representations $\rho_1$ and $\rho_2$. These conditions imply that the map  
$$\lambda^{\V}:\V_1\to \V_2,\quad [g,v]\mapsto [\lambda (g),\psi (v)]$$
is well defined. It is a lift of the map $\lambda^M$ and it is complex linear on each fiber. If we differentiate \eqref{intertwin} we obtain
\begin{align*}
\dd\rho_2(\dd\lambda(w))\circ\psi=\psi\circ\dd\rho_1 (w) \quad\mbox{for}\quad w\in \h_1.
\end{align*}
If the $M_i=G_i/H_i$ carry complex structures defined by subalgebras $\q_i$, then $\lambda^M$ is holomorphic if and only if the complex linear extension $\dd\lambda_{\C}:\g_{1,\C}\to\g_{2,\C}$ satisfies $\dd\lambda_{\C}(\q_1)\subseteq \q_2$.
Assume further that $q_i:\V_i\to M_i$ have complex bundle structures defined by extensions $\beta_i:\q_i\to \gl(V_i)$ of $\dd\rho_i$ for $i=1,2$. If the intertwining operator $\psi$ also satisfies 
\begin{align}
\beta_2(\dd\lambda(w))\circ\psi=\psi\circ\beta_1 (w) \quad\mbox{for}\quad w\in \q_1,\label{intertwindif}
\end{align}
then we say that $(\lambda,\psi)$ is a \textit{morphism} between 
the representation $(\rho_1,\beta_1)$ of $(H,\fq)$ 
on $V_1$ and the representation $(\rho_2,\beta_2)$ on $V_2$.

\begin{prop}\label{holoend}
Assume that $(\lambda,\psi)$ is an intertwiner 
for the $H$-representations 
$(\rho_1, V_1)$ and $(\rho_2, V_2)$. 
Then $(\lambda,\psi)$ is a morphism between the representation $(\rho_1,\beta_1)$ of $V_1$ and the representation $(\rho_2,\beta_2)$ of $V_2$ if and only if $\lambda^M:M_1\to M_2$, $gH_1\mapsto \lambda(g)H_2$ and $\lambda^{\V}:\V_1\to \V_2$ are holomorphic.
\end{prop}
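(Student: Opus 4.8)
The plan is to verify that the characterization of holomorphic sections in terms of $C^\infty(G,V)_{\rho,\beta}$ (from Theorem \ref{compstructassoc}) transforms correctly under the pair $(\lambda,\psi)$. First I would reduce the statement about $\lambda^{\bV}$ being holomorphic to a statement about pullbacks of holomorphic sections: a $G$-equivariant bundle map covering the holomorphic map $\lambda^M$ is holomorphic if and only if, for every holomorphic section $s_2$ of $\bV_2$ defined over an open set $U \subseteq M_2$, the pullback section $(\lambda^M)^*s_2 \circ$-composed-with-fiberwise-$\lambda^{\bV}$ (more precisely, the section $m \mapsto (\lambda^{\bV})^{-1}_{\text{on fibers}}\dots$, handled via the associated function picture) is a holomorphic section of $\bV_1$. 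Here the cleanest route is to work entirely with the associated functions: a section $s_i$ of $\bV_i$ corresponds to $\tilde s_i \in C^\infty(G_i,V_i)_{\rho_i}$, and holomorphy of $s_i$ is equivalent to $\tilde s_i \in C^\infty(G_i,V_i)_{\rho_i,\beta_i}$, i.e. to $L_w \tilde s_i = -\beta_i(w)\tilde s_i$ for all $w \in \q_i$.

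The key computation I would then carry out: given a local holomorphic section $s_2$ of $\bV_2$, with associated function $\tilde s_2$, form $f := \psi^{-1}\circ \tilde s_2 \circ \lambda$ (assuming $\psi$ invertible for the "only if" direction; for the general statement one pulls back along $\lambda$ directly, $f := $ the function representing $(\lambda^{\bV})$-pullback, and checks the equivariance \eqref{funsec} using \eqref{intertwin}). I would compute $L_w f$ for $w \in \q_1$ using the chain rule for the left-invariant derivative: since $\lambda$ is a group homomorphism, $L_w(g_2 \circ \lambda) = L_{\dd\lambda(w)}(g_2)\circ\lambda$ for any smooth $g_2$ on $G_2$, extended complex-linearly via \eqref{defdifop}. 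Hence $L_w f = \psi^{-1}\big((L_{\dd\lambda_{\C}(w)}\tilde s_2)\circ\lambda\big)$. Since $\dd\lambda_{\C}(\q_1) \subseteq \q_2$ (the holomorphy of $\lambda^M$) and $\tilde s_2$ satisfies \eqref{defholsec}, this equals $\psi^{-1}\big((-\beta_2(\dd\lambda(w))\tilde s_2)\circ\lambda\big) = -\psi^{-1}\beta_2(\dd\lambda(w))\psi\, f$. The morphism condition \eqref{intertwindif} says exactly $\psi^{-1}\beta_2(\dd\lambda(w))\psi = \beta_1(w)$ on $\q_1$, so $L_w f = -\beta_1(w) f$, i.e. $f \in C^\infty(G_1,V_1)_{\rho_1,\beta_1}$, so the pullback section is holomorphic; running this for enough local sections to generate fibers shows $\lambda^{\bV}$ is holomorphic. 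For the converse, I would reverse the argument: holomorphy of $\lambda^{\bV}$ forces $L_w f + \beta_1(w)f = 0$ for all pulled-back holomorphic $\tilde s_2$, and by choosing $s_2$ with prescribed value and derivative data at $m_0$ (possible since local holomorphic sections of $\bV_2$ realize arbitrary $1$-jets compatible with the $\q_2$-constraint) one isolates the identity $\psi\circ\beta_1(w) = \beta_2(\dd\lambda(w))\circ\psi$ on $\q_1$; holomorphy of $\lambda^M$ gives $\dd\lambda_{\C}(\q_1)\subseteq\q_2$ as recalled in the text preceding the proposition.

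The main obstacle I anticipate is the "converse" direction, specifically justifying that holomorphy of $\lambda^{\bV}$ — a statement about all local holomorphic sections — actually forces the pointwise algebraic identity \eqref{intertwindif}. This requires knowing that $C^\infty(G_2,V_2)_{\rho_2,\beta_2}$ is rich enough near $m_0$: one needs local holomorphic sections of $\bV_2$ through any prescribed point of the fiber over $\lambda^M(m_0)$ with arbitrary admissible derivative, which follows from the existence part of Theorem \ref{compstructassoc} (the bundle structure is genuine, so holomorphic frames exist locally) but should be spelled out. A secondary but purely technical point is the chain rule $L_w(F\circ\lambda) = (L_{\dd\lambda(w)}F)\circ\lambda$ for $w\in\g_1$ and its complex-linear extension to $\q_1$; this is immediate from $\lambda(g\exp(tw)) = \lambda(g)\exp(t\dd\lambda(w))$ and linearity, so I would state it in one line rather than belabor it.
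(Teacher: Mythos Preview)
Your approach shares the key computational ingredient with the paper's proof---the chain rule $L_w(F\circ\lambda) = (L_{\dd\lambda(w)}F)\circ\lambda$ and the reduction to comparing $\beta_2(\dd\lambda(w))\circ\psi$ with $\psi\circ\beta_1(w)$---but the paper packages it differently, and that packaging sidesteps both of the obstacles you flag. Rather than pulling back \emph{sections} of $\bV_2$ (which forces you to invoke $\psi^{-1}$ and then to argue that there are ``enough'' holomorphic sections to isolate the pointwise identity), the paper works directly with local holomorphic \emph{frames} $F_i\colon q_{M_i}^{-1}(U_i)\to\GL(V_i)$, i.e.\ the data defining the trivializations in the proof of \cite{neeb13a}*{Thm.~1.6}. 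In these trivializations $\lambda^{\bV}$ reads
\[
(gH_1,v)\;\longmapsto\;\big(\lambda(g)H_2,\;F_2(\lambda(g))^{-1}\psi F_1(g)\,v\big),
\]
so holomorphy of $\lambda^{\bV}$ (given that $\lambda^M$ is holomorphic) is exactly holomorphy of the $B(V_1,V_2)$-valued map $gH_1\mapsto F_2(\lambda(g))^{-1}\psi F_1(g)$. The same chain rule you wrote, together with $L_wF_i=-\beta_i(w)F_i$, gives
\[
L_w\big((F_2\circ\lambda)^{-1}\psi F_1\big)
=(F_2\circ\lambda)^{-1}\big(\beta_2(\dd\lambda(w))\psi-\psi\beta_1(w)\big)F_1,
\]
and since $F_1$ and $(F_2\circ\lambda)^{-1}$ are pointwise invertible, this vanishes for all $w\in\q_1$ if and only if \eqref{intertwindif} holds. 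Both directions therefore fall out at once, with no invertibility hypothesis on $\psi$ and no need to realize prescribed $1$-jets by sections.

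Your argument is essentially fine when $\psi$ is an isomorphism (and in the paper's applications it always is, since $\psi$ is anti-unitary), but the proposition as stated allows an arbitrary intertwining operator, so the function $f=\psi^{-1}\circ\tilde s_2\circ\lambda$ need not exist. A push-forward variant (apply $\psi$ to $\tilde s_1$ and compare against the $(\lambda^M)^*\bV_2$-structure) would avoid $\psi^{-1}$, but you would still have to manufacture enough holomorphic $\tilde s_1$ for the converse. Passing to frames is the clean fix: it replaces ``enough sections'' by a single invertible object on each side.
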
 

\begin{proof} The condition 
$\dd\lambda_{\C}(\q_1)\subseteq \q_2$ holds 
if and only if the induced map $\lambda^M:M_1\to M_2$, $gH_1\mapsto \lambda(g)H_2$ is holomorphic. Assume that this is the case. 
We represent $\lambda^{\V}$ in local trivializations 
\[ \alpha_i:U_i\times V_i\to \V_i|_{U_i},\quad (gH_i,v)\mapsto 
[g,F_i(g)v]\quad\mbox{for}\quad i=1,2,\] 
where $U_i\subseteq M_i$ are open subsets and
$$F_i:q_M^{-1}(U_i)\to \GL(V_i)$$ satisfies 
\begin{align}
&F_i(gh)=\rho_i(h)^{-1}F_i(g) \quad\mbox{for}\quad g\in q_M^{-1}(U_i),h\in H_i\nonumber\\
&L_wF_i=-\beta_i (w)F_i \quad\mbox{for}\quad w\in \q_i\label{difext1}
\end{align}
(see the proof of \cite{neeb13a}*{Thm 1.6}).  Observe that 
$$
\xy
\xymatrix{
(gH_1,v) \ar@{|->}[d] \ar@{|->}[r] & [g,F_1(g)v]\ar@{|->}[d]^{\lambda^{\V}}
\\
(\lambda(g)H_2, F_2(\lambda (g))^{-1}\psi F_1(g)v)  & [\lambda(g),\psi(F_1(g)v)]\ar@{|->}[l]
}\endxy
$$
so that in these trivializations the map is given by
\[ (gH_1,v)\mapsto (\lambda(g)H_2, F_2(\lambda (g))^{-1}\psi F_1(g)v)
\quad\mbox{for}\quad gH_1\in U_1\cap (\lambda^M)^{-1}(U_2),v\in V.\]
Since the map $F_2:q_M^{-1}(U_2)\to \GL(V_2)$ satisfies \eqref{difext1} we have 
\begin{align}\label{adifext2}
L_w(F_2\circ\lambda)=L_{\dd\lambda(w)}(F_2)\circ\lambda=-\beta_2 (\dd\lambda(w))(F_2\circ\lambda).
\end{align}
The map $gH_1\mapsto \lambda(g)H_2$ is holomorphic and the map 
$F_2(\lambda (g))^{-1}\psi F_1(g) \: V_1\to V_2$ 
is complex linear. We have to prove that the map $gH_1\mapsto F_2(\lambda (g))^{-1}\psi F_1(g)\in B(V_1,V_2)$ for $gH_1\in U_1\cap (\lambda^M)^{-1}(U_2)$ is holomorphic, i.e., that the map $g\mapsto F_2(\lambda (g))^{-1}\psi F_1(g)$ is annihilated by the differential operators $L_w$ for $w\in\q$, 
if and only if \eqref{intertwindif} holds. Observe that 
\begin{align*}
L_w((F_2\circ\lambda)^{-1} (\psi F_1))=&L_w((F_2\circ\lambda)^{-1})(\psi F_1)+(F_2\circ\lambda)^{-1}L_w(\psi F_1)\\
=&-(F_2\circ\lambda)^{-1}L_w(F_2\circ\lambda)(F_2\circ\lambda)^{-1}(\psi F_1)+(F_2\circ\lambda)^{-1}L_w(\psi F_1)\\
=&-(F_2\circ\lambda)^{-1}(-\beta_2(\dd \lambda(w)))(F_2\circ\lambda)(F_2\circ\lambda)^{-1}(\psi F_1)\\
&+(F_2\circ\lambda)^{-1}(\psi L_w F_1)\qquad\qquad \qquad\qquad\qquad \qquad 
\mbox{ by \eqref{adifext2}}\\
=&-(F_2\circ\lambda)^{-1}(-\beta_2(\dd \lambda(w))(\psi F_1))\\
&+(F_2\circ\lambda)^{-1}\psi (-\beta_1(w) F_1)
\qquad \qquad\qquad\qquad \qquad \mbox{ by \eqref{difext1}}\\
=&(F_2\circ\lambda)^{-1}(\beta_2(\dd \lambda(w))\psi -\psi (\beta_1(w)))F_1.
\end{align*}
Therefore $L_w((F_2\circ\lambda)^{-1} (\psi F_1))=0$ if and only if $\beta_2(\dd \lambda(w))\psi -\psi (\beta_1(w))=0$ for ${w\in \q_1}$. 
\end{proof}

\begin{rem}
\rm{It is easy to check that the correspondence between morphisms of representations and holomorphic bundle morphisms is functorial. In particular one has for a 
smooth Banach $G$-module $W$ the following linear 
bijection, which is  a variant of Frobenius reciprocity: 
  \begin{equation}
    \label{eq:frob}
\Gamma \: \Hom_G(W,C^{\infty}_{\rho,\beta}(G,V))\to \Hom_{\rho,\beta}(W,V),\quad 
\Phi\mapsto {\rm ev}_e \circ \Phi.
  \end{equation}
Here the intertwining operators are assumed to be continuous and the space 
$C^{\infty}_{\rho,\beta}(G,V)$ carries the locally convex topology of pointwise 
convergence. Then the inverse of $\Gamma$ is simply given by 
$\Gamma^{-1}(\varphi)(w)(g)=\varphi(\pi(g^{-1})w)$.
}
\end{rem} 

\subsection{Opposite complex structure on associated bundles}\label{oppositebundle}

Given an associated vector bundle $q:\V=G\times_{H, \rho}V\to M=G/H$ endowed with a complex structure defined in Subsection \ref{compstructassoc} by a subalgebra $\q\subseteq \g_{\C}$ and an extension $\beta$ of $\dd\rho$, we can define a bundle 
$q:\V^{\rm op}\to  M^{\rm op}$ with the same underlying real spaces and opposite complex structures on the total and base spaces. 

\begin{prop}\label{propoppositebundle}
If $\bar{\rho}:H\to\U(V^{\rm op})$ is the complex conjugate representation of $\rho$ then the map $\bar{\beta}:\bar{\q}\to \gl(V^{\rm op})$  given by $\bar{\beta}(w):=\overline{\beta(\overline{w})}$ for $w\in\bar{\q}$ is an extension of of $\dd\bar{\rho}$. The bundle 
$q:\V^{\rm op}=G\times_{H,\bar{\rho}}V^{\rm op}\to  M^{\rm op}=G/H$ with the complex structure defined by the subalgebra $\bar{\q}\subseteq \g_{\C}$ and the extension $\bar{\beta}$ of $\dd\bar{\rho}$ has
the same holomorphic sections as $q:\V=G\times_{H,\rho}V\to M=G/H$, so it carries 
the opposite complex structure on total and base space, respectively.  
\end{prop}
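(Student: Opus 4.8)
The plan is to verify the two assertions of Proposition~\ref{propoppositebundle} separately: first that $(\bar\rho, \bar\beta)$ is a legitimate extension in the sense of \eqref{defext}, and second that the associated holomorphic bundle $\V^{\rm op}$ has the same holomorphic sections as $\V$, which by the uniqueness part of Theorem~\ref{compstructassoc} (applied with $M$ replaced by $M^{\rm op}$ and the subalgebra $\bar\q$) forces the complex structure on $\V^{\rm op}$ to be the opposite one. First I would recall that $\bar\q$ is indeed the subalgebra defining the complex structure on $M^{\rm op}$: this is precisely the content of the relations $\bar\q + \q = \g_\C$, $\bar\q \cap \q = \h_\C$, $\Ad_H(\bar\q) = \bar\q$ noted in Subsection~\ref{compstructassoc}, since conjugation on $\g_\C$ is the antilinear map identifying $T_{m_0}M^{\rm op}$ with the conjugate of $T_{m_0}M$. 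So the setup of Theorem~\ref{compstructassoc} applies to $(\bar\rho, \bar\beta)$ over $M^{\rm op} = G/H$.

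For the first assertion I would check the two conditions in \eqref{defext} for $\bar\beta$. Complex-conjugation $w\mapsto \bar w$ is an antilinear Lie algebra automorphism of $\g_\C$ fixing $\g$ pointwise, and $A\mapsto \bar A$ (conjugation of operators on $V^{\rm op}$, which as a real space equals $V$) is an antilinear algebra anti-automorphism of $\gl(V)$ in the appropriate sense; composing, $\bar\beta(w) = \overline{\beta(\bar w)}$ is complex-linear as a map $\bar\q \to \gl(V^{\rm op})$ and is a Lie algebra morphism. For $w \in \h_\C$ we have $\bar w \in \h_\C$ and $\beta|_{\h} = \dd\rho$ (real), so $\bar\beta(w) = \overline{\dd\rho(\bar w)} = \dd\bar\rho(w)$, giving $\bar\beta|_{\h} = \dd\bar\rho$. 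The equivariance $\bar\beta(\Ad(h)w) = \bar\rho(h)\bar\beta(w)\bar\rho(h)^{-1}$ follows by conjugating the equivariance relation for $\beta$ and using that $\Ad(h)$ commutes with conjugation (since $h \in H \subseteq G$) and $\bar\rho(h) = \overline{\rho(h)}$.

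For the second assertion I would compare the defining equation \eqref{defholsec} for holomorphic sections in the two bundles. A section of $\V^{\rm op}$ corresponds to $f \in C^\infty(G, V^{\rm op})_{\bar\rho}$, but since $\bar\rho(h)^{-1} = \overline{\rho(h)^{-1}}$ the equivariance condition \eqref{funsec} for $\bar\rho$ and $V^{\rm op}$ is, as a condition on the underlying real function $f \colon G \to V$, identical to \eqref{funsec} for $\rho$ and $V$; so $C^\infty(G,V^{\rm op})_{\bar\rho}$ and $C^\infty(G,V)_\rho$ coincide as sets of real-valued functions. The holomorphy condition for $\V^{\rm op}$ is $L_w f = -\bar\beta(w) f$ for $w \in \bar\q$. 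Writing $w = \bar u$ with $u \in \q$ and using the definition of the complex-linearized operators \eqref{defdifop} — noting that $L_{\bar u} = \overline{L_u}$ in the sense that $L_{x-iy} = L_x - iL_y$ — together with $\bar\beta(\bar u) = \overline{\beta(u)}$, this equation becomes the complex conjugate of $L_u f = -\beta(u) f$, hence is equivalent to it for all $u \in \q$. Thus the two spaces of holomorphic sections coincide. Finally, by the converse part of Theorem~\ref{compstructassoc}, the $G$-invariant holomorphic bundle structure on $\V$ (as a real bundle) with this prescribed space of holomorphic sections over the base $M^{\rm op}$ is unique, so it must be the bundle with opposite complex structures on total and base space; this identifies it with $\V^{\rm op}$ as claimed.

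The main obstacle I anticipate is purely bookkeeping: keeping straight which conjugations act on $\g_\C$, on $\gl(V)$, and on the left-invariant operators $L_w$, and verifying that $L_{\bar w} = \overline{L_w}$ with the correct sign conventions — the identity $(L_{x+iy}f)\bar{} = L_{x-iy}\bar f$ is what makes the holomorphy conditions match up, and getting the definition \eqref{defdifop} to interact correctly with conjugation of $V^{\rm op}$-valued functions is the one place a sign error could creep in. Everything else is a routine transport of the structure through an antilinear isomorphism.
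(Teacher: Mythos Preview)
Your proposal is correct and follows essentially the same route as the paper's proof: verify that $\bar\beta$ satisfies \eqref{defext}, then show $C^\infty(G,V)_{\rho,\beta}=C^\infty(G,V^{\rm op})_{\bar\rho,\bar\beta}$ by conjugating the holomorphy condition \eqref{defholsec}. One small slip: operator conjugation $A\mapsto\bar A$ is an antilinear algebra \emph{automorphism} (not anti-automorphism), since $\overline{AB}=\bar A\,\bar B$; your conclusion that $\bar\beta$ is a Lie algebra morphism is nonetheless correct.
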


\begin{proof}
The algebra $\bar{\q}$ defines the opposite complex structure on $M=G/H$. Since $\bar{\beta}$ is a complex linear Lie algebra homomorphism and since $$\dd\bar{\rho}(w)=\overline{\dd\rho(w)}=\overline{\beta(w)}=\overline{\beta(\overline{w})}=\bar{\beta}(w)\quad\mbox{for}\quad w\in \h$$  
we get $\dd\bar{\rho}=\bar{\beta}|_{\h}$. Also, 
\begin{align*}
\bar{\beta}(\Ad(h)w)&=\overline{\beta(\overline{\Ad(h)w})}=\bar{\beta}(\Ad(h)\overline{w})=\bar{\rho}(h)\overline{\beta(\overline{w})}\bar{\rho}(h)^{-1}=\bar{\rho}(h)\bar{\beta}(w)\bar{\rho}(h)^{-1}
\end{align*} 
for $h\in H$ and $w\in \bar{\q}$, so 
that $\bar{\beta}:\bar{\q}\to \gl(V^{\rm op})$ is an extension of $\dd\bar{\rho}$.

For a map $f:G\to V$ we write $\bar{f}:G\to V^{\rm op}$ for the same map to $V$ with the opposite complex structure. Then
$$\bar{f}(gh)=\bar{\rho}(h)^{-1}\bar{f}(g)\quad\mbox{for}\quad g\in G,h\in H,$$
and
\[ L_w\bar{f}=\overline{L_{\overline{w}}f}
=-\overline{\beta(\overline{w})f}=-\overline{\beta(\overline{w})}\bar{f}
=-\bar{\beta}(w)\bar{f}\quad \mbox{ for }\quad w\in\bar{\q}.\]
Therefore $C^{\infty}(G,V)_{\rho,\beta}= C^{\infty}(G,V^{\rm op})_{\bar{\rho},\bar{\beta}}$, and the bundles $q:\V\to M$ and $q:\V^{\rm op}\to  M^{\rm op}$ 
have the same holomorphic sections. As both 
bundles have the same sections and $ M^{\rm op}$ and 
$V^{\rm op}$ carry the complex structure opposite to the one $M$ and $V$, 
respectively, the bundle $\V^{\rm op}$ carries the complex structure opposite to $\V$. 
\end{proof}

\subsection{Anti-holomorphic automorphisms of equivariant principal bundles}

Assume that there is an automorphism $\sigma$ of $G$ which stabilizes $H$ and such that the complex linear extension $\dd\sigma_{\C}:\g_{\C}\to\g_{\C}$ of  $\dd\sigma$ satisfies $\dd\sigma_{\C}(\q)=\bar{\q}$, i.e., the induced map $\sigma^M$ on $M=G/H$ defined by $gH\mapsto \sigma(g)H$ is anti-holomorphic. The condition 
$\bar{\rho}\circ\sigma\simeq\rho$, 
where $\bar{\rho}:H\to \U(V^{\rm op})$ is the complex conjugate representation, 
holds if and only if there is an anti-unitary operator $\psi:V\to V$ satisfying 
\begin{align}
\rho(\sigma(h))\circ\psi=\psi\circ\rho (h) \quad\mbox{for}\quad h\in H.\label{dualrep}
\end{align}
This condition implies that the endomorphism of $\V$ given by 
$$\sigma^{\V}:\V\to \V,\quad [g,v]\mapsto [\sigma (g),\psi (v)]$$
is well-defined. It is a lift of the anti-holomorphic map $\sigma^M:M\to M$, $gH\mapsto \sigma(g)H$ and it is anti-unitary on each fiber.

If the antiunitary map $\psi \: V \to V$ also satisfies 
\begin{align}
\beta(\overline{\dd\sigma(w)})\circ\psi=\psi\circ\beta (w)\label{dualrepdif}
\quad \mbox{ for } \quad w\in \q, 
\end{align}
then $(\sigma,\psi)$ is a morphism between $(\rho,\beta)$ and $(\bar{\rho},\bar{\beta})$. If we consider the endomorphism $(\sigma^{\V},\sigma^M)$ of $\V$ as a morphism 
\[ \xy
\xymatrix{
\V \ar[d]_{q} \ar[r]^{\sigma^{\V}} & \V^{\rm op} \ar[d]^{q}
\\
M \ar[r]^{\sigma^M} &  M^{\rm op}, 
}\endxy
\] 
then Propositions \ref{holoend} and \ref{propoppositebundle} lead to:

\begin{prop}\label{aholoend}
If $(\sigma,\psi)$ intertwines the representations $(\rho,\beta)$ and 
$(\bar{\rho},\bar{\beta})$, then \break 
${\sigma^M:M\to M}$, $gH\mapsto \sigma(g)H$ and $\sigma^{\V}:\V\to \V$, $[g,v]\mapsto [\sigma (g),\psi (v)]$ are anti-holomorphic.
\end{prop}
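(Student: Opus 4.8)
The plan is to recognize Proposition~\ref{aholoend} as a direct application of the machinery set up in Sections~\ref{assocbundles} and \ref{oppositebundle}, so the proof should be short and essentially a bookkeeping argument tying together Propositions~\ref{holoend} and \ref{propoppositebundle}. First I would observe that the hypothesis that $(\sigma,\psi)$ intertwines $(\rho,\beta)$ and $(\bar\rho,\bar\beta)$ means precisely that equations \eqref{dualrep} and \eqref{dualrepdif} hold, and that $\dd\sigma_\C(\q) = \bar\q$ (which is part of what it means for $\sigma$ to induce a morphism landing in the opposite structure). Then I would view the endomorphism $(\sigma^\V,\sigma^M)$ not as a self-map of $\V$ but as a bundle morphism $\sigma^\V \: \V \to \V^{\rm op}$ covering $\sigma^M \: M \to M^{\rm op}$, exactly as indicated in the commutative square preceding the statement.

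The key step is then to apply Proposition~\ref{holoend} with the roles $G_1 = G_2 = G$, $H_1 = H_2 = H$, $\lambda = \sigma$, $\psi$ the given anti-unitary operator, but with \emph{target data} $(\rho_2,\beta_2) = (\bar\rho,\bar\beta)$ on $V_2 = V^{\rm op}$ and $\q_2 = \bar\q$, while $(\rho_1,\beta_1) = (\rho,\beta)$ and $\q_1 = \q$. By Proposition~\ref{propoppositebundle}, the bundle $G\times_{H,\bar\rho}V^{\rm op} \to G/H$ with complex structure given by $(\bar\q,\bar\beta)$ is exactly $\V^{\rm op} \to M^{\rm op}$. The intertwining conditions \eqref{intertwin} and \eqref{intertwindif} for this choice of data translate, after unraveling the definitions $\bar\beta(w) = \overline{\beta(\overline w)}$ and noting $\dd\sigma$ carries $\q$ to $\bar\q$, into precisely \eqref{dualrep} and \eqref{dualrepdif}; so Proposition~\ref{holoend} applies and yields that $\sigma^M \: M \to M^{\rm op}$, $gH\mapsto \sigma(g)H$, and $\sigma^\V \: \V \to \V^{\rm op}$, $[g,v]\mapsto[\sigma(g),\psi(v)]$, are holomorphic. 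Finally, since $M^{\rm op}$ and $V^{\rm op}$ carry the complex structures opposite to those of $M$ and $V$, a map into $M^{\rm op}$ (resp.\ a bundle map into $\V^{\rm op}$) that is holomorphic is by definition anti-holomorphic as a map into $M$ (resp.\ $\V$), which is the conclusion.

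The only genuine obstacle is verifying the translation of the hypotheses: one must check carefully that ``$(\sigma,\psi)$ intertwines $(\rho,\beta)$ and $(\bar\rho,\bar\beta)$'' is the same statement as ``$(\sigma,\psi)$ is a morphism of $(H,\fq)$-representations from $(\rho,\beta)$ on $V$ to $(\bar\rho,\bar\beta)$ on $V^{\rm op}$'' in the sense required by Proposition~\ref{holoend} — in particular that condition \eqref{intertwindif} with $\beta_2 = \bar\beta$, $\dd\lambda = \dd\sigma$ reads $\bar\beta(\dd\sigma(w))\circ\psi = \psi\circ\beta(w)$, and that by definition of $\bar\beta$ this is $\overline{\beta(\overline{\dd\sigma(w)})}\circ\psi = \psi\circ\beta(w)$; since $\psi$ is anti-unitary one has to be attentive to where conjugations sit, and one uses $\overline{\dd\sigma(w)} = \dd\sigma_\C(\overline w)$ together with $\dd\sigma_\C(\bar\q) = \q$ so that $\overline{\dd\sigma(w)}$ ranges over $\q$ as $w$ ranges over $\q$ — giving exactly \eqref{dualrepdif}. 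Similarly \eqref{intertwin} with $\rho_2 = \bar\rho$ becomes $\bar\rho(\sigma(h))\circ\psi = \psi\circ\rho(h)$, which is \eqref{dualrep} after recalling $\bar\rho(h)$ and $\rho(h)$ agree as set maps and commuting $\psi$ past the conjugation. Once this dictionary is in place the proof is immediate, so I would state it in two or three lines invoking Propositions~\ref{holoend} and \ref{propoppositebundle}.
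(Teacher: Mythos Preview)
Your proposal is correct and matches the paper's own argument exactly: the paper simply observes that, regarding $(\sigma^\V,\sigma^M)$ as a bundle morphism $\V\to\V^{\rm op}$ over $\sigma^M\colon M\to M^{\rm op}$, Propositions~\ref{holoend} and~\ref{propoppositebundle} immediately give the result. Your additional care in unraveling how \eqref{intertwin}--\eqref{intertwindif} with target data $(\bar\rho,\bar\beta)$ on $V^{\rm op}$ translate into \eqref{dualrep}--\eqref{dualrepdif} is a welcome elaboration of what the paper leaves implicit.
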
 

\section{A Propagation Theorem for associated bundles}\label{propagassocbund}
Based on the constructions of the previous section we give a formulation of the propagation theorem for infinite-dimensional associated holomorphic vector bundles, which is best suited for specific examples.

Let $K$ be a subgroup of the connected Banach--Lie group $G$. For $g\in G$, the isotropy group of $m=gH$ for the action of $K$ on $M=G/H$ is $K_m=K\cap G_m=K\cap gHg^{-1}$. We define a homomorphism 
$$c_{g^{-1}}:K_{gH}\to H,\quad k\mapsto g^{-1}kg$$
with range $K_{(g)}:=c_{g^{-1}}(K_{gH})=g^{-1}Kg\cap H\subseteq H$.
If $\sigma$ is an automorphism of $G$ such that $\sigma(K)=K$ and $\sigma(H)=H$, for $g\in G^{\sigma}$ we get $\sigma(K_{(g)})=K_{(g)}$.

\begin{thm}\textsc{(Third Propagation Theorem).}\label{multfreegroup}
Let $\V=G\times_{H,\rho} V\to M$ be a $G$-equivariant holomorphic vector bundle with a complex subalgebra $\q$ and an extension \break $\beta:\q\to \gl(V)$ of $\dd\rho$ defining a complex structure on it. Let $K\subseteq G$ be a subgroup 
and $\sigma$ be an automorphism of the Banach--Lie group $G$ stabilizing $K$ and $H$ such that $(\sigma,\psi)$ is a morphism between the representations $(\rho,\beta)$ and $(\bar{\rho},\bar{\beta})$. Suppose that there is a subset $B$ of $G^{\sigma}$ such that:
\begin{itemize}
\item[\rm{(B)}]\label{cond31} The closure of the subset $KBH\subeq G$ 
has interior points. 
\item[\rm{(F)}]\label{cond32} For every $b\in B$ the anti-unitary operator $\psi$ commutes with $\rho(K_{(b)})'_h$.
\end{itemize}
Then any unitary representation $(\pi,\H)$ of $K$ 
realized in $\Gamma (\V)$ is multiplicity free. 
\end{thm}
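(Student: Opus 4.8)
The plan is to reduce this to the Second Propagation Theorem~\ref{multfreevis} by producing the data $(S,\sigma^M)$ of a weakly visible $K$-action on $M$ together with an anti-holomorphic bundle endomorphism $(\sigma^{\V},\sigma^M)$ compatible with the $K$-action and an automorphism $\sigma^K$ of $K$. First I would take $\sigma^K := \sigma|_K$, which is an automorphism of $K$ since $\sigma(K)=K$. By Proposition~\ref{aholoend}, the hypothesis that $(\sigma,\psi)$ is a morphism between $(\rho,\beta)$ and $(\bar\rho,\bar\beta)$ guarantees that $\sigma^M\colon M\to M$, $gH\mapsto\sigma(g)H$, is an anti-holomorphic diffeomorphism and that $\sigma^{\V}\colon\V\to\V$, $[g,v]\mapsto[\sigma(g),\psi(v)]$, is an anti-holomorphic bundle endomorphism which is anti-unitary on each fiber and lifts $\sigma^M$. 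Compatibility of $(\sigma^{\V},\sigma^M)$ with the $K$-action and $\sigma^K$ in the sense of \eqref{resp} is the identity $\gamma_{\sigma(k)}=\sigma^{\V}\circ\gamma_k\circ(\sigma^{\V})^{-1}$ for $k\in K$, which follows directly from the definitions $\gamma_k([g,v])=[kg,v]$ and $\sigma^{\V}([g,v])=[\sigma(g),\psi(v)]$ once one checks $[\sigma(k)\sigma(g),\psi(v)]=\sigma^{\V}(\gamma_k([g,v]))$.

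Next I would define the slice $S := \{\, bH : b\in B\,\}\subseteq M$, i.e.\ $S=q_M(B)$. The point is to verify (WV1)--(WV3) of Definition~\ref{defweakvis} for the $K$-action on $M$. For (WV1): the orbit $K.S$ equals $q_M(KB) = q_M(KBH)$, and since $q_M\colon G\to G/H$ is an open submersion, the closure of $KBH$ having interior points in $G$ forces the closure of $K.S$ to have interior points in $M$; hence $D:=K.S$ is dense in an open subset of~$M$. For (WV2): if $b\in B\subseteq G^{\sigma}$ then $\sigma(b)=b$, so $\sigma^M(bH)=\sigma(b)H=bH$, i.e.\ $\sigma^M|_S=\id$. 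For (WV3): given $k\in K$ and $b\in B$, compatibility~\eqref{compat} of $\sigma^M$ with $\sigma^K=\sigma|_K$ gives $\sigma^M(k.bH)=\sigma(k).\sigma^M(bH)=\sigma(k).bH\in K.bH$ since $\sigma(k)\in K$; thus $\sigma^M$ preserves every $K$-orbit through~$S$.

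It remains to check condition~(F) of Theorem~\ref{multfreevis}: for every $s\in S$ the anti-unitary $\sigma^{\V}_s$ commutes with $\rho_s(K_s)'_h$. Write $s=bH$ with $b\in B$. The isotropy group is $K_s=K\cap bHb^{-1}$, and conjugation $c_{b^{-1}}\colon K_{bH}\to H$, $k\mapsto b^{-1}kb$, has image $K_{(b)}=b^{-1}Kb\cap H$. Using the trivialization $\iota_b\colon V\to\V_{bH}$, $v\mapsto[b,v]$, one computes $\iota_b^{-1}\circ\rho_{bH}(k)\circ\iota_b=\rho(c_{b^{-1}}(k))$ for $k\in K_{bH}$, so $\iota_b$ intertwines the $K_{bH}$-representation on $\V_{bH}$ with the $K_{(b)}$-representation $\rho|_{K_{(b)}}$ on $V$; consequently $\iota_b^{-1}(\rho_{bH}(K_{bH})'_h)\iota_b=\rho(K_{(b)})'_h$. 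On the other hand, since $\sigma(b)=b$, the fiber map $\sigma^{\V}_{bH}\colon\V_{bH}\to\V_{bH}$ is exactly $\iota_b\circ\psi\circ\iota_b^{-1}$, i.e.\ $\iota_b^{-1}\circ\sigma^{\V}_{bH}\circ\iota_b=\psi$. Therefore $\sigma^{\V}_{bH}$ commutes with $\rho_{bH}(K_{bH})'_h$ if and only if $\psi$ commutes with $\rho(K_{(b)})'_h$, which is precisely hypothesis~(F). Having verified (B) and (F) of Theorem~\ref{multfreevis} for the $K$-action on $\V$, that theorem yields that every unitary representation $(\pi,\H)$ of $K$ realized in $\Gamma(\V)$ is multiplicity-free.

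The main obstacle I anticipate is the bookkeeping in the last paragraph: carefully matching the two conjugation isomorphisms --- the base-point change $\iota_b$ identifying $\V_{bH}$ with $V$, and the resulting identification of the isotropy representation $\rho_{bH}|_{K_{bH}}$ with $\rho|_{K_{(b)}}$ --- so that the single condition ``$\psi$ commutes with $\rho(K_{(b)})'_h$'' transports correctly to ``$\sigma^{\V}_s$ commutes with $\rho_s(K_s)'_h$''. One must be attentive that $\sigma(b)=b$ is what makes $\sigma^{\V}_{bH}$ act through $\psi$ in the trivialization given by $b$ itself (rather than through some twisted version), and that $\psi$ genuinely normalizes the commutant in question; everything else is a routine unwinding of the associated-bundle formalism from Section~\ref{assocbundles}.
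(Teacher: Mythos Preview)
Your proposal is correct and follows essentially the same route as the paper's proof: both reduce to Theorem~\ref{multfreevis} by invoking Proposition~\ref{aholoend} for the anti-holomorphicity of $(\sigma^{\bV},\sigma^M)$, setting $S=q_M(B)$, verifying (WV1)--(WV3) exactly as you do, and then transporting condition~(F) via the trivialization $\iota_b$ and the identity $\sigma^{\bV}_{bH}=\iota_b\psi\iota_b^{-1}$ (which relies on $\sigma(b)=b$). The bookkeeping you flag as the main obstacle is handled in the paper by the same commutative-diagram argument you sketch.
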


\begin{proof}
Since $(\sigma,\psi)$ is a morphism between the representations $(\rho,\beta)$ and $(\bar{\rho},\bar{\beta})$, by Proposition \ref{aholoend} the bundle endomorphisms $(\sigma^{\V},\sigma^M)$ given by 
$$\sigma^{\V}([g,v])=[\sigma^G(g),\psi(v)]\quad\mbox{and}\quad \sigma^M(gH)
=\sigma(g)H \quad\mbox{for}\quad g\in G, v\in V$$
is antiholomorphic.

The antiholomorphic vector bundle endomorphim $(\sigma^{\V},\sigma^M)$ is compatible with the $K$-action and with the automorphism $\sigma^K:=\sigma|_K$ of $K$ since for $k\in K$, $g\in G$ and $v\in V$
\[ \sigma^M(k.gH)=\sigma^M(kg).H=\sigma^K(k).\sigma(g).H=\sigma^K(k).\sigma^M(gH),\] 
and
\begin{align*}
(\gamma_{\sigma^K(k)}\circ\sigma^{\V})([g,v])&=\gamma_{\sigma^K(k)}([\sigma(g),\psi(v)])=[\sigma^K(k)\sigma(g),\psi(v)]\\
&=([\sigma(kg),\psi(v)])=\sigma^{\V}([kg,v])=(\sigma^{\V}\circ\gamma_k)([g,v]).
\end{align*}

Since $KBH$ is dense in an open subset of $G$, 
the $K$-invariant subset $D:=KBH/H
\subseteq G/H$ is dense in an open subset of $M=G/H$. 
We set 
\[ S:=BH/H\subseteq D,\] 
so that $K.S=D$. Then $\sigma^M|_S=\id$ because $B\subseteq G^{\sigma^G}$. For $s\in S$ we have
$$\sigma^M(K.s)=\sigma^K(K).\sigma^M(s)=K.s.$$
Therefore the $K$-action on the base space is $(S,\sigma^M)$-weakly visible and 
condition (B) in Theorem \ref{multfreevis} is satisfied. 

Let $m=gH$ for $g\in G$. Via the bijection $\iota_g:V\xrightarrow{\sim}\V_{gH}$, $v\mapsto [g,v]$ and the group homomorphism $c_{g^{-1}}:K_{gH}\to K_{(g)}$ the 
isotropy representation $\rho_m:K_m\to \U(\V_m)$ factors through the representation $\rho|_{K_{(g)}}:K_{(g)}\to \U(V)$, namely  
\[ \xy
\xymatrix{
V \ar[d]_{\rho(c_{g^{-1}}(k))} \ar[r]^{\sim}_{\iota_g} & \V_m \ar[d]^{\rho_m(k)}
\\
V \ar[r]^{\sim}_{\iota_g} & \V_m, 
}\endxy
\] 
and thus 
\[ \iota_g\rho(K_{(g)})\iota_g^{-1}=\rho_s(K_m).\] 

Let $s\in S$ and $b\in B$ with $s=bH$. Observe that 
\begin{align*}
\iota_{\sigma (g)}\circ \psi=\sigma^{\V}_{gH}\circ\iota_g\quad\mbox{for}\quad g\in G,
\end{align*}
and $\sigma(b)=b$, so that $\sigma_s^{\V}=\iota_b\psi\iota_b^{-1}$. Since $\psi$ commutes with $\rho(K_{(b)})'_h$ we conclude that 
$\sigma_s^{\V}=\iota_b\psi\iota_b^{-1}$ commutes with $(\iota_b\rho(K_{(b)})\iota_b^{-1})'_h=\rho_s(K_s)'_h$
so that condition (F) in Theorem~\ref{multfreevis} holds as well.
\end{proof}

\begin{rem}\label{samesubgroup}
\rm{
Note that 
\[ N:=Z_{K\cap H}(B)=\{k\in K\cap H:kb=bk \quad\mbox{for}\quad b\in B\}\subseteq b^{-1}Kb\cap H=K_{(b)} \quad \mbox{  for } \quad b\in B.\]
 Since the commutant of $\rho(N)'_h$ is contained 
in the commutant of $\rho(K_{(b)})'_h$, 
we obtain a corollary of Theorem \ref{multfreegroup} by replacing condition (F) by: 
\begin{itemize}
\item[\rm{(F')}] The anti-unitary operator $\psi$ commutes with $\rho(N)'_h$ for 
$N=Z_{K\cap H}(B)$. 
\end{itemize}
}
\end{rem}

\begin{rem}
\rm{
In the previous formulation of the theorem, if the subset $B$ is bigger, condition (B) becomes weaker and condition (F') stronger.
}
\end{rem}

\begin{prop}\label{multfreedual}
If $\bar{\rho}\circ\sigma\simeq\rho$ as representations of $H$, with the isomorphism given by an anti-unitary $\psi$, the restriction $\rho|_{K^1}$ to a subgroup $K^1\subseteq H$ is multiplicity free with irreducible decomposition $\rho|_{K^1}\simeq\bigoplus_{i\in I}\nu^{(i)}$ on $V=\bigoplus_{i\in I} V^{(i)}$ and $\bar{\nu}^{(i)}\circ\sigma\simeq\nu^{(i)}$ as representations of $K^1$, then $\psi$ commutes with $\rho(K^1)'_h$.
\end{prop}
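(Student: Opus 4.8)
The plan is to deduce the statement from the discrete commutation criterion, Lemma~\ref{conjcommutedisc}, applied with $\M:=\rho(K^1)'$. Because $\rho|_{K^1}$ is multiplicity free, $\M$ is commutative; $V=\bigoplus_{i\in I}V^{(i)}$ is the corresponding orthogonal decomposition into pairwise inequivalent irreducibles; and since $\rho(K^1)\subseteq\rho(K^1)''=\M'$ already acts irreducibly on each $V^{(i)}$, the hypotheses of Lemma~\ref{conjcommutedisc} are satisfied. If the anti-unitary $\psi$ happened to preserve every summand $V^{(i)}$ we would be done at once. In general it need not, and this is the one real point of the argument; I would handle it by comparing $\psi$ with a ``model'' anti-unitary $\Psi$ that does respect the decomposition, absorbing the discrepancy into the commutative algebra $\M$.

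First I would produce $\Psi$. By the discussion preceding Proposition~\ref{aholoend}, applied with $(K^1,\nu^{(i)},V^{(i)})$ in place of $(H,\rho,V)$, the hypothesis $\bar{\nu}^{(i)}\circ\sigma\simeq\nu^{(i)}$ yields, for each $i\in I$, an anti-unitary operator $\psi_i\colon V^{(i)}\to V^{(i)}$ with $\nu^{(i)}(\sigma(k))\psi_i=\psi_i\nu^{(i)}(k)$ for $k\in K^1$. Put $\Psi:=\bigoplus_{i\in I}\psi_i$, an anti-unitary operator on $V$ with $\Psi(V^{(i)})=V^{(i)}$ for all $i$; reading the last relations off blockwise gives
\[
\rho(\sigma(k))\,\Psi=\Psi\,\rho(k)\qquad(k\in K^1).
\]
Since $\Psi$ preserves each $V^{(i)}$, Lemma~\ref{conjcommutedisc} shows that $\Psi$ commutes with $\M_h=\rho(K^1)'_h$. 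Next set $U:=\psi^{-1}\Psi$, which is unitary, and compute for $k\in K^1\subseteq H$
\[
U\rho(k)U^{-1}=\psi^{-1}\Psi\rho(k)\Psi^{-1}\psi=\psi^{-1}\rho(\sigma(k))\psi=\rho(k),
\]
using the displayed identity together with the defining relation $\rho(\sigma(h))\psi=\psi\rho(h)$ of $\psi$. Hence $U\in\rho(K^1)'=\M$.

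Finally, since $\M$ is commutative and $U^{-1}\in\M$, every $A\in\rho(K^1)'_h\subseteq\M$ satisfies $U^{-1}A=AU^{-1}$, so
\[
\psi A=\Psi U^{-1}A=\Psi AU^{-1}=A\Psi U^{-1}=A\psi,
\]
the middle equality being $\Psi A=A\Psi$ from the previous step. Thus $\psi$ commutes with $\rho(K^1)'_h$, which is the assertion. The only obstacle, as flagged above, is that $\psi$ may permute the isotypic components $V^{(i)}$; the comparison with $\Psi$ reduces this to the harmless observation that the correcting unitary $U$ lies in the \emph{commutative} algebra $\rho(K^1)'$ and hence automatically commutes with its self-adjoint part. (One could instead verify $\psi(V^{(i)})=V^{(i)}$ directly: $\psi$ carries $V^{(i)}$ onto a subrepresentation equivalent to $\overline{\nu^{(i)}\circ\sigma^{-1}}$, which one identifies with $\nu^{(i)}$ by conjugating and reparametrising the hypothesis $\bar{\nu}^{(i)}\circ\sigma\simeq\nu^{(i)}$; the $\Psi$--$U$ comparison simply bypasses this bookkeeping.)
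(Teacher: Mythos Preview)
Your proof is correct and follows essentially the same approach as the paper: both construct the block-diagonal anti-unitary $\bigoplus_i\psi_i$ and the correcting unitary $U=\psi^{-1}\bigoplus_i\psi_i\in\rho(K^1)'$. The only cosmetic difference is in the endgame: the paper uses that a unitary in the commutant of a multiplicity-free representation preserves each isotypic block, hence $\psi(V^{(i)})=V^{(i)}$, and then applies Lemma~\ref{conjcommutedisc} to $\psi$ directly; you instead apply Lemma~\ref{conjcommutedisc} to $\bigoplus_i\psi_i$ first and use commutativity of $\rho(K^1)'$ to pass to $\psi$---which is precisely the alternative you sketch parenthetically at the end.
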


\begin{proof}
Since $\bar{\nu}^{(i)}\circ\sigma\simeq\nu^{(i)}$ as representations of $K^1$, there are anti-unitary $K^1$-intertwining operators $\psi_i:V^{(i)}\to V^{(i)}$ for 
$i\in I$. Then the 
unitary operator $\Psi := \psi^{-1}\circ \bigoplus_{i\in I}\psi_i$ commutes with the representation $\rho|_{K^1}\simeq\bigoplus_{j\in I} \nu^{(j)}$ which is multiplicity free. 
Therefore $\Psi(V^{j})= V^{j}$ for every $j\in I$, which implies that 
$\psi(V^{j})= V^{j}$ for $j\in I$. 
By Lemma \ref{conjcommutedisc}, this means that the anti-unitary operator $\psi$ commutes with $\rho(K^1)'_h$. 
\end{proof}

\begin{rem}\label{cond2'}
\rm{With Proposition \ref{multfreedual}, we obtain a special case of 
Theorem \ref{multfreegroup} where 
the representations on the fibers are discretely decomposable by changing condition (F) to:
\begin{itemize}
\item[\rm{(F'')}] For every $b\in B$ the restriction of $\rho$ to a subgroup $K^1_{(b)}\subseteq K_{(b)}$ is multiplicity free with irreducible decomposition $\rho|_{K^1_{(b)}}\simeq\bigoplus_{i\in I}\nu^{(i)}_b$ on $V=\bigoplus_{i\in I} V^{(i)}_b$ and $\bar{\nu}^{(i)}_b\circ\sigma\simeq\nu^{(i)}_b$ as representations of $K^1_{(b)}$.
\end{itemize}
}
\end{rem}

By Proposition \ref{multfreedual} and Remark \ref{samesubgroup} we obtain the 
following reformulation of Theorem~\ref{multfreegroup}. 


\begin{thm}\label{multfreegroup2}
Let $\V=G\times_{H,\rho} V\to M$ be a $G$-equivariant holomorphic vector bundle with a complex subalgebra $\q$ and an extension $\beta:\q\to \gl(V)$ of $\dd\rho$ defining a complex structure on it. Let $K\subseteq G$ be a subgroup 
and $\sigma$ be an automorphism of the Banach--Lie group $G$ stabilizing $K$ and $H$ such that $(\sigma,\psi)$ is a morphism between the representations $(\rho,\beta)$ and $(\bar{\rho},\bar{\beta})$. Suppose also that there is a subset $B$ of $G^{\sigma}$ such that:
\begin{itemize}
\item[\rm{(B)}]\label{cond41} The subset $KBH$ of $G$ satisfies $\overline{KBH}^\mathrm{o}\neq \emptyset$.
\item[\rm{(F)}]\label{cond42} For $N=Z_{K\cap H}(B)$ the restriction $\rho|_N$ is multiplicity free with irreducible decomposition $\rho|_N\simeq\bigoplus_{i\in I} \nu^{(i)}$ on $V=\bigoplus_{i\in I} V^{(i)}$ and $\bar{\nu}^{(i)}\circ\sigma\simeq\nu^{(i)}$ for $i\in I$ as representations of $N$.
\end{itemize} 
Then any unitary representation $(\pi,H)$ of $K$ 
is realized in $\Gamma (\V)$ is multiplicity free. 
\end{thm}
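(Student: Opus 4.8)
The plan is to derive Theorem~\ref{multfreegroup2} as a direct corollary of Theorem~\ref{multfreegroup} (the Third Propagation Theorem), by showing that the hypotheses here are a concrete packaging of the hypotheses there. The setup data is identical --- the same bundle $\V=G\times_{H,\rho}V$, subgroup $K$, automorphism $\sigma$, morphism $(\sigma,\psi)$, and subset $B\subseteq G^{\sigma}$ --- and condition (B) is literally the same in both statements. So the only thing to verify is that condition (F) of the present theorem implies condition (F) of Theorem~\ref{multfreegroup}, namely that for every $b\in B$ the anti-unitary $\psi$ commutes with $\rho(K_{(b)})'_h$.

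First I would invoke Remark~\ref{samesubgroup}: it records that $N:=Z_{K\cap H}(B)\subseteq K_{(b)}$ for every $b\in B$, whence $\rho(N)\supseteq\rho(K_{(b)})$ as subsets of $\U(V)$ is false in general, but the inclusion of groups gives $\rho(K_{(b)})'\subseteq\rho(N)'$, so $\rho(K_{(b)})'_h\subseteq\rho(N)'_h$. Consequently, if $\psi$ commutes with $\rho(N)'_h$ then a fortiori it commutes with $\rho(K_{(b)})'_h$ for each $b\in B$. This reduces the task to verifying that $\psi$ commutes with $\rho(N)'_h$. Second, I would apply Proposition~\ref{multfreedual} with $K^1:=N$: that proposition says precisely that if $\bar\rho\circ\sigma\simeq\rho$ via the anti-unitary $\psi$, if $\rho|_N$ is multiplicity free with irreducible decomposition $\rho|_N\simeq\bigoplus_{i\in I}\nu^{(i)}$ on $V=\bigoplus_{i\in I}V^{(i)}$, and if $\bar\nu^{(i)}\circ\sigma\simeq\nu^{(i)}$ as representations of $N$ for each $i\in I$, then $\psi$ commutes with $\rho(N)'_h$. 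These three input conditions are exactly what is assumed in (F) of Theorem~\ref{multfreegroup2} (the relation $\bar\rho\circ\sigma\simeq\rho$ being implied by the hypothesis that $(\sigma,\psi)$ is a morphism between $(\rho,\beta)$ and $(\bar\rho,\bar\beta)$, as noted in Subsection~4.6 where \eqref{dualrep} is derived).

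Putting these together: condition (F) of Theorem~\ref{multfreegroup2} yields, via Proposition~\ref{multfreedual}, that $\psi\in(\rho(N)'_h)'^{\R}$, and via the inclusion $\rho(K_{(b)})'_h\subseteq\rho(N)'_h$ from Remark~\ref{samesubgroup} this gives $\psi\in(\rho(K_{(b)})'_h)'^{\R}$ for every $b\in B$, which is condition (F) of Theorem~\ref{multfreegroup}. Since condition (B) transfers verbatim and all remaining hypotheses coincide, Theorem~\ref{multfreegroup} applies and every unitary representation $(\pi,\H)$ of $K$ realized in $\Gamma(\V)$ is multiplicity free, as claimed.

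There is essentially no obstacle: the proof is a two-line deduction chaining Remark~\ref{samesubgroup} and Proposition~\ref{multfreedual} into Theorem~\ref{multfreegroup}. The only point requiring a moment's care is the direction of the commutant inclusion --- one must note that a \emph{larger} group $K_{(b)}$ has a \emph{smaller} commutant, so that commuting with the (larger) hermitian commutant of the smaller group $N$ is the \emph{stronger} statement, which is what one wants; this is exactly the monotonicity already flagged in the final Remark of the section ("if the subset $B$ is bigger, condition (B) becomes weaker and condition (F') stronger"), here applied at the level of the subgroups $N\subseteq K_{(b)}$ rather than of $B$.
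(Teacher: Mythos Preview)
Your proposal is correct and matches the paper's own argument exactly: the paper states Theorem~\ref{multfreegroup2} as the reformulation of Theorem~\ref{multfreegroup} obtained by chaining Proposition~\ref{multfreedual} with Remark~\ref{samesubgroup}, and that is precisely what you do. The only cosmetic point is that your aside ``whence $\rho(N)\supseteq\rho(K_{(b)})$ \ldots\ is false in general'' is phrased confusingly; the clean statement is simply $N\subseteq K_{(b)}\Rightarrow \rho(K_{(b)})'\subseteq\rho(N)'$, which you do state correctly immediately afterwards.
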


\section{Examples of weakly visible actions and propagation theorems}\label{exampleswv}

In this final section we discuss various concrete situations in the context 
of operator algebras and Hilbert spaces which fit into the 
setting developped above. 
In particular we exhibit several kinds of 
weakly visible actions on infinite dimensional spaces 
and  state some corresponding propagation theorems.

\subsection{Propagation theorem for linear base spaces}\label{proplinear}

Let $\H$ be a complex Hilbert space and let $\Fo(\H)\subseteq \O(\H)$ be the Fock space on $\H$ with reproducing kernel $K(x,y)=e^{\langle x,y\rangle}$ for $x,y\in \H$. Let 
$(e_j)_{j\in J}$ be an orthonormal basis of $\H$. 
Then the polynomial functions $p_m:\H\to\C$
\[ p_m(z)=\frac{1}{\sqrt{m!}}\prod_{j\in J}\langle z,e_j\rangle^{m_j}
\quad\mbox{for}\quad m\in \N_{0}^{(J)}\] 
form an orthonormal basis of $\Fo(\H)$, where $\N_{0}^{(J)}$ 
is the set of finitely supported tuples indexed by $J$ and $m!=\prod_{j\in J}m_j!$. 
The Hilbert subspace of $\Fo(\H)$ of homogeneous functions of degree $n\in\N_0$ 
has reproducing kernel $K_n(x,y)=\frac{1}{n!}\langle x,y\rangle$ for $x,y\in \H$.

Let $\delta:H\to\U(\H)$ be a norm-continuous representation of a Banach--Lie group $H$ and let $G:=\H\rtimes_{\delta}H$, so that $M:=G/H\simeq \H$. Let $\rho:H\to \U(V)$ be another norm-continuous representation, so we can define an associated 
(trivial) vector bundle
\[ q:\V=G\times_{\rho,H}V\to G/H\simeq \H.\] 
Its space of holomorphic sections $\Gamma(\V)$ can be identified with the space of holomorphic functions $\O(\H,V)$, where a section of the bundle 
$$s:(z,1)(\{0\}\times H)\mapsto [(z,1),f(z,1)]$$
is defined by a holomorphic function $f':\H\to V$, $f'(z)=f(z,1)$ for $z\in \H$. 

With the representation $\delta:H\to\U(\H)$ we define a representation $\delta':H\to\U(\Fo(\H))$, $(\delta'_hg)(z)=g(\delta_{h^{-1}}z)$ for $h\in H$, $g\in \Fo(\H)$ and $z\in \H$. We can realize the representation $\delta'\otimes\rho$ of $H$ in the space of holomorphic sections of the bundle via the embedding 
\begin{align*}
&\Fo(\H)\otimes V\to \O(\H,V)\simeq \Gamma(\V)\\
&f\otimes v\mapsto (z\mapsto f(z)v).
\end{align*}
For $z\in \H$ we have $\Ev_z:\Fo(\H)\otimes V\to V$, $f\otimes v\mapsto f(z)v$ and 
$(\Ev_z)^*:V\to \Fo(\H)\otimes V$, $(\Ev_z)^*(w)=e^{\langle \cdot,z\rangle}w$ so that the reproducing kernel is given by
\[ Q(y,z):=(\Ev_y)(\Ev_z)^*=(\Ev_y)e^{\langle \cdot,z\rangle}\id_V
=e^{\langle y,z\rangle}\id_V=e^{\langle y,z\rangle}\id_V.\] 


\begin{thm}\label{multfreegrouplinear}
Let $\sigma$ be an involution on $H$, and let $\sigma^{\H}$ be 
an anti-unitary operator on $\H$ such that
\[ \delta_{\sigma(h)}=\sigma^{\H}\circ\delta_{h} \circ (\sigma^{\H})^{-1}, \] 
and $\psi$ be an anti-unitary operator on $V$ such that
\[ \rho_{\sigma(h)}=\psi\circ\rho_{h} \circ \psi^{-1}\quad \mbox{ for } \quad h \in H.\]
Suppose further 
that there is a subset $S$ in the fixed point set of $\sigma^{\H}$ such that:
\begin{itemize}
\item[\rm{(B)}]\label{cond51} The closure of 
$\delta(H)S$ in $\cH$ has interior points. 
\item[\rm{(F')}]\label{cond52} The operator 
$\psi$ commutes with the hermitian operators in the commutant 
$\rho(Z_H(S))'$, where $Z_H(S) := \{ h \in H : (\forall s \in S)\ \delta_h(s)=s\}$ 
is the pointwise stabilizer of $S$ in~$H$. 
\end{itemize} 
Then the representation $\delta'\otimes\rho$ of $H$ on $\Fo(\H)\otimes V$ is multiplicity free. 
\end{thm}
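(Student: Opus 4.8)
The plan is to realize Theorem~\ref{multfreegrouplinear} as a special case of the Third Propagation Theorem~\ref{multfreegroup} (or its reformulation~\ref{multfreegroup2}). First I would set up the group-theoretic data: take $G := \cH \rtimes_\delta H$, so that $M = G/H \simeq \cH$ with the obvious complex structure, and extend $\sigma$ to an automorphism $\sigma^G$ of $G$ by $\sigma^G(z,h) := (\sigma^\cH(z), \sigma(h))$; one checks this is a genuine automorphism using the intertwining relation $\delta_{\sigma(h)} = \sigma^\cH \circ \delta_h \circ (\sigma^\cH)^{-1}$ and the fact that $\sigma^\cH$ is anti-linear (so that the semidirect-product multiplication is respected — the anti-linearity is exactly what is needed since the complex structure on the base is being reversed). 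The subgroup playing the role of $K$ is $H$ itself, embedded as $\{0\} \times H \subseteq G$; it is stabilized by $\sigma^G$. The representation on the fiber is $\rho$ with extension $\beta$ determined by the (trivial, abelian) subalgebra $\q$ giving the complex structure, and $(\sigma^G, \psi)$ is a morphism between $(\rho,\beta)$ and $(\bar\rho,\bar\beta)$ precisely because of the relation $\rho_{\sigma(h)} = \psi \circ \rho_h \circ \psi^{-1}$ together with the fact that $\beta$ is essentially forced by $\d\rho$ in this linear setting.

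Next I would identify the realized representation. By the discussion in Section~\ref{proplinear}, the space $\Gamma(\V)$ is identified with $\O(\cH, V)$, and the subspace $\Fo(\cH) \otimes V$ sits inside it via $f \otimes v \mapsto (z \mapsto f(z)v)$; the computation there shows that this subspace has reproducing kernel $Q(y,z) = e^{\langle y, z\rangle}\id_V$, and the natural action of $G = \cH \rtimes_\delta H$ on holomorphic sections restricts on the $H$-factor to $\delta' \otimes \rho$ (the $\cH$-translations acting by the Weyl/oscillator-type action whose restriction to $H$ gives $\delta'$ on Fock space). So $(\delta' \otimes \rho, \Fo(\cH) \otimes V)$ is a unitary representation of $H = K$ realized in $\Gamma(\V)$, and Theorem~\ref{multfreegroup} will apply once its hypotheses (B) and (F) are verified for a suitable $B \subseteq G^{\sigma^G}$.

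For hypothesis (B), I would take $B := S \times \{e\} \subseteq G$; since $S$ lies in the fixed-point set of $\sigma^\cH$ and $\{e\}$ is fixed by $\sigma$, we have $B \subseteq G^{\sigma^G}$. Then $K B H = (\{0\}\times H)(S \times \{e\})(\{0\}\times H)$, and unwinding the semidirect product, the image of $KBH$ in $M = G/H \simeq \cH$ is exactly $\delta(H)S$; hence $\overline{KBH}$ has interior points in $G$ iff $\overline{\delta(H)S}$ has interior points in $\cH$, which is assumption~(B) of the theorem. For hypothesis (F), one computes $K_{(b)}$ for $b = (s, e)$: this is $b^{-1}Kb \cap H$ inside $H$, and a direct calculation in the semidirect product shows $K_{(b)} = \{h \in H : \delta_h(s) = s\}$, the pointwise stabilizer of the single point $s$. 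Working with the ``smaller subgroup'' variant of Remark~\ref{samesubgroup}, the relevant group is $N = Z_{K\cap H}(B) = Z_H(S)$, the pointwise stabilizer of all of $S$, and the condition becomes that $\psi$ commutes with $\rho(Z_H(S))'_h$ — which is precisely assumption~(F'). Then Theorem~\ref{multfreegroup} (via Remark~\ref{samesubgroup}) yields that $\delta' \otimes \rho$ is multiplicity free.

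The main obstacle I expect is bookkeeping in the semidirect product: verifying carefully that $\sigma^G$ is a well-defined automorphism (the anti-linearity of $\sigma^\cH$ interacts subtly with the twisting), that the image of $KBH$ in $G/H$ is genuinely $\delta(H)S$ and not something larger, and — most delicately — that the isotropy subgroup $K_{(b)} = (s,e)^{-1}(\{0\}\times H)(s,e) \cap (\{0\}\times H)$ really collapses to the stabilizer of the point $s$ in $H$. None of these is deep, but each requires chasing elements $(z,h)(w,k) = (z + \delta_h w, hk)$ through the multiplication, and a sign or order error there would break the correspondence with the hypotheses. Once these identifications are in place, everything else is a direct citation of the earlier propagation theorem and the reproducing-kernel computation already carried out in Section~\ref{proplinear}.
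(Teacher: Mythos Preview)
Your proposal is correct and follows essentially the same route as the paper: set $B = S \times \{e\} \subseteq G = \cH \rtimes_\delta H$, compute $Z_{K\cap H}(B) = Z_H(S)$ via the semidirect-product relations $(0,h)(s,e) = (\delta_h s, h)$ and $(s,e)(0,h) = (s,h)$, observe that $\overline{KBH}^{\circ}\neq\emptyset$ in $G$ is equivalent to $\overline{\delta(H)S}^{\circ}\neq\emptyset$ in $\cH$, and invoke Theorem~\ref{multfreegroup} through the (F') variant of Remark~\ref{samesubgroup}. The paper's proof is terser and leaves the construction of $\sigma^G$ and the verification that $(\sigma^G,\psi)$ intertwines $(\rho,\beta)$ and $(\bar\rho,\bar\beta)$ implicit in the surrounding setup of Section~\ref{proplinear}, but the substance is identical.
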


\begin{proof}
For the subset $B=S\times\{1\}\subseteq\H\times\{1\}\subseteq G$, the relations 
$(b,1)(0,h)=(b,h)$ and $(0,h)(b,1)=(\delta_hb,h)$ for $h\in H$ and $b\in B$ lead 
to 
\[ Z_H(B)=\{h\in H:\delta_h(b)=b\quad\mbox{for}\quad b\in B\}.\] 

If $B$ is in the fixed point set of $\sigma^{\H}$, 
$HBH$ is dense in an open subset of $G$ 
(which is equivalent to $\oline{H.B}^\circ \not=\eset$ in $\cH$), 
and $\psi$ commutes with the hermitian operators in $\rho(Z_H(B))'$, 
then Theorem \ref{multfreegroup} with condition (F') in 
Remark~\ref{samesubgroup} implies that the representation $\delta'\otimes\rho$ of $H$ on $\Fo(\H)\otimes V$ is multiplicity free.
\end{proof}

\begin{cor}\label{cormultfreegrouplinear}
Let $\sigma$ be an involution on $H$, $\sigma^{\H}$ an anti-unitary operator on $\H$ such that
\[ \delta_{\sigma(h)}\circ \sigma^{\H}=\sigma^{\H}\circ\delta_{h}.\] 
Suppose further that $S \subeq \cH^{\sigma^\cH}$ is such that the closure of $\delta(H)S$ in $\cH$ 
has interior points. 
Then the representation $\delta'$ of $H$ on $\Fo(\H)$ is multiplicity free.
\end{cor}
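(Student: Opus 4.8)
The plan is to obtain this as a direct specialization of Theorem~\ref{multfreegrouplinear}, taking the fiber representation to be one-dimensional and trivial. Concretely, I would set $V = \C$, let $\rho \: H \to \U(\C)$ be the trivial representation, and take $\psi \: \C \to \C$ to be complex conjugation, which is anti-unitary. The hypothesis on $\sigma^\cH$ assumed in the corollary, namely $\delta_{\sigma(h)}\circ\sigma^{\cH} = \sigma^{\cH}\circ\delta_{h}$, is equivalent (composing with $(\sigma^\cH)^{-1}$ on the right) to the relation $\delta_{\sigma(h)} = \sigma^{\cH}\circ\delta_{h}\circ(\sigma^{\cH})^{-1}$ required in Theorem~\ref{multfreegrouplinear}; and the relation $\rho_{\sigma(h)} = \psi\circ\rho_{h}\circ\psi^{-1}$ holds trivially since both sides equal $\id_\C$.

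Next I would verify conditions (B) and (F') of Theorem~\ref{multfreegrouplinear} for this $S$ and this $\rho$. Condition (B) — that the closure of $\delta(H)S$ in $\cH$ has interior points — is precisely the standing assumption of the corollary, and $S$ lies in the fixed point set of $\sigma^\cH$ by hypothesis. For condition (F'): since $\rho$ is the trivial representation, the commutant $\rho(Z_H(S))'$ is all of $B(\C) = \C$, so its hermitian part is $\R\,\id_\C$, and complex conjugation commutes with multiplication by real scalars; hence (F') is automatic.

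Applying Theorem~\ref{multfreegrouplinear} with these choices yields that the representation $\delta'\otimes\rho$ of $H$ on $\Fo(\cH)\otimes V$ is multiplicity free. Finally I would invoke the canonical isomorphism $\Fo(\cH)\otimes\C \cong \Fo(\cH)$, which intertwines $\delta'\otimes\rho$ with $\delta'$ because $\rho$ is trivial; this transports the multiplicity-free property to $(\delta',\Fo(\cH))$, completing the argument.

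There is essentially no genuine obstacle here, since the statement is a degenerate case of Theorem~\ref{multfreegrouplinear}. The only point requiring a moment of attention is recognizing that, when the fiber is one-dimensional and $\rho$ trivial, the commutation condition (F') collapses to the trivial fact that conjugation on $\C$ commutes with real scalars, so that no extra assumption beyond (B) is needed.
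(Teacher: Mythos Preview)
Your proposal is correct and follows exactly the approach implicit in the paper: the corollary is stated without proof as an immediate specialization of Theorem~\ref{multfreegrouplinear} to the case $V=\C$, $\rho$ trivial, $\psi$ complex conjugation, so that condition~(F') becomes vacuous and $\Fo(\cH)\otimes\C\cong\Fo(\cH)$ identifies $\delta'\otimes\rho$ with~$\delta'$.
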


\subsection{Hilbert-Schmidt operators}
In Section 5.6 of \cite{kob2} finite dimensional examples of visible actions on linear spaces are presented. We extend some of these results to the context of Hilbert--Schmidt operators. We denote by
 $B_2(\H_1,\H_2)$ the Hilbert--Schmidt operators from $\H_1$ to $\H_2$ and by  
\[ \U_2(\H):=\U(\H)\cap(\bone+B_2(\H)) \]
the unitary Hilbert--Schmidt perturbations of the identity. The first result is about torus actions and the remaining results involve an approximate Cartan decomposition in this context. 

The two sided action of the group $\U_2(\H)\times \U_2(\H)$ on $\GL_2(\H)$ or $B_2(\H)$ is weakly visible. We take an orthonormal basis $(e_n)_{n\in \N}$ of $\H$ and define a conjugation on $\H$ by $J(\sum_n\alpha_ne_n)=\sum_n\overline{\alpha_n}e_n$ and an automorphism of $\GL_2(\H)$ by $\sigma(g)=J gJ$. If we define  $S$ as the subset of positive diagonal operators in $\GL_2(\H)$, the Cartan decomposition and a 
finite-dimensional approximation argument 
imply that  $D:=(\U_2(\H)\times \U_2(\H)).S=\U_2(\H)S \U_2(\H)$ is dense in $\GL_2(\H)$ and also in $B_2(\H)$. Furthermore the $\U_2(\H)\times \U_2(\H)$-orbits are $\sigma$-invariant 
since $\sigma(\U_2(\H))=\U_2(\H)$, and $\sigma(s)=s$ for $s\in S$. 
Therefore the action is $(S,\sigma)$-weakly visible.

\begin{ex}\label{torushilbert}
\rm{
A particular case is the multiplication action of the abelian Banach--Lie group 
\[ \ell^2(\N,\R)/\Z^{(\N)} \cong 
\exp(i\ell^2(\N,\R))\subseteq \U_2(\ell^2(\N,\C)) 
= \U(\ell^2(\N,\C)) \cap (\bone + B_2(\ell^2(\N,\C)) \] 
on $\ell^2(\N,\C)$. It is $S$-weakly-weakly visible if we take $S=\ell^2(\N,\R)$ and $\sigma$ is conjugation on $\ell^2(\N,\C)$. Here the finitely supported sequences $f:\N\to \C$ are contained in $D:=\exp(i\ell^2(\N,\R)).\ell^2(\N,\R)$, so that this 
subset is dense in $\ell^2(\N,\C)$. 
The sequence $f(n)=\frac{i}{2^n}$ is contained in $\ell^2(\N,\C)$ 
but not in $D$ because $(-i,-i,-i,\ldots)$ is not contained in $\exp(i\ell^2(\N,\R))$. 

Assume $H:=\exp(i\ell^2(\N,\R))$ and $\ell^2(\N,\C)$ are endowed with the canonical involutions. Corollary~\ref{cormultfreegrouplinear} implies that the representation of $H$ on $\Fo(\ell^2(\N,\C))\subeq \cO(\ell^2(\N,\C))$ is multiplicity free. This is the fact that in the Taylor expansion of a function $f\in\Fo(\ell^2(\N,\C))$ each monomial appears exactly once. 
In the finite-dimensional context this is 
the most basic example of a multiplicity-free representation 
(see the introduction of \cite{kob2}). }
\end{ex}

\begin{rem}
\rm{
Let $\H=L^2(X,\mu)$ with a $\sigma$-finite measure space $(X,\mu)$. 
Then the multiplication algebra $\A=L^{\infty}(X,\mu)$ is maximal abelian 
in $B(L^2(X,\mu))$. If $\sigma(f) = \overline{f}$ is 
complex conjugation on $L^2(X,\mu)$ and $S=L^2(X,\mu;\R)$ denotes the subspace of 
real-valued functions, then the action of the unitary group 
$\U_{\A}$ of $\cA$ on $L^2(X,\mu)$ is $(S,\sigma)$-visible. 
If $\mu$ is infinite, then the action of the subgroup 
$\exp(i L^2(X,\mu;\R))\subseteq \U_{\A}$ is $(S,\sigma)$-weakly visible.
}
\end{rem}

\begin{ex}\label{linearmfhs}
\rm{Let $\H_1$ be a closed subspace of the Hilbert space $\H_2$ and take an orthonormal basis $(e_i)_{i\in I_1}$ of $\H_1$ and an orthonormal basis $(e_i)_{i\in I_2}$ of $\H_2$ such that $I_1\subseteq I_2$. Consider the action $\delta$ of $H:=\U_2(\H_1)\times \U_2(\H_2)$ on $M=B_2(\H_1,\H_2)$ given by 
  \begin{equation}
    \label{eq:doubleact}
\delta(u_1,u_2)(x)=u_2xu_1^{-1}.
  \end{equation}

Consider the subset 
\[ S:=\{A\in  B_2(\H_1,\H_2):(\forall j\in I)\ A(e_j)\in\R e_j\} \] 
and the conjugations of $\H_1$ and $\H_2$ given by 
$J_2(\sum_{j\in I_2}\alpha_j e_j)=\sum_{j\in I_2}\overline{\alpha_j}e'_j$ 
and $J_1=J_2|_{\H_1}$ respectively. Define a conjugation on $B_2(H_1,\H_2)$ 
by $\sigma^M(x)=J_2 xJ_1$. It is easy to verify that the action 
\eqref{eq:doubleact} 
is $(S,\sigma^M)$-weakly visible and compatible with the automorphism of $\U_2(\H_1)\times \U_2(\H_2)$ given by $\sigma(u_1,u_2)=(J_1u_1J_1,J_2u_2J_2)$. 
Corollary~\ref{cormultfreegrouplinear} implies that the representation of $H$ on $\Fo(B_2(\H_1,\H_2))\subeq \cO(B_2(\H_1,\H_2))$ is multiplicity free.
}
\end{ex}

\begin{ex}\label{kacexample}
\rm{Let $\H_1$ be a closed subspace of the Hilbert space $\H_2$ and take an orthonormal basis $(e_i)_{i\in I_1}$ of $\H_1$ and an orthonormal basis $(e_i)_{i\in I_2}$ of $\H_2$ such that $I_1\subseteq I_2$. 
Consider the action $\delta$ of $\U_2(\H_1)\times \U_2(\H_2)$ on 
$M := B_2(\H_1,\H_2\oplus\C)\simeq B_2(\H_1,\H_2)\oplus~\H_1$ given by 
\[ \delta(u_1,u_2)(x,\xi)=(u_2xu_1^{-1},u_1\xi),\]
the subset 
\[S':=S\oplus \H_1^{J_1}, \quad \mbox{ where } \quad 
\H_1^{J_1} :=\{\xi\in\H_1:J_1\xi=\xi\} 
\] 
with $S$ as in Example \ref{linearmfhs}, 
and the conjugation on $B_2(\H_1,\H_2)\oplus\H_1$ 
given by $\sigma(x,\xi)=(J_2xJ_1,J_1\xi)$.
The action is $(S',\sigma)$-weakly visible since the action of the subgroup
\begin{align*}
\{(u,(u,\id_{\H_1^{\perp}}))\in\U_2(\H_1)\times \U_2(\H_2):& 
(\forall j \in I_1)\ u(e_j)\in \T e_j\}
\end{align*}
fixes $S$ and rotates the vectors in a dense subset of $\H_1$ into $\H_1^{J_1}$.

Corollary~\ref{cormultfreegrouplinear} implies that the action of $H$ on 
$$\Fo(B_2(\H_1,\H_2)\oplus\H_1)\simeq \Fo(B_2(\H_1,\H_2))\otimes\Fo(\H_1) \subeq \cO(B_2(\H_1,\H_2)\oplus\H_1)$$ 
is multiplicity free.  
}
\end{ex}

\begin{rem}
\rm{
Example~\ref{linearmfhs}
 can be interpreted as the isotropy representation of the group $\U_2(\H_1)\times \U_2(\H_2)$ on the tangent space at the base point of the restricted Gra\ss{}mannian 
\[ G_{\rm res}(\cH_1\oplus \cH_2) \cong \U_2(\H)/(\U_2(\H_1)\times \U_2(\H_2)).\] 
}
\end{rem}

\begin{ex}\label{linearmfwedge}
\rm{
Assume the context and notation of Example \ref{linearmfhs}, 
and for simplicity we set $\H_1 = \cH_2 = \cH$. The action $\delta$ of 
$H:=\U_2(\H)\times \U_2(\H)$ on 
$\cK := B_2(\cH)$ given by 
\[  \delta(u_1,u_2)(x)=u_2xu_1^{-1} \] 
is a  unitary representation $\delta:H\to \U(\cK)$. 
As uniformly bounded isotropy representations, 
we can take for example the representations 
\[ \rho(u_1, u_2) := 
\rho_1(u_1) \oplus \rho_2(u_2),\quad \mbox{ where } \quad
\rho_j:= \bigoplus_{k=1}^{m_j}\Lambda^{n_{j,k}}.\] 
Here, for $n\in\N$ the representation $\Lambda^{n}:\U_2(\H)\to \U(\Lambda^{n}\H)$ 
is defined by
\[ \Lambda^{n}(u)(v_1\wedge\ldots\wedge v_n)=uv_1\wedge\ldots\wedge uv_n. \] 
Consider the group 
\[ 
N=Z_H(B)=\{(u,u)\in \U_2(\H_1)\times 
\U_2(\H_2):(\forall j \in I)\ u(e_j)\in \T e_j\},\] 
where
\[ B=\{x\in  B_2(\H_1,\H_2):x(e_i)\in\R e'_{i}\quad\mbox{for}\quad j\in J\}.\] 
Since 
\[ \Lambda^{n}(\diag (t_j)_{j\in J})(e_{j_1}\wedge\ldots\wedge e_{j_n})
=t_{j_1}\ldots t_{j_n}(e_{j_1}\wedge\ldots\wedge e_{j_n}),\] 
if the $n_{j,k}$ for $j=1,2$ and $k=1,\ldots,m_j$ are all distinct, 
$\rho|_N$ is multiplicity free. 
We can take the canonical anti-unitary operators given by complex conjugation on 
\[ \Lambda^{n}\big(\ell^2(J)\big)\subseteq \bigotimes^{n}\ell^2(J)=\ell^2(J^n).\] 
Theorem \ref{multfreegrouplinear} implies that the representation $\delta'\otimes\rho$ of $H=\U_2(\H)\times \U_2(\H)$ is multiplicity-free. 
}
\end{ex}

\begin{ex}\label{linearmftrace}
\rm{For a Hilbert space $\cH$, let 
$\U_1(\H) :=\U(\H)\cap(1+B_1(\H))$ 
denote the group of unitary operators which are trace class perturbations of 
the identity. 
In the previous example we can take instead of 
$\U_2(\H)\times \U_2(\H)$ the group $\U_1(\H)\times \U_1(\H)$   
 to construct isotropy representations from the operator 
determinant. The isotropy representation is 
$\rho(u_1, u_2) = \rho_1(u_1) \oplus \rho_2(u_2)$, where   
$$\rho_j = \bigoplus_{k=1}^{m_j}(\deter_{\U_1(\H)})^{n_{j,k}}.$$
If the $n_{j,k}$ for $j=1,2$ and $k=1,\ldots,m_j$ are all distinct, then $\rho|_N$ 
is multiplicity free, where
\[ N=Z_H(B)=\{(u,u)\in \U_1(\H_1)\times 
\U_1(\H_2): (\forall j \in I)\ u(e_j)\in \T e_j\}.\] 
}
\end{ex}

\subsection{Finite von Neumann algebras} 

Some basic examples of type II$_1$ factors have representations such that the Hilbert space where 
the factor $\cM$ acts can be endowed with an anti-unitary operator $J$ such  
that 
\begin{equation}
  \label{eq:j-cond}
J\cM J = \cM \quad \mbox{ and } \quad Jx J = x\quad\mbox{ for }x\in\cA_h 
\end{equation}
for a maximal abelian $*$-subalgebra $\cA$ of $\cM$ (a {\it masa} for short), see 
Examples~\ref{exvncartype} and~\ref{exgroupmeasurespace}. We are going to use Theorem \ref{multfreegrouplinear} to construct examples of representations of the product 
$H := \U_{\cM}\times\U_{\cM}$ of the unitary group of a finite factor $\cM$, where the base is the GNS construction of $\cM$, the slice $S$ 
consists of the hermitian operators in a masa, and 
the conjugations are constructed from the conjugation $J$. 

We denote by $L^2(\M)$ the GNS Hilbert space, which is the completion of $\M$, 
endowed with the inner product $\langle x,y\rangle=\tau(xy^*)$ for $x,y\in \cM$, where $\tau$ is the trace of the algebra~$\M$. Let the unitary representation $\delta:H\to \U(\H)$ of $H=U_{\M}\times U_{\M}$ on $\H=L^2(\M)$ given by  
\[ \delta(u_1,u_2)(x)=u_1 xu_2^{-1} 
\quad \mbox{ for } \quad u_1,u_2\in U_{\M}, x\in \M\subseteq L^2(\M).\]
Let $J$ be as in \eqref{eq:j-cond}. 
If we define $\sigma(x)=J xJ$ for $x\in \M$, then $\sigma$ extends to an anti-unitary involution $\sigma^{L^2(\M)}$ on $L^2(\M)$ compatible with the involutions on $U_{\M}$ and $H=U_{\M}\times U_{\M}$ given by $\sigma(u)=J uJ$ for $u\in U_{\M}$, and $\sigma(h)=(J,J) h(J,J)$ for $h\in H=U_{\M}\times U_{\M}$, respectively. 
In fact, for $u_1,u_2\in U_{\M}$ and $x\in \M\subseteq L^2(\M)$, we have 
\begin{align*}
(\sigma(u_1),\sigma(u_2)).\sigma^{L^2(\M)}(x)&=\sigma(u_1)\sigma^{L^2(\M)}(x)\sigma(u_2^{-1})=(J u_1J)J xJ(J u_2^{-1}J)\\
&=J u_1xu_2^{-1}J=\sigma^{L^2(\M)}(u_1xu_2^{-1})
=\sigma^{L^2(\M)}((u_1,u_2).x).
\end{align*}

Note that $\sigma^{L^2(\M)}$ fixes $S:=\A_h$ pointwise. 
To prove that the action of $\U_{\M}\times \U_{\M}$ on $L^2(\M)$ is 
$(S,\sigma^{L^2(\M)})$-weakly visible it remains to show that $D:=U_{\M}\A_h U_{\M}$ is dense in $L^2(\M)$. This is a consequence of the following proposition. 

\begin{prop}
Let $\M$ be type II$_1$-factor with trace $\tau$, 
let $\A\subseteq\M$ be a masa, 
and denote the cone of positive invertible elements in $\A$ by $\A^+$. Then $D:=U_{\M}\A^+ U_{\M}$ is dense in $L^2(\M)$
\end{prop}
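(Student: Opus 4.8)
The plan is to use the polar decomposition together with the fact that positive operators in a type II$_1$ factor can be approximated in $L^2$-norm by operators that are diagonalizable, and to exploit the fact that any masa $\mathcal A\subseteq\M$ is "generic" in the sense that every positive operator is unitarily conjugate, up to an arbitrarily small $L^2$-perturbation, to a positive operator in $\mathcal A$. Since $\M$ is dense in $L^2(\M)$ and $U_{\M}$ acts by isometries, it suffices to prove that every $x\in\M$ lies in the $\|\cdot\|_2$-closure of $D=U_{\M}\mathcal A^+ U_{\M}$.

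**First** I would reduce to positive operators: writing the polar decomposition $x=u|x|$ with $u\in U_{\M}$ (using finiteness of $\M$ to get $u$ unitary rather than merely a partial isometry, after adjusting on the kernel which is a small perturbation in $\|\cdot\|_2$), it is enough to approximate an arbitrary positive $a=|x|\in\M^+$ by elements of $\mathcal A^+ U_{\M}$, hence by elements of $D$. **Next**, given $a\in\M^+$ and $\eps>0$, I would use the spectral theorem to replace $a$ by a positive invertible $a'$ with finite spectrum (a simple function of $a$) with $\|a-a'\|_2<\eps$; this is legitimate since $\tau$ is a faithful normal trace, so $L^2$-convergence of bounded spectral truncations holds. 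So one is reduced to approximating a positive invertible operator with finite spectrum. **Then** comes the central point: such an operator is a positive linear combination of finitely many projections of specified traces, and since $\M$ is a type II$_1$ factor, its unitary group acts transitively (up to $L^2$-small error, in fact exactly) on the set of projections of a given trace; more precisely, any projection $p$ with $\tau(p)=t$ is unitarily equivalent to the spectral projection of a fixed element of $\mathcal A$ of trace $t$ (a masa in a II$_1$ factor contains projections of every trace in $[0,1]$ and is diffuse). Assembling these, a positive invertible operator with finite spectrum can be conjugated by a single unitary into $\mathcal A^+$; hence it lies in $U_{\M}\mathcal A^+ U_{\M}=D$ exactly, and therefore $a$ is in the $\|\cdot\|_2$-closure of $D$.

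**The main obstacle** I anticipate is the step asserting that a positive operator with finite spectrum can be unitarily conjugated into the chosen masa $\mathcal A$: one must move a whole orthogonal family of projections $(p_i)$ with $\sum_i p_i=\bone$ simultaneously into $\mathcal A$, which requires choosing, inside $\mathcal A$, an orthogonal family $(q_i)$ with $\tau(q_i)=\tau(p_i)$ and $\sum q_i=\bone$ — possible because $\mathcal A$ is a diffuse abelian von Neumann algebra, so isomorphic to $L^\infty[0,1]$ with Lebesgue measure via $\tau$ — and then invoking that in a II$_1$ factor two families of projections with equal traces and the same sum are conjugate by a unitary (decompose into partial isometries $v_i$ with $v_i^*v_i=p_i$, $v_iv_i^*=q_i$ and set $u=\sum v_i$, which is unitary since both families sum to $\bone$). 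One should also be slightly careful that finite spectrum and invertibility are compatible with the finitely-supported approximation; taking $a'=\sum_{k} \lambda_k \chi_{[\lambda_k-\eta,\lambda_k+\eta)}(a)$ and adding a small multiple of $\bone$ handles this. The remaining verifications — polar decomposition estimates, $L^2$-continuity of the functional calculus against $\tau$ — are routine.
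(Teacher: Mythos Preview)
Your argument is correct, but it follows a different path from the paper's.

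The paper's proof is a two-line application of existing results: first, it invokes the characterization of the \emph{norm} closure of the unitary orbit of a self-adjoint element in a II$_1$ factor as the set of self-adjoints with the same spectral distribution (Arveson--Kadison), together with the observation that every compactly supported probability measure is realized as the spectral measure of some self-adjoint in $\mathcal A$; this gives that every positive invertible is in the norm closure of $U_{\mathcal M}\mathcal A^+ U_{\mathcal M}$. Second, it cites Choda's theorem that the unit group of a finite von Neumann algebra is norm dense, so $D$ is norm dense in $\mathcal M$, hence $L^2$-dense.

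Your route is more elementary and self-contained: you work directly in the $L^2$-norm, reduce via the spectral theorem to positive invertibles with finite spectrum, and then observe that such an operator is \emph{exactly} (not just approximately) unitarily conjugate into $\mathcal A^+$, by matching the orthogonal family of spectral projections with a family of the same traces inside the diffuse masa and summing the implementing partial isometries to a unitary. This avoids both the Arveson--Kadison result and Choda's theorem, at the cost of writing out the projection-matching step. One small point: in the polar decomposition you can take $u$ unitary outright (no perturbation needed), since in a finite factor the left and right supports of any element are equivalent projections, so the partial isometry extends to a unitary; this simplifies your first reduction.
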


\begin{proof}
For a self-adjoint operator $x\in \M$ there exists a unique Borel probability 
measure $m_x$ on $\R$ such that 
$$\int_{-\infty}^{+\infty}\lambda^ndm_x(\lambda)=\tau(x^n)\quad\mbox{for}\quad n\in \{0\}\cup\N.$$
If, conversely, $m$ is a compactly supported probability measure on $\R$, 
then there is a self-adjoint operator $x\in \A$ with spectral measure $m_x = m$, 
see \cite{ak06}*{Prop. 5.2}. The norm closure of the unitary orbit $\O(x)=\{uxu^{-1}:u\in \U_{\M}\}$ of a self-adjoint $x\in \M$ consists of the selfadjoint operators in $\M$ 
with the same spectral measure as $x$ (\cite{ak06}*{Thm.~5.4}), 
so the spectral measure is a complete invariant of approximate unitary equivalence. 

Then the polar decomposition implies that 
$D:=U_{\M}\A^+ U_{\M}$ is dense in the unit group $\M^\times$ of~$\M$. 
In \cite{ch70}*{Thm. 5} it is shown that a von Neumann algebra $\cM$ 
has dense unit group in the norm topology if and only if $\M$ is of finite type.
Therefore, if $\M$ is a finite type factor, then $D$ 
is norm dense in $\M$.  Since on $\M\subseteq L^2(\M)$, 
the Hilbert space norm is dominated by the uniform norm, 
$D:=U_{\M}\A^+ U_{\M}$ is dense in $L^2(\M)$ with its Hilbert space topology. 
\end{proof}

\begin{rem}{\rm 
As the example after \cite{ak06}*{Thm.~5.4} shows, 
there exist selfadjoint operators whith the same spectral measure 
which are not unitarily equivalent. }
\end{rem}

Set $S:=\A_h$, so that 
\[ 
N=Z_H(S\times\{1\})=\{(t,t)\in U_{\M}\times U_{\M}:t\in U_{\A}\}.\]
If there is a unitary representation $\rho:U_{\M}\times U_{\M}\to U(V)$ on a Hilbert space $V$, and there is an antiunitary involution $\psi$ on $V$ such that 
\begin{itemize} 
\item $\rho_{\sigma(h)}\circ \psi=\psi\circ\rho_{h} \quad \mbox{ for } \quad h \in H$.
\item $\psi$ commutes with the hermitian operators in 
$\rho(\{(t,t)\in U_{\M}\times U_{\M}:t\in U_{\A}\})'$ 
\end{itemize} 
then Theorem \ref{multfreegrouplinear} implies that the representation $\delta'\otimes\rho$ of $H=U_{\M}\times U_{\M}$ on $\Fo(L^2(\M))\otimes V$ is multiplicity free.  

We now turn to examples of factors $\cM \subeq B(\cH)$ 
for which there exists an anti-unitary operator $J$ such  
that $J\cM J = \cM$ and $Jx J = x$ for selfadjoints $x$ in some masa $\cA \subeq \cM$.

\begin{defn}{\rm 
There are two types of masas $\cA$ in a II$_1$ factor $\cM$ specified
 in terms of the algebra generated by the normalizer of the masa. The normalizer of the masa is defined by
$$N(\cA)=\{u\in U_{\cM}:u\cA u^*=\cA\}.$$
The masa $\cA$ is called {\it regular} or {\it Cartan} 
if $N(\cA)''=\cM$, and it is called singular if $N(\cA)''=\cA$, see the first paragraph of \cite{ss08}*{3.2}. }
\end{defn}

\begin{ex}\label{exvncartype}
\rm{
(von Neumann algebras of CAR type). Let $M_2(\C)$ be the $2\times 2$ complex matrices with diagonal masa $D$. For each $k\in \N$ let $M_k$ be a copy of 
$M_2(\C)$ with a copy of $D$ as diagonal masa $D_k$. The algebra 
$\cM_n=\otimes_{k=1}^{n}M_k\simeq M_{2^n}(\C)$ has a masa 
$\cA_n=\otimes_{k=1}^{n}D_k$. We have embeddings $\cM_n\hookrightarrow\cM_{n+1}$, $x\mapsto x\otimes \id_{M_2(\C)}$ which preserve the normalized traces. Therefore 
$\bigcup_{n=1}^{\infty}\otimes_{k=1}^nM_k$ is a $*$-algebra with trace
\[ \tau(\otimes_{k=1}^{\infty}x_k)=\prod_{k=1}^{\infty}\tr(x_k),\]  
where all but finitely many $x_i$ are equal to the identity of $M_2(\C)$. The weak closure $\cM$ of the GNS representation of $\bigcup_{n=1}^{\infty}\otimes_{k=1}^nM_k$ is a copy of the hyperfinite II$_1$-factor and the weak closure $\cA$ of $\bigcup_{n=1}^{\infty}\otimes_{k=1}^nD_k$ 
in this representation is a Cartan masa, see the first part of Subsection 3.4 in \cite{ss08}. The canonical complex conjugation on $\bigcup_{n=1}^{\infty}\otimes_{k=1}^nM_k$ 
yields a complex conjugation $J$ on the GNS space such that $J\M J=\M$, $Jx J=x$ for $x\in\cA_h$ and $\tau(JxJ)=\overline{\tau(x)}$. 
}
\end{ex}

\begin{rem} 
\rm{
In \cite{st80} it is shown that there is, up to conjugacy, a unique real 
von Neumann algebra $\cR$ in the hyperfinite factor $\cM$ of type II$_1$, i.e. $\cR$ is a $*$-algebra over the reals such that $\cR+\sqrt{-1}\cR=\cM$ and $\cR\cap\sqrt{-1}\cR=\{0\}$. It follows that any two involutive antilinear automorphisms of this factor are conjugate under $\Aut(\cR)$. This contrasts with the situation in $B(\H)$, where there are two distinct conjugacy classes of such automorphisms, induced by conjugation with antiunitary operators $J$ satisfying 
$J^2 = \pm \bone$. 
}
\end{rem}

\begin{ex}\label{exgroupmeasurespace}
\rm{
(Group-measure space construction). See Subsection 8.6 in \cite{kr97} for detailed information on this construction. Let $\Gamma\curvearrowright (X,\mu)$ be a probability measure preserving action of a countable discrete group $\Gamma$. We 
consider the unitary {\it Koopman representation}
\[ \alpha:\Gamma\to\U(L^2(X,\mu)), \qquad (\alpha_sf)(x)=f(s^{-1}.x)
\quad \mbox{  for } \quad s\in\Gamma, f\in L^2(X,\mu), x\in X.\]
Let $\lambda:\Gamma\to\U(\ell^2(\Gamma))$ be the left regular representation. We regard $L^{\infty}(X)\simeq L^{\infty}(X)\otimes 1\subseteq B(L^2(X)\otimes \ell^2(\Gamma))$. For 
$s\in \Gamma$ we consider the unitaries $u_s=\alpha_s\otimes \lambda_s\in\U(L^2(X)\otimes \ell^2(\Gamma))$. The crossed product von Neumann algebra is defined as
\[ L^{\infty}(X)\rtimes\Gamma :=\Big\{\sum_{s\in \Gamma}a_su_s:
a_s\in L^{\infty}(X)\Big\}''\subseteq B(L^2(X)\otimes \ell^2(\Gamma)).\]
The trace is given by the extension of
\[ \tau\bigg(\sum_{s\in\Gamma}a_su_s\bigg)=\int_Xa_ed\mu.\]
The action is called {\it free} if $\A:=L^{\infty}(X)$ is maximal abelian in $L^{\infty}(X)\rtimes\Gamma$, and in this case $L^{\infty}(X)$ is a Cartan subalgebra. 
The von Neumann algebra 
$L^{\infty}(X)\rtimes\Gamma$ is a II$_1$ factor if and only if the action $\Gamma\curvearrowright X$ is ergodic, i.e., $L^{\infty}(X,\mu)^{\Gamma}=\C 1$.

Let $J$ be the complex conjugation on $L^2(X)\otimes \ell^2(\Gamma)\simeq L^2(X\times \Gamma)$. Observe that $J a J=\overline{a}$ for $a\in L^{\infty}(X)$ and $J u_s J=u_s$ for $s\in \Gamma$. Therefore $J(L^{\infty}(X)\rtimes\Gamma)J=L^{\infty}(X)\rtimes\Gamma$ and $J x J=x$ for $x\in\cA_h$. Note that $\tau(J x J)=\overline{\tau(x)}$ for $x\in L^{\infty}(X)\rtimes\Gamma$.  
}
\end{ex}

\subsection{Symmetric spaces}

In Subsections 5.3 and 5.4 of \cite{kob2} finite dimensional examples of visible actions on symmetric spaces are presented. The action of $H$ on $G/H$ 
for a vast class of symmetric spaces is 
studied in \cite{kob08}. We begin by presenting weakly visible actions on Gra\ss{}mannians and symmetric domains modeled on Banach spaces. We first present a weak visibility result for the group of bounded invertible operators acting on a Hilbert space to illustrate the approximate Cartan decomposition which is 
involved in the subsequent arguments.

\begin{ex}\label{cartanunif}
\rm{
Let $\H$ be a Hilbert space with orthonormal basis $(e_j)_{j\in I}$ and define a complex conjugation on $\H$ by $J(\sum_j\alpha_j e_j )=\sum_j \overline{\alpha_j}e_j$ 
and an automorphism on $\GL(\H)$ by $\sigma(g)=J gJ$. If we define  $S$ as the set of 
positive diagonal operators in $\GL(\H)$ then $D:=(\U(\H)\times \U(\H)).S=\U(\H)S \U(\H)$ is dense in $\GL(\H)$: 
For $g\in\GL(\H)$ we have the polar decomposition $g=up$, with $u\in \U(\H)$ and an 
invertible $p>0$. Given $\epsilon>0$ we can use the measurable 
functional calculus of $p$ to find a positive invertible $q$ with finite spectrum such that $\|q-p\|<\epsilon$. Since $q$ has finite spectrum, there is a unitary $v$ such that $s:=vqv^{-1}\in S$. Hence 
$$\|g-(uv^{-1})sv\|=\|up-uq\|<\epsilon.$$ 
Furthermore the $\U(\H)\times \U(\H)$-orbits are preserved under $\sigma$ since $\sigma(\U(\H))=\U(\H)$, and $\sigma(s)=s$ for $s\in S$, so the action 
is $(S,\sigma)$-weakly visible.
}
\end{ex}

\begin{ex}\label{pcss}
\rm{Let $\cH_1$ and $\cH_2$ be complex Hilbert spaces. 
We consider the identical representation of $\U(\K)$ on the complex Hilbert space 
$\K=\H_1\oplus\H_2$. Then the subgroup $Q:=\{g\in\GL(\K):g\H_1=\H_1\}$ is a complex Lie subgroup of $\GL(\K)$ and the Gra\ss{}mannian $\Gr_{\H_1}(\K):=\GL(\K)\H_1\simeq\GL(\K)/Q$ 
carries the structure of a complex homogeneous space on which the unitary group 
$G=\U(\K)$ acts transitively\begin{footnote}{Here we use that, 
for $\cE := g\cH_1$, the orthogonal space $\cE^\bot$ is the image of 
$\cH_2$ under $(g^{-1})^*$. Hence there exists a unitary isomorphism 
$\cE \oplus \cE^\bot \to \cH_1 \oplus \cH_2$ mapping $\cE$ to $\cH_1$.}
\end{footnote}
and which is isomorphic to $G/H$ for $H:=\U(\K)_{\H_1}\simeq\U(\H_1)\times\U(\H_2)$. 
Writing elements of $B(\H)$ as $(2\times 2)$-matrices according 
to the decomposition $\K=\H_1\oplus \H_2$, we have
\[ \q=
\Big\{ \pmat{a & b \\ c & d} \: a\in B(\H_1),b\in B(\H_2,\H_1),d\in B(\H_2)\Big\}.\] 
and $\q\simeq\p^+\rtimes\h_{\C}$ and $\gl(\K)=\q\oplus\p^-$ holds for
\[ \p^-=
\Big\{ \pmat{0 & 0 \\ c & 0} \:c\in B(\H_1,\H_2) \Big\}\quad \mbox{ and }\quad 
\p^+=\overline{\p^-}.\] 

Assume that $\H_1 := \H \oplus \{0\}$ and 
$\H_2 := \{0\} \oplus \H$ for a Hilbert space $\H$. We fix an orthonormal basis 
$(e_i)_{i \in I}$ in $\H$. We can define a conjugation $J$ on $\H$ 
by $J(\sum_{j\in I}\alpha_j e_j)=\sum_{j\in I}\overline{\alpha_j}e_j$ 
and defined $J_{\K}(v,w) := (Jv,Jw)$ on $\K$. 
Let $\sigma(u)=J_{\K} uJ_{\K}$ be the corresponding 
involution on $\U(\K)$ and $\sigma^M(u\H_1)=J u\H_1=\sigma(u)\H_1$ be 
the corresponding involution on the Gra\ss{}mannian $\Gr_{\H_1}(\K)$.

Consider the embedding 
\[ \tau:\B(\H)\to \g,\quad A\mapsto 
\pmat{0 & A \\ -A^* & 0},\] 
the diagonal real subalgebra of $\B(\H)$ given by
\[ \cA_{\R}=\{A\in\B(\H): (\forall j \in I)\ 
A(e_j)\in\R e_j\},\]
and let $B:=\exp(\tau(A_{\R}))$, where $\exp:\g\to G$ is the exponential 
map of $\U(\K)$. Observe that $B\subseteq G^{\sigma}$ and that   
\begin{align*}
Z_{H}(B)=\{(u,u)\in\U(\H)^2 \: (\forall j \in I)\ 
u(e_j) \in \T e_j\}.
\end{align*}
To prove that the action of $K=H=\U(\H)^2$ 
on $\Gr_{\H_1}(\K)$ is $(B,\sigma^M)$-weakly-visible it remains to show that 
$$\overline{\U(\H)^2\exp(\tau(\cA_{\R}))\U(\H)^2}^\mathrm{o}\neq \emptyset.$$ 
Observe that 
\[ HBH=\U(\H)^2\exp(\tau(\U(\H)\cA_{\R}\U(\H)))\U(\H)^2.\] 
For $\eps >0$, the argument used to prove the approximate Cartan decomposition of 
Example~\ref{cartanunif} leads to 
\[ \overline{\U(\H)(\cA_{\R}\cap B_{\eps/2}(\eps \bone))\U(\H)}
=B_{\leq\eps/2}(\eps\bone).\]
We choose $\eps >0$ small enough to ensure that 
$\tau(B_{\eps/2}(\eps\bone))$ is contained in a neighborhood of 
$0$ on which $\exp$ is a local diffeomorphism, and the result follows. }
\end{ex}

\begin{ex}\label{ncss}
\rm{
Let $\K=\H\oplus\H$,  $\H_1 := \H \oplus \{0\}$, and 
$\H_2 := \{0\} \oplus \H$ for a Hilbert space~$\H$. We fix an orthonormal basis 
$(e_j)_{j \in I}$ in $\H$. We can define a conjugation $J$ on $\H$ 
by $J(\sum_{j\in I}\alpha_j e_j)=\sum_{j\in I}\overline{\alpha_j}e_j$ 
and define $J_{\K}(v,w) := (Jv,Jw)$ on $\K$.

We endow the Hilbert space $\K$ 
with the indefinite hermitian form given by 
\[ h((v_1,v_2),(w_1,w_2))=\langle v_1,w_1 \rangle
-\langle v_2, w_2\rangle .\]
 We can write $\D:=\{z\in B(\cH):\|z\|<1\}$ as $G/H$, where $G$ is the pseudo-unitary group 
\[ G=\U(\H_1,\H_2)=\{g\in \GL(\K):h(g.v,g.v)=h(v,v)\quad\mbox{for all}\quad 
v\in\K\},\] 
and $H=\U(\H_1)\times\U(\H_2)\cong \U(\cH)^2$ is the subgroup of diagonal matrices in $G$. In fact, $G$ acts transitively on $M$ by fractional linear transformations $\bigl(\begin{smallmatrix}
a&b \\ c&d
\end{smallmatrix} \bigr).z=(az+b)(cz+d)^{-1}$, where the $(2\times 2)$-block matrix is written according to the decomposition of $\K$.  The stabilizer of $0\in \D$ 
is the group $H$. 

A conjugation on $Z:=B_2(\H,\H)$ is given by $\sigma^Z(x)=J xJ$. 
Let 
\[ \tau:\B(\H)\to \g,\quad A\mapsto  
\pmat{0 & A \\ A^* & 0}.\] 
Consider the diagonal real subalgebra of $\B(\H)$ given by
\[ \cA_{\R}=\{A\in\B(\H): (\forall j \in I)\ A(e_j)\in \R e_j\}, \] 
and let $B:=\exp(\tau(\cA_{\R}))$, where $\exp(X) = \sum_{n = 0}^\infty \frac{X^n}{n!}$ 
is the operator exponential. 
Observe that $B\subseteq G^{\sigma}$ and that   
\begin{align*}
Z_{H}(B)=\{(u_1,u_2)\in\U(\H)\times \U(\H):& 
(\forall j \in I)\ u_1(e_j)\in \T e_j, u_2(e_j)\in \T e_j\}. 
\end{align*}
To prove that the action of $K=H=\U(\H_1)\times \U(\H_2)$ on $D$ is $(B,\sigma^M)$-weakly-visible it remains to show that 
$$\overline{\U(\H)^2.\exp(\tau(\cA_{\R})).\U(\H)^2}^\mathrm{o}\neq \emptyset.$$
This follows from an argument as in the last part of Example \ref{ncss}. The action is also compatible with the automorphism of $\U(\H_1,\H_2)$ given by $\sigma(u)=J_{\K}uJ_{\K}$.
}
\end{ex}

\begin{ex}\label{pcssr}
\rm{
Let $\K$ be a complex Hilbert space which is a direct sum $\K=\H_1\oplus\H_2$.
\[ B_{\res}(\K):=
\Big\{\pmat{a & b \\ c & d} \in B(\K):c\in B_2(\H_1,\H_2),b\in B_2(\H_2,\H_1) \Big\}\] 
is a complex Banach-$*$-algebra. Its unit group is 
$$\GL_{\res}(\K)=\GL(\K)\cap B_{\res}(\K).$$
The restricted unitary group is 
$$\U_{\res}(\K)=\U(\K)\cap \GL_{\res}(\K).$$
The homogeneous space $\Gr_{\res}:=\U_{\res}(\K)/(\U(\H_1)\times\U(\H_2))$ is the restricted Gra\ss{}mannian. There is an isomorphism
$$\U_2(\K)/(\U_2(\H_1)\times\U_2(\H_2))\simeq \U_{\res}(\K)_0/(\U(\H_1)\times\U(\H_2)),$$
where $\U_{\res}(\K)_0$ is the connected component of $\U_{\res}(\K)$ given by operator 
$(2\times 2)$-block matrices in $\U_{\res}(\K)$ with diagonal operators with Fredholm index equal to zero.

As in the previous example we assume that $\H_1 := \H \oplus \{0\}$ and 
$\H_2 := \{0\} \oplus \H$ for a Hilbert space $\H$. Let 
\[\tau: B_2(\H)\to \g,\quad A\mapsto  
\pmat{0 & A \\ -A^* & 0}.\] 
Consider the diagonal real subalgebra of $B_2(\H_1)$ given by
\[ \cA_{\R}=\{f\in B_2(\H): f(e_i)\in\R e_i\quad\mbox{for}\quad i\in I\},\] 
and let $B:=\exp(\tau(\cA_{\R}))$, where $\exp:\g\to G$ is the exponential map of $\U_2(\H)$. Observe that $B\subseteq G^{\sigma}$ and that   
\begin{align*}
Z_{H}(B)=\{(u,u)\in\U_2(\H)^2:& u(e_i)\in \T 
 e_i \quad \mbox{for}\quad i\in I\}.
\end{align*}
To prove that the action of $K=H=\U_2(\H_1)\times \U_2(\H_2)$ on the restricted Gassmannian is $(B,\sigma^M)$-weakly-visible it remains to show that 
$$\overline{\U_2(\H)^2.\exp(\tau(\cA_{\R})).\U_2(\H)^2}^\mathrm{o}\neq \emptyset.$$ 
Observe that 
\[ HBH=\U_2(\H)^2\exp(\tau(\U_2(\H)\cA_{\R}\U_2(\H)^2))
\U_2(\H)^2.\] 
A finite dimensional approximation argument leads to 
\[ \overline{\U_2(\H)\cA_{\R}\U_2(\H)}=B_2(\H),\] 
and the result follows.
}
\end{ex}

\begin{ex}\label{ncssr}
\rm{
The restricted pseudo-unitary group is 
$$G=\U_{\res}(\H_1,\H_2)=\U(\H_1,\H_2)\cap\GL_{\res}(\H).$$
It acts transitively on 
$$\D=\{z\in B_2(\H_2,\H_1):\|z\|<1\}$$
by fractional linear transformations. The stabilizer of $0$ in $G$ is $H=\U(\H_1)\times\U(\H_2)$.

We can adapt the previous example to this restricted group context as we did with both examples of positively curved symmetric spaces.
}
\end{ex}

\subsection{Approximate triunity}

In \cite{kob04} three multiplicity-free results are shown to stem from a single geometry. The basic result is the following, which we state in a form suited to our infinite dimensional context. 

\begin{lem}\label{approxtriun}
If $G$ is a topological group with subgroups $K$, $B$ and $H$, then the following conditions are equivalent:
\begin{enumerate}
\item[\rm(1)] $KBH$ is dense in $G$.
\item[\rm(2)]  $HBK$ is dense in $G$.
\item[\rm(3)]  $\diag(G)(B\times B)(K\times H)$ is dense in $G\times G$.
\end{enumerate}
\end{lem}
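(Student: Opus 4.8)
The plan is to prove the chain of equivalences by exploiting the symmetry of the statement together with a standard translation between density of products of subgroups in $G$ and density of one orbit closure in a product space. First I would observe that $(1)\Leftrightarrow(2)$ is essentially formal: taking inverses is a homeomorphism of $G$, and $(KBH)^{-1}=H^{-1}B^{-1}K^{-1}=HBK$ since $K$, $B$, $H$ are subgroups; hence $KBH$ is dense if and only if its image under $g\mapsto g^{-1}$, namely $HBK$, is dense. So the real content is the equivalence of $(1)$ (equivalently $(2)$) with $(3)$.

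For $(1)\Leftrightarrow(3)$ the key is to rewrite condition $(3)$ as the density of a single orbit. Consider the action of $G\times G$ on itself by $(g_1,g_2).(x,y):=(g_1 x, g_2 y)$ restricted appropriately; more precisely, the set $\diag(G)(B\times B)(K\times H)$ consists of all pairs $(g b_1 k,\, g b_2 h)$ with $g\in G$, $b_1,b_2\in B$, $k\in K$, $h\in H$. I would map $G\times G$ to $G$ by the ``quotient'' map $\mu(x,y):=x^{-1}y$, which is continuous, open, and surjective, and whose fibers are exactly the orbits of the left-diagonal action $g.(x,y)=(gx,gy)$. Under $\mu$, a pair $(gb_1k, gb_2h)$ maps to $(gb_1k)^{-1}(gb_2h)=k^{-1}b_1^{-1}b_2 h\in K B H$ (using that $b_1^{-1}b_2\in B$ since $B$ is a subgroup). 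Conversely every element of $KBH$ arises this way. Thus $\mu\big(\diag(G)(B\times B)(K\times H)\big)=KBH$, and since the set $\diag(G)(B\times B)(K\times H)$ is exactly $\mu^{-1}(KBH)$ — it is saturated for the diagonal action because left-multiplying $(x,y)$ by $g'\in G$ sends $(gb_1k, gb_2h)$ to $(g'gb_1k, g'gb_2h)$, which is of the same form — I can transfer density across $\mu$: an open surjection $\mu$ carries dense saturated sets to dense sets and pulls dense sets back to dense saturated sets. Hence $KBH$ is dense in $G$ if and only if $\mu^{-1}(KBH)=\diag(G)(B\times B)(K\times H)$ is dense in $G\times G$, which is precisely $(3)$.

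Assembling: $(1)\Leftrightarrow(3)$ by the orbit-map argument, and $(1)\Leftrightarrow(2)$ by the inversion homeomorphism, giving all three equivalences. The main obstacle I anticipate is the topological bookkeeping around the map $\mu(x,y)=x^{-1}y$: one must check carefully that $\mu$ is open (it is, since $(x,y)\mapsto(x, x^{-1}y)$ is a homeomorphism of $G\times G$ and projection is open), and that $\diag(G)(B\times B)(K\times H)$ is genuinely the full $\mu$-preimage of $KBH$ rather than merely mapping into it — the latter is where one uses that $B$ is a subgroup, so that $b_1^{-1}b_2$ ranges over all of $B$ as $b_1,b_2$ do. Once these two points are in place, the density transfer is automatic, and no separate argument is needed for $(2)\Leftrightarrow(3)$ since it follows by composing the two established equivalences.
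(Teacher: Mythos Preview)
Your proof is correct and uses essentially the same device as the paper: the map $\mu(x,y)=x^{-1}y$ from $G\times G$ onto $G$, whose fibers are the left $\diag(G)$-orbits, together with the trivial inversion argument for $(1)\Leftrightarrow(2)$. The only difference is organizational: for $(1)\Rightarrow(3)$ the paper writes down explicit factorizations such as $(hbk,e)=(h,h)(b,e)(k,h^{-1})$ to place $\overline{HBK}\times\{e\}$ and $\{e\}\times\overline{KBH}$ inside the closure (and then implicitly uses $\diag(G)$-invariance of the closure to fill out $G\times G$), whereas you observe more directly that $\diag(G)(B\times B)(K\times H)=\mu^{-1}(KBH)$ is a saturated preimage under an open surjection, so density transfers automatically in both directions.
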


\begin{proof}
The equivalence of (1) and (2) is trivial. If (3) holds, 
then the image $KBH$ of $\diag(G)(B\times B)(K\times H)$ 
under the continuous map $\tau \: G \times G \to G, 
\tau(g,h) := g^{-1}h$ is dense in $G$, which is (1).

Now we assume that (1) and (2) hold. From 
$(hbk,e)=(h,h)(b,e)(k,h^{-1})$ 
we conclude that 
\[ \overline{HBK}\times\{e\}\subseteq \overline{\diag(G)(B\times B)(K\times H)}.\] 
From a similar equation we conclude that
\[ \{e\} \times\overline{KBH}\subseteq\overline{\diag(G)(B\times B)(K\times H)},\] 
so that (1) and (2) imply (3).
\end{proof}

The next theorem was proved in \cite{kob04}*{Thm. 3.1}: 

\begin{thm} \label{thm:6.24}
Let $n_1+n_2+n_3=p+q=n$, and consider the naturally embedded groups $K:=\U(n_1)\times\U(n_2)\times\U(n_3)$ and $H:=\U(p)\times\U(q)$ in $G:=\U(n)$. We define an automorphism $\sigma$ of $G$ by $\sigma(g)=\overline{g}$ and let $B:=G^{\sigma}=\Ort(n)$. Then 
$G=KBH$ is equivalent to $\min(p,q)\leq 2$ or $\min(n_1,n_2,n_3)\leq 1$. 
\end{thm}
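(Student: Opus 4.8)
The plan is to reduce the statement $G=KBH$ for $B=\mathrm{O}(n)$ to a combinatorial/geometric condition on pairs of flags, and then to match the two sides of the claimed equivalence by a case analysis. First I would reformulate the identity $G=KBH$. Writing an element of $G/H \cong \Gr_{p,q}(\C^n)$ as a pair of orthogonal subspaces $(W_1,W_2)$ of dimensions $(p,q)$, and using that $K=\mathrm{U}(n_1)\times\mathrm{U}(n_2)\times\mathrm{U}(n_3)$ is the stabilizer of a fixed orthogonal decomposition $\C^n = E_1\oplus E_2\oplus E_3$, the statement $G=KBH$ becomes: every $\mathrm{U}(n)$-orbit on $G/H$ meets $B\cdot(H/H)$, i.e. every pair of subspaces $(W_1,W_2)$ can be moved by $K$ so that $W_1,W_2$ are defined over $\R$ with respect to a fixed real structure compatible with the decomposition $E_1\oplus E_2\oplus E_3$. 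Equivalently, using the Cartan-type decomposition on the double coset space $K\backslash G/H$, one must show that every $(K,H)$-double coset contains an element of $\mathrm{O}(n)$. The relevant invariants of a $(K,H)$-double coset are the ``principal angles'' between the flag $(E_1,E_2,E_3)$ and the flag $(W_1,W_2)$, organized as a collection of singular values.

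Second, I would invoke the known classification of these double cosets. The double coset space $K\backslash \mathrm{U}(n)/H$ for $K$ a product of three unitary groups and $H$ a product of two is finite exactly under the stated numerical constraints $\min(p,q)\le 2$ or $\min(n_1,n_2,n_3)\le 1$; this is essentially the content of \cite{kob04}*{Thm. 3.1}, and since we are allowed to assume results stated earlier, I would cite it directly for the structural input and concentrate on verifying that finiteness of the double coset space is what forces every double coset to contain a real point. The key point is that $\mathrm{O}(n) = \mathrm{U}(n)^\sigma$ meets the identity double coset, and the orbit of $\mathrm{O}(n)$ under the $K\times H$-action on $\mathrm{U}(n)$ is a union of finitely many double cosets; when the total number of double cosets is finite and the geometry is ``spherical'' in the appropriate sense, dimension count shows $\mathrm{O}(n)\cdot(K\times H)$ is open, hence (being also closed by compactness) all of $G$.

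Third, I would run the two directions of the equivalence. For the forward direction, assuming $G=KBH$: if $\min(p,q)\ge 3$ and $\min(n_1,n_2,n_3)\ge 2$ simultaneously, one exhibits a pair of subspaces $(W_1,W_2)$ whose configuration relative to $(E_1,E_2,E_3)$ has a continuous modulus that cannot be killed by the action of $K$ together with passage to $\mathrm{O}(n)$ — concretely, generic principal angles between a $3$-dimensional subspace and each $E_i$ of dimension $\ge 2$ give a positive-dimensional family of double cosets, contradicting $G=KBH$. For the reverse direction, assuming $\min(p,q)\le 2$ or $\min(n_1,n_2,n_3)\le 1$, I would check case by case (e.g. $q=0$ or $q=1$ or $q=2$; and symmetrically $n_3\le 1$) that the relevant angle invariants take only finitely many values and that in each case an explicit real representative can be produced by a direct matrix normal form argument, using the approximate/exact Cartan decomposition of Example~\ref{cartanunif} type in the finite-dimensional setting (here it is exact since everything is compact). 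The main obstacle is the forward direction: producing, for $\min(p,q)\ge 3$ and $\min(n_1,n_2,n_3)\ge 2$, an explicit obstruction showing $KBH\ne G$, since this requires pinning down the precise moduli of the triple-flag configuration rather than merely counting — this is where I expect the real work to lie, and where one genuinely needs the computation of \cite{kob04}*{Thm. 3.1} rather than a soft argument.
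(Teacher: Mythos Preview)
The paper does not prove Theorem~\ref{thm:6.24}. The sentence immediately preceding the statement says ``The next theorem was proved in \cite{kob04}*{Thm.~3.1}'', and no argument is given; the result is quoted from Kobayashi's paper and used only as background for the infinite-dimensional analogue in Theorem~\ref{triun}. So there is no proof here against which to compare your proposal.

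That said, your proposal has a genuine circularity. You write that you would ``cite \cite{kob04}*{Thm.~3.1} directly for the structural input'' about the classification of $(K,H)$-double cosets. But \cite{kob04}*{Thm.~3.1} \emph{is} the statement $G=KBH \Leftrightarrow \min(p,q)\le 2$ or $\min(n_1,n_2,n_3)\le 1$ --- it is precisely the theorem you are asked to prove, not an auxiliary classification result you can invoke. Your plan therefore assumes its own conclusion.

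Independently of the circularity, the bridge you propose between ``finitely many $(K,H)$-double cosets'' and ``$G=KBH$'' is not sound as stated. Finiteness of $K\backslash G/H$ is a sphericity condition and does not by itself force every double coset to contain an $\mathrm{O}(n)$-point; your sentence ``dimension count shows $\mathrm{O}(n)\cdot(K\times H)$ is open, hence (being also closed by compactness) all of $G$'' is not an argument --- $KBH$ is automatically closed because $K\times B\times H$ is compact, so the entire content lies in proving openness, and that requires an explicit normal-form analysis at generic points in each of the allowed cases, not a count of double cosets. Kobayashi's actual proof in \cite{kob04} proceeds by such a case-by-case matrix reduction, and the forward direction (producing an obstruction when both numerical conditions fail) is done by an explicit construction, much as you anticipate in your last paragraph. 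If you want a self-contained argument, that explicit case analysis is what you have to carry out; the soft part of your outline does not substitute for it.
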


Note that $BH/H\simeq \Gr_p(\R^n)$ is the real Gra\ss{}mannian of $p$ dimensional subspaces in $\R^n$ and that $BK/K\simeq \B_{n_1,n_1+n_2}(\R^n)$ is the real flag manifold of pairs of subspaces $(F_1,F_2)$ of dimension $n_1$ and $n_1+n_2$ respectively such that $F_1\subseteq F_2$. Using a finite dimensional approximation argument we can prove a version of Theorem \ref{thm:6.24}
 in the case of groups of operators which are Hilbert-Schmidt plus identity.

\begin{thm}\label{triun}
Let $I_1\cup I_2\cup I_3=J_1\cup J_2=I$ be partitions of a countable infinite index set $I$, let $(e_i)_{i\in I}$ be an orthonormal basis of a Hilbert space $\H$ and denote $\H_J:=\spanh\{e_j\}_{j\in J}$ for $J\subseteq I$.  Consider the naturally embedded groups $K:=\U_2(\H_{I_1})\times\U_2(\H_{I_2})\times\U_2(\H_{I_3})$ and $H:=\U_2(\H_{J_1})\times\U_2(\H_{J_2})$ in $G:=\U_2(\H)$. We define an automorphism $\sigma$ of $G$ by $\sigma(u)=\overline{u}=JuJ$, where $J$ is the canonical complex conjugation on $\H=\ell^2(I)$,  and let $B:=G^{\sigma}\cong\Ort_2(\H)$. Then $KBH$ is dense in $G$ if and only if $\min(|J_1|,|J_2|)\leq 2$ or $\min(|I_1|,|I_2|,|I_3|)\leq 1$, where $|I|$ denotes the cardinality of a set $I$. 
\end{thm}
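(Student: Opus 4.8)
The plan is to treat the two implications separately. The sufficiency direction — that the cardinality bound forces $\overline{KBH}=G$ — should follow from Kobayashi's Theorem~\ref{thm:6.24} by a finite-dimensional approximation, while the converse is the substantive part.

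For sufficiency I would first record that $G=\U_2(\H)$ is the $\|\cdot\|_2$-closure of $\bigcup_F\U(\H_F)$ and $B=\Ort_2(\H)$ the closure of $\bigcup_F\Ort(\H_F)$, where $F$ ranges over the finite subsets of $I$ and $\U(\H_F),\Ort(\H_F)\subseteq G$ are identified with the unitary, resp.\ real-orthogonal, operators acting as the identity on $\H_{I\setminus F}$. Given $u\in\U_2(\H)$ and $\eps>0$, I pick $\tilde u\in\U(\H_{F_0})$ with $\|u-\tilde u\|_2<\eps$ and enlarge $F_0$ to a finite $F\supseteq F_0$ so that the triple $(|F\cap I_1|,|F\cap I_2|,|F\cap I_3|)$ and the pair $(|F\cap J_1|,|F\cap J_2|)$ still satisfy Kobayashi's numerical condition: if $\min(|I_1|,|I_2|,|I_3|)\le1$ this holds for every $F$, and if $\min(|J_1|,|J_2|)\le2$, say $|J_1|\le2$, I additionally require $J_1\subseteq F$, so that $|F\cap J_1|=|J_1|\le2$. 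Writing $\H_F=\bigoplus_k\H_{F\cap I_k}=\bigoplus_\ell\H_{F\cap J_\ell}$ and using $\Ort(\H_F)=\U(\H_F)^{\sigma}$, Theorem~\ref{thm:6.24} gives $\tilde u=k_Fb_Fh_F$ with $k_F\in\prod_k\U(\H_{F\cap I_k})\subseteq K$, $b_F\in\Ort(\H_F)\subseteq B$, $h_F\in\prod_\ell\U(\H_{F\cap J_\ell})\subseteq H$; hence $\tilde u\in KBH$ and $u\in\overline{KBH}$.

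For necessity I would argue by contraposition: assuming $\min(|J_1|,|J_2|)\ge3$ and $\min(|I_1|,|I_2|,|I_3|)\ge2$, I want to exhibit a nonempty open subset of $\U_2(\H)$ disjoint from $\overline{KBH}$. The idea is to attach to $u\in\U_2(\H)$ the closed subspace $E:=u\H_{J_1}$ together with the three positive operators $A_k:=P_E\,P_{\H_{I_k}}\,P_E$, which are supported on $E$, satisfy $A_1+A_2+A_3=P_E$ and, because $u\in\bone+B_2(\H)$, differ from the coordinate projections $P_{\H_{J_1\cap I_k}}$ by Hilbert--Schmidt operators, so that the ``active part'' of the triple lives on an essentially finite-dimensional subspace. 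A left translation by $k\in K$ conjugates $(A_1,A_2,A_3)$ by the unitary $k|_E$ and a right translation by $h\in H$ leaves $E$, hence the triple, unchanged; so any unitary invariant of $(A_1,A_2,A_3)$ descends to a function $f$ on $\U_2(\H)$ which is left-$K$- and right-$H$-invariant. When $E$ is real — in particular when $u\in B$, so $E$ is $J$-invariant and $P_E$ commutes with $J$ — all three $A_k$ commute with the conjugation $J|_E$, i.e.\ the triple admits a common conjugation; I would take for $f$ a quantitative invariant measuring the obstruction to the existence of such a common conjugation on the active part, so that $f$ vanishes on $B$, hence on $KBH$ and on $\overline{KBH}$. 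Finally, on a finite coordinate block $F$ chosen with $|F\cap J_\ell|\ge3$ and $|F\cap I_k|\ge2$, the ``only if'' half of Theorem~\ref{thm:6.24} — together with the fact that in finite dimensions $KBH$ is the image of a compact set, hence closed, so that its complement is open — produces an open set of configurations on which this obstruction does not vanish, and these yield a nonempty open subset of $\U_2(\H)$ on which $f\ne0$.

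I expect this last step to be the main obstacle: one must pin down an invariant $f$ that is simultaneously (i) genuinely continuous on the Hilbert--Schmidt group — the subspace $E=u\H_{J_1}$ and the operators $A_k$ depend continuously on $u$, but the spectral data of the $A_k$ near $0$ and $1$ requires care — and (ii) a faithful obstruction, vanishing exactly on those $K$-orbits in $G/H$ that meet the real locus $BH/H$. An alternative route I would also pursue is to bypass the explicit invariant and show directly that $\overline{KBH}\cap\U(\H_F)=K_FB_FH_F$ for a cofinal family of finite $F$ compatible with both partitions — equivalently, that a convergent sequence $k_nb_nh_n\to g\in\U(\H_F)$ forces a factorization of $g$ inside $\U(\H_F)$, after compressing to $\H_F$ and controlling the Hilbert--Schmidt tails of the $k_n,b_n,h_n$. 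Either way, the crux is to show that the extra room available in the infinite-dimensional factors $\U_2(\H_{I_k})$, $\Ort_2(\H)$ and $\U_2(\H_{J_\ell})$ does not defeat Kobayashi's finite-dimensional obstruction.
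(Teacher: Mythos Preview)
The paper does not actually prove Theorem~\ref{triun}: it states the result and offers only the one-line hint ``using a finite dimensional approximation argument we can prove a version of Theorem~\ref{thm:6.24}'' in the sentence preceding the statement. There is no \texttt{proof} environment following it.

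Your sufficiency argument is correct and is precisely the finite-dimensional approximation the paper has in mind: approximate $u\in\U_2(\H)$ in the $\|\cdot\|_2$-norm by an element of $\U(\H_F)$ for a finite $F$ chosen compatibly with both partitions so that Kobayashi's numerical condition persists, then invoke Theorem~\ref{thm:6.24} on $\H_F$. This part is complete.

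For the necessity direction you go well beyond what the paper supplies, and you correctly identify that this is where the real work lies. Your two proposed routes are both reasonable, but neither is finished, as you yourself say. The invariant-based approach needs a concrete choice of $f$ and a proof of its Hilbert--Schmidt continuity; the compression approach needs the lemma $\overline{KBH}\cap\U(\H_F)=K_FB_FH_F$, and the obstacle you flag --- that the infinite-dimensional tails of $k_n,b_n,h_n$ might conspire to produce limits in $\U(\H_F)$ outside $K_FB_FH_F$ --- is genuine and not obviously surmountable without further input. Since the paper gives no argument for this direction either, there is nothing to compare against; your proposal is a plausible plan of attack with an honestly acknowledged gap, which is more than the paper itself provides.
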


\begin{ex}{\rm 
For a Hilbert space $\H$, we write $\P(\H)$ for its projective space, i.e., the set of all $1$-dimensional subspaces of~$\H$. The standard action of $\exp(i\ell^2(\N,\R))$ on $\P(\ell^2(\N,\C))$ is weakly visible if we take $S=\P(\ell^2(\N,\R))$ and the canonical complex conjugation on $\ell^2(\N,\C)$. This follows from the density of $KBH$ in $G$, whith $G=\U_2(\ell^2(\N,\C))$, $H=\U(1)\times\U_2(\ell^2(\N_{>1},\C))$, $K=\exp(i\ell^2(\N,\R))
\subseteq \U_2(\ell^2(\N,\C))$ and $B=G^{\sigma}$. Here $\sigma(u)=\overline{u}=JuJ$, where $J$ is the canonical complex conjugation on $\ell^2(\N,\C)$.
}\end{ex}

Lemma \ref{approxtriun} can be used to prove multiplicity-free branching rules for representations realized on spaces of holomorphic sections of line bundles over flag manifolds as in Section~VII of \cite{neeb04} or \cite{neeb12}.   Lemma~\ref{approxtriun}(3) 
can be used to prove the multiplicity-free property of tensor product representations by taking the diagonal action of a group $G$ on a product of line bundles over spaces $G/H$ and $G/K$ as in Example 2.4 of \cite{kob04}.



\section*{Acknowledgments}

K.-H.~Neeb acknowledges support by DFG-grant NE 413/9-1. 
M. Miglioli acknowledges support by a  CONICET postdoctoral fellowship and a DAAD grant for short term visit, and is grateful for 
the excellent working conditions provided by the FAU Erlangen-N\"urnberg.





\noindent
\end{document}